\documentclass[11pt]{amsart}

\newlength{\myhmargin} \setlength{\myhmargin}{1in} \addtolength{\myhmargin}{18pt}
\usepackage[textheight=574pt, textwidth=445pt, marginparwidth=50pt, centering]{geometry}

 \setlength{\parskip}{3pt}
\usepackage{amsmath,amssymb,amsthm,amsfonts,amscd,flafter,
graphicx,verbatim,pinlabel,mathrsfs}
\usepackage[all]{xy}
\usepackage{epstopdf}
\epstopdfsetup{suffix=}
\usepackage[colorlinks=false]{hyperref}
\usepackage[all]{hypcap}
\AtBeginDocument{\addtocontents{toc}{\protect\setlength{\parskip}{0pt}}}

\title{Invariants of Legendrian and transverse knots in monopole knot homology}

\author[John A. Baldwin]{John A. Baldwin}
\address{Department of Mathematics \\ Boston College}
\email{john.baldwin@bc.edu}

\author[Steven Sivek]{Steven Sivek}
\address{Department of Mathematics \\ Princeton University}
\email{ssivek@math.princeton.edu}

\thanks{JAB was partially supported by NSF grant DMS-1104688.  SS was partially supported by NSF postdoctoral fellowship DMS-1204387.}

\def\R{{\mathbb{R}}}
\def\C{{\mathbb{C}}}

\newcommand\zz{\mathbb{Z}}

\newcommand\Sc{\text{Spin}^c}
\newcommand\spc{\mathfrak{s}}

\newcommand\ssm{\smallsetminus}

\newcommand\Psit{\underline{\Psi}}
\newcommand\Z{\mathbb{Z}}

\newcommand\inr{{\rm int}}
\newcommand\RR{\mathcal{R}}

\newcommand\data{\mathscr{D}}
\newcommand\SFH{SFH}
\newcommand\SHM{SHM}
\newcommand\SHMt{\underline{\SHM}}
\newcommand\KHM{KHM}
\newcommand\KHMt{\underline{\KHM}}

\newcommand{\definefunctor}[1]{\textbf{\textup{#1}}}

\newcommand\SHMtfun{\definefunctor{\SHMt}}

\newcommand\KHMtfun{\definefunctor{\KHMt}}

\newcommand\Img{{\rm Im}}
\newcommand\invt{\psi} 
\newcommand\kinvt{\mathcal{L}} 
\newcommand\tinvt{\mathcal{T}} 
\newcommand\HMtoc{\HMto_{\bullet}}

\newcommand\PSys{\textbf{\textup{PSys}}}
\newcommand{\RPSys}[1][\RR]{{#1}\mbox{-}\PSys}
\newcommand\DiffSut{\textup{\textbf{DiffSut}}}

\newcommand{\longcomment}[2]{#2}

\DeclareFontFamily{U}{mathx}{\hyphenchar\font45}
\DeclareFontShape{U}{mathx}{m}{n}{
      <5> <6> <7> <8> <9> <10>
      <10.95> <12> <14.4> <17.28> <20.74> <24.88>
      mathx10
      }{}
\DeclareSymbolFont{mathx}{U}{mathx}{m}{n}
\DeclareFontSubstitution{U}{mathx}{m}{n}
\DeclareMathAccent{\widecheck}{0}{mathx}{"71}
\newcommand{\HMto}{\widecheck{\mathit{HM}}}

\longcomment{
    \RequirePackage{rotating}                   
    \def\HMto{%
       \setbox0=\hbox{$\widehat{\mathit{HM}}$}
       \setbox1=\hbox{$\mathit{HM}$}
       \dimen0=1.1\ht0
       \advance\dimen0 by 1.17\ht1
       \smash{\mskip2mu\raise\dimen0\rlap{%
          \begin{turn}{180}
              {$\widehat{\phantom{\mathit{HM}}}$}
           \end{turn}} \mskip-2mu    
                \mathit{HM}
    }{\vphantom{\widehat{\mathit{HM}}}}{}}
}
    
    \newcommand*\oline[1]{%
  \vbox{%
    \hrule height 0.35pt
    \kern0.1ex
    \hbox{%
      \kern-0.0em
      \ifmmode#1\else\ensuremath{#1}\fi
      \kern-0.1em
    }
  }
}

\newtheorem{theorem}{Theorem}[section]
\newtheorem{lemma}[theorem]{Lemma}

\newtheorem{conjecture}[theorem]{Conjecture}
\newtheorem{corollary}[theorem]{Corollary}
\newtheorem{proposition}[theorem]{Proposition}

\theoremstyle{definition}
\newtheorem{definition}[theorem]{Definition}
\newtheorem{notation}[theorem]{Notation}

\newtheorem{remark}[theorem]{Remark}

\newtheorem{example}[theorem]{Example}

\makeatletter
\newtheorem*{rep@thm}{\rep@title}
\newcommand{\newreptheorem}[2]{%
\newenvironment{rep#1}[1][0,0]{%
\def\rep@title{#2##1}%
\begin{rep@thm}}%
{\end{rep@thm}}}
\makeatother
\newreptheorem{theorem}{}

\begin{document}
\begin{abstract} 
We use the contact invariant defined in  \cite{bsSHM} to construct a new invariant of Legendrian knots in Kronheimer and Mrowka's monopole knot homology theory ($\KHM$), following a prescription of Stipsicz and V{\'e}rtesi. Our Legendrian invariant improves upon an earlier Legendrian invariant in $\KHM$ defined by the second author in several important respects. Most notably, ours is preserved by negative stabilization. This fact   enables us to define a transverse knot invariant in $\KHM$ via Legendrian approximation. It also  makes our invariant a more likely candidate for the monopole Floer analogue of the ``LOSS" invariant in knot Floer homology. Like its predecessor, our Legendrian invariant behaves functorially with respect to Lagrangian concordance. We show how this fact can be used  to compute our invariant in several  examples.
As a byproduct of our investigations, we provide the first infinite family of nonreversible Lagrangian concordances between prime knots.


 \end{abstract}

\maketitle

\section{Introduction}
\label{sec:intro}
A basic goal in contact geometry is to construct invariants that can distinguish Legendrian or transverse knots in a contact 3-manifold which are smoothly isotopic and have the same classical invariants but are not Legendrian or transversely isotopic. Such an invariant is said to be \emph{effective}. Effective Legendrian invariants include Chekanov and Eliashberg's Legendrian contact homology ($LCH$) \cite{yasha8,ehk} and the ``LOSS" and ``GRID" invariants in knot Floer homology defined by Lisca, Ozsv{\'a}th, Stipsicz, and Szab{\'o} \cite{lossz} and Ozsv{\'a}th, Szab{\'o}, and Thurston \cite{oszt}, respectively. Effective transverse invariants are  harder to come by; the only known examples are those arising from the LOSS and GRID invariants via Legendrian approximation and the transverse knot contact homology theory developed by Ng et al. \cite{eens, ngtransverse}.

In this paper, we define a new invariant of   Legendrian knots using Kronheimer and Mrowka's monopole knot homology theory ($KHM$). This theory is the monopole Floer analogue of knot Floer homology and we expect that our invariant is the corresponding   analogue of the LOSS invariant in a sense made precise later. 
It bears mentioning that ours is the second Legendrian invariant defined in $\KHM$; the first was defined by the second author in \cite{sivek}. However, our construction is substantially different and  improves upon this earlier invariant in several important respects. Most notably, our invariant is preserved by negative stabilization and thus gives rise to a new   transverse invariant  via Legendrian approximation. 

Like the second author's invariant, ours behaves functorially with respect to Lagrangian concordance, something which is not known to be true of the LOSS invariant. If the LOSS invariant \emph{were} functorial in this way, then the equivalence  \cite{bvv} between the LOSS and GRID invariants  would provide easily computable obstructions to the existence of Lagrangian concordances between Legendrian knots in the tight contact structure on $S^3$. Indeed, one of our future goals is to use the construction in this paper to prove this functoriality for the LOSS invariant, as explained in more detail at the end of this introduction.

Below, we outline the constructions of our Legendrian and transverse invariants  and describe some of their properties, elaborating on several points in the discussion above. We also describe how our investigations led us to an infinite family of nonreversible Lagrangian concordances between prime knots. At the end, we discuss plans for future work.

\subsection{The contact invariant in sutured monopole homology} Our Legendrian and transverse invariants are defined in terms of the contact invariant in sutured monopole homology ($SHM$) constructed in \cite{bsSHM}. We  therefore recall this construction briefly  below.

The \emph{sutured monopole homology}  of a balanced sutured manifold $(M,\Gamma)$, as defined by Kronheimer and Mrowka in \cite{km4},  is an isomorphism class of $\RR$-modules, denoted by $\SHMt(M,\Gamma)$, where $\RR$ is the Novikov ring with integer coefficients. It is defined in terms of the monopole Floer homology of a \emph{closure} of $(M,\Gamma)$, which is a pair $(Y,R)$, where $Y$ is a certain closed 3-manifold containing $M$ and  $R$ is a distinguished surface in $Y$. In \cite{bs3}, we introduced canonical isomorphisms, well-defined up to multiplication by units in $\RR$, relating the Floer  homology groups associated to different closures of $(M,\Gamma)$. These  Floer  groups and isomorphisms make up  what we call a \emph{projectively transitive systems of $\RR$-modules}, denoted by $\SHMtfun(M,\Gamma)$. This is  a more subtle   invariant of $(M,\Gamma)$ than the isomorphism class $\SHMt(M,\Gamma)$. In particular, it makes sense to talk about elements of and morphisms between projectively transitive systems, whereas these notions are much less interesting  for isomorphism classes of $\RR$-modules.

Suppose   $\xi$ is a contact structure on $M$ such that $\partial M$ is convex with dividing set $\Gamma$. In \cite{bsSHM}, we introduced the notion of a \emph{contact closure} of the \emph{sutured contact manifold} $(M,\Gamma,\xi)$. Roughly, this is a triple $(Y,R,\bar\xi)$, where $(Y,R)$ is a closure of $(M,\Gamma)$ and $\bar\xi$ is a certain contact structure on $Y$ extending $\xi$. For each such contact closure, the contact invariant $\psi(Y,\bar\xi)$ in the monopole Floer homology of $-Y$, as defined by Kronheimer and Mrowka in \cite{km, kmosz},  determines an element of $\SHMtfun(-M,-\Gamma)$. We proved that for contact closures of sufficiently high genus (where the \emph{genus} of a closure refers to that of its distinguished surface $R$), the induced elements of $\SHMtfun(-M,-\Gamma)$   all agree---that is, they are independent of the closure. Our contact invariant is defined to be this common element, denoted by \[\invt(M,\Gamma,\xi)\in \SHMtfun(-M,-\Gamma).\]

\subsection{Legendrian and transverse invariants in monopole knot homology} 
\label{ssec:introleg} The \emph{monopole knot homology} of a knot $K$ in a closed 3-manifold $Y$, as defined by Kronheimer and Mrowka in \cite{km4}, is the  isomorphism class of $\RR$-modules \[\KHMt(Y,K):=\SHMt(Y\ssm \nu(K),m \cup -m),\] where $\nu(K)$ is a tubular neighborhood of the knot and $m$ is an oriented meridian on the boundary of this knot complement. In \cite{bs3}, we introduced a refinement of this invariant which assigns to a based knot $(K,p)$ in $ Y$ a projectively transitive system of $\RR$-modules,  denoted by $\KHMtfun(Y,K,p)$, which is similarly defined in terms of the systems $\SHMtfun(Y\ssm \nu(K),m \cup -m)$ associated to the various possible knot complements $(Y\ssm \nu(K),m \cup -m)$.

The Legendrian invariant defined in this paper assigns to a based, oriented Legendrian knot $(K,p)$ in $(Y,\xi)$ an element \[\kinvt(K)\in\KHMtfun(-Y,K,p).\] To define this element, we first  remove a standard neighborhood of $K$ and then glue on a piece called a \emph{bypass} (roughly, half of a thickened overtwisted disk) in such a way that the result is the complement of a tubular neighborhood of $K$ with dividing set  a pair of  oppositely oriented meridians. Suppose $(Y\ssm \nu(K),m\cup -m, \xi_K)$ is the sutured contact manifold formed in this way. The Legendrian invariant of $K$ is then  defined, very roughly speaking, by \[\kinvt(K):=\invt(Y\ssm \nu(K),m\cup -m, \xi_K)\in\KHMtfun(-Y,K,p).\] We prove that, up to isomorphism, this class is preserved by contactomorphism and Legendrian isotopy (see Proposition \ref{prop:Lfunct} and Corollary \ref{cor:Liso} for more precise statements). In particular, a  Legendrian isotopy $f_t$ sending $(K,p)$ to $(K',p')$ gives rise to a well-defined map \[\Psi_{f_t}:\KHMtfun(-Y,K,p)\to\KHMtfun(-Y,K',p')\] sending $\kinvt(K)$ to $\kinvt(K')$. In \cite{ost}, Ozsv{\'a}th and Stipsicz used a similar sort of naturality statement  about the LOSS invariant (which is true modulo incorporating the results of \cite{juhaszthurston}) to distinguish smoothly isotopic Legendrian knots with the same classical invariants.

Our construction is inspired by work of Stipsicz and V{\'e}rtesi \cite{sv} who proved that the LOSS invariant can be formulated in a similar way in terms of Honda, Kazez, and Mati{\'c}'s contact invariant in sutured (Heegaard) Floer homology \cite{hkm4}. We also use their work to prove that our invariant shares some important features with the LOSS invariant---most notably,  that $\kinvt$ is  preserved by negative Legendrian stabilization (Theorem \ref{thm:natstab}). This fact allows us to define an invariant of based, oriented transverse knots via Legendrian approximation as in \cite{lossz,ost}. Namely, given a transverse knot $(K,p) \subset (Y,\xi)$, we choose a Legendrian pushoff $(K',p')$ of $(K,p)$ in a standard neighborhood of $K$, and define the transverse invariant of $K$ to be \[\tinvt(K):=\kinvt(K')\in\KHMtfun(-Y,K,p).\footnote{After  identifying $\SHMtfun(-Y,K',p')$ with $\SHMtfun(-Y,K,p)$ in a canonical way.} \] The fact that any two such Legendrian pushoffs of $K$ are related by negative stabilization and Legendrian isotopy \cite{efm} implies that $\tinvt(K)$ is well-defined and is preserved, up to  natural isomorphism, by contactomorphism and  transverse isotopy (Proposition \ref{prop:Tfunct} and Corollary \ref{cor:Tiso}); in particular, transverse isotopies give rise to well-defined maps sending transverse invariant to transverse invariant as discussed above for the Legendrian invariant.

We expect the following  to be true.

\begin{conjecture}
\label{conj:effective}
$\kinvt$ and $\tinvt$ are \emph{effective} invariants of Legendrian and transverse knots.
\end{conjecture}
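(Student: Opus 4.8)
The plan is to reduce the conjecture to the known effectiveness of the LOSS and GRID invariants in knot Floer homology, via a precise identification of our invariants with theirs. Recall from the discussion above that $\kinvt(K)$ is assembled, following the Stipsicz--V\'ertesi prescription \cite{sv}, out of the $\SHM$ contact invariant $\invt$ of \cite{bsSHM}, while Stipsicz and V\'ertesi show that the LOSS invariant arises from the Honda--Kazez--Mati\'c contact class in sutured Heegaard Floer homology by the very same recipe. So it would suffice to produce an isomorphism between sutured monopole homology $\SHMtfun$ and sutured Heegaard Floer homology $\SFH$ --- extending the Kutluhan--Lee--Taubes / Colin--Ghiggini--Honda isomorphism $\HMto\cong\hf$ to the sutured (and based-knot) setting --- which is natural with respect to the gluing, bypass-attachment, and cobordism maps entering the constructions of \cite{bsSHM} and \cite{bs3}, and which carries $\invt$ to the Honda--Kazez--Mati\'c class. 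Granting this, $\kinvt$ is identified with the LOSS invariant and $\tinvt$ with its transverse analogue, whence effectiveness follows from the examples of Lisca--Ozsv\'ath--Stipsicz--Szab\'o \cite{lossz}, together with the equivalence \cite{bvv} of the LOSS and GRID invariants and the effective GRID computations of \cite{oszt}.

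The main obstacle is exactly the naturality of this monopole--Heegaard comparison. Although the underlying groups are known to agree, promoting the isomorphism to one of \emph{projectively transitive systems} $\SHMtfun\cong\KHMtfun$ --- compatible with the maps induced by sutured cobordisms and bypass attachments, and in particular with the gluing maps used to define $\invt$ and hence $\kinvt$ --- demands substantial new work. One would have to follow the contact class through each step of the constructions in \cite{bsSHM} and \cite{bs3} and match it with the corresponding step on the Heegaard side; because $\invt$ is defined analytically via the Kronheimer--Mrowka--Ozsv\'ath--Szab\'o contact element, there is no a priori reason it corresponds to the combinatorially defined Honda--Kazez--Mati\'c class under the existing equivalence, so establishing this correspondence is the crux.

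An alternative, more hands-on route bypasses the comparison altogether: exhibit directly a pair of Legendrian knots $K_0,K_1$ that are smoothly isotopic with equal Thurston--Bennequin and rotation numbers but satisfy $\kinvt(K_0)\neq\kinvt(K_1)$ in $\KHMtfun(-Y,K,p)$. The functoriality of $\kinvt$ under Lagrangian concordance, which we exploit elsewhere in this paper, furnishes a mechanism for such a calculation: if $K_0$ is loose then $\kinvt(K_0)$ vanishes, so it would be enough to prove a nonvanishing criterion in $\KHM$ --- for instance that $\kinvt(K_1)\neq 0$ whenever $K_1$ admits a Lagrangian concordance from a Legendrian knot with nonzero invariant --- and then produce such a $K_1$ in the smooth class of a loose knot with matching classical invariants. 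Establishing a clean nonvanishing theorem of this kind is the hard part of this approach, and we expect it to be the principal difficulty, just as the analogous nonvanishing statements were for the LOSS invariant.
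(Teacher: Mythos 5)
The statement you are addressing is stated in the paper as a \emph{conjecture}, and the paper offers no proof of it; what you have written is likewise not a proof but a research program, and you acknowledge as much at two separate points (``demands substantial new work,'' ``Establishing a clean nonvanishing theorem of this kind is the hard part''). So there is no question of comparing two proofs here: neither exists.

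That said, your first proposed route is exactly the strategy the authors themselves lay out in their ``Future work'' subsection: use the isomorphism $\SHMtfun(-M,-\Gamma)\cong\SFH(-M,-\Gamma)\otimes\RR$ coming from Kronheimer--Mrowka, Taubes, Colin--Ghiggini--Honda, and Lekili, show it carries $\invt$ to the Honda--Kazez--Mati\'c class, and then invoke the Stipsicz--V\'ertesi reformulation of the LOSS invariant to conclude $\kinvt$ is identified with LOSS, whence effectiveness follows from \cite{lossz}, \cite{oszt}, and \cite{bvv}. You correctly identify the crux --- promoting the group-level isomorphism to one of projectively transitive systems compatible with the gluing and bypass maps, and matching the analytically defined contact element with the combinatorially defined $EH$ class --- and this is precisely the part that is open. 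Your second, ``hands-on'' route (exhibit two Legendrian knots with equal classical invariants distinguished by $\kinvt$, e.g.\ a loose one with vanishing invariant against one with nonvanishing invariant certified by Lagrangian concordance from a knot with nonzero invariant) is a reasonable alternative not emphasized in the paper, and the tools for the nonvanishing half do exist in the paper (Theorem \ref{thm:lagconcordance}, Proposition \ref{prop:legunknotmap}, Corollary \ref{cor:kinvtslice}, Corollary \ref{cor:loose}); but you do not produce the required pair of knots, so this route also remains a gap rather than an argument. In short: your proposal is a faithful restatement of the open problem and of the authors' own intended attack on it, not a resolution of the conjecture.
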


As mentioned at the beginning of this introduction, ours is the second Legendrian invariant constructed using monopole knot homology. In \cite{sivek}, the second author defined an invariant  which associates to a Legendrian knot $K\subset (Y,\xi)$ a sequence of ``elements" \[\ell(K) = \{\ell_g(K)\in \KHMt(-Y,K)\}_{g=2}^{\infty}.\] Each $\ell_g(K)$ is defined in terms of Kronheimer and Mrowka's contact invariant for closed contact 3-manifolds,  after extending the contact structure on the complement of a standard neighborhood of $K$  to a  contact structure on a genus $g$ closure of the  knot complement with its meridional sutures. This invariant was also designed as a potential monopole Floer analogue of the LOSS invariant, and is also conjectured to be an effective invariant of Legendrian knots.

As mentioned above, our Legendrian invariant $\kinvt$ improves upon $\ell$ in a few important respects. First, while our invariant is also fundamentally a sequence $\{\kinvt_g\}$ of elements, one for each genus,  we are able to show that all of these $\kinvt_g$ are  equal for $g$ large enough, whereas the corresponding statement for the $\ell_g$ is only conjectured in \cite{sivek}. Second, the fact that our invariant is defined in terms of our ``natural" refinement of monopole knot homology makes it possible in principle to distinguish  the  invariants associated to two Legendrian representatives of the same based knot type, even when both are nonzero, something that is not possible for nonzero ``elements" of an isomorphism class of $\RR$-modules. Finally, as discussed above, we prove that $\kinvt$ is preserved by negative stabilization and can therefore be used to define a transverse invariant. By contrast, the second author shows in \cite{sivek} that $\ell$ is killed by a composition of positive and negative Legendrian stabilizations, but is not able to prove that it is preserved by negative stabilization. In fact, as defined, $\ell$ is an invariant of \emph{unoriented} Legendrian knots and so  \emph{cannot} be used to distinguish between positive and negative stabilization (or to define an transverse invariant).

One of the most interesting features of our Legendrian invariant  is that, like $\ell$,  it is functorial with respect to Lagrangian concordance in the following sense (Theorem \ref{thm:lagconcordance}).

\begin{theorem}
\label{thm:introlagconcordance}
Suppose $(K_-,p_-)$ and $(K_+,p_+)$ are Legendrian knots in $(Y,\xi)$ such that   $K_-$ is Lagrangian concordant to $K_+$ in the symplectization of $(Y,\xi)$. Then there exists a map \[\KHMtfun(-Y,K_+,p_+)\to\KHMtfun(-Y,K_-,p_-)\] which sends $\kinvt(K_+)$ to $\kinvt(K_-)$.
\end{theorem}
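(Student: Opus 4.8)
The plan is to realize a Lagrangian concordance $\Sigma$ from $K_-$ to $K_+$ inside the symplectization $(\R\times Y, d(e^t\alpha))$ as a Weinstein-type cobordism between the two sutured contact knot complements, and then to invoke the functoriality of the contact invariant $\invt$ under such cobordisms. First I would set up, for $K_\pm$, the sutured contact manifolds $(Y\ssm\nu(K_\pm),\, m\cup -m,\, \xi_{K_\pm})$ used in the definition of $\kinvt(K_\pm)$: remove a standard Legendrian neighborhood of each $K_\pm$ and glue on the bypass piece so the dividing set on the new torus boundary is a pair of oppositely oriented meridians. I then want to truncate $\Sigma$ to a compact concordance sitting in $[0,1]\times Y$ (after rescaling), with $K_-\subset\{0\}\times Y$ and $K_+\subset\{1\}\times Y$, and consider its complement in $[0,1]\times Y$; removing a tubular neighborhood of $\Sigma$ and of the arcs tracing out the basepoints produces a cobordism of pairs from $(Y\ssm\nu(K_-))$ to $(Y\ssm\nu(K_+))$. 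The key geometric input is that, because $\Sigma$ is Lagrangian, a neighborhood of $\Sigma$ looks like a piece of the cotangent bundle, so the complement carries a Weinstein (or at least Liouville/Stein) structure compatible with the contact structures $\xi_{K_-}$ and $\xi_{K_+}$ on its two ends—and the convexity/dividing-set data on the torus boundaries extends across. This is exactly the setup in which the second author's earlier work on $\ell$ and the gluing results of \cite{bsSHM,bs3} apply.

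Next I would pass to closures. The idea is to close up this cobordism of sutured contact manifolds to an honest cobordism $W$ between high-genus contact closures $(Y_-,R_-,\bar\xi_-)$ and $(Y_+,R_+,\bar\xi_+)$ of the two knot complements, chosen compatibly with the auxiliary gluing data, so that $W$ is a (weakly, or better, strongly) symplectically semi-fillable cobordism with convex boundary. Kronheimer--Mrowka's contact invariant is functorial under such cobordisms: the cobordism map $\HMto(-W)\colon \HMto(-Y_+)\to\HMto(-Y_-)$ (with the appropriate local-coefficient/Novikov-ring twisting) sends $\psi(Y_+,\bar\xi_+)$ to $\psi(Y_-,\bar\xi_-)$. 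Because the contact invariant $\invt$ in $\SHMtfun$ is, by definition, the common element determined by these $\psi$'s for all sufficiently high-genus closures, and because the cobordism maps are compatible with the canonical isomorphisms of \cite{bs3} relating different closures (up to multiplication by a unit), these closure-level maps assemble into a well-defined morphism of projectively transitive systems
\[
\KHMtfun(-Y,K_+,p_+)\longrightarrow\KHMtfun(-Y,K_-,p_-)
\]
carrying $\kinvt(K_+)=\invt(Y\ssm\nu(K_+),m\cup -m,\xi_{K_+})$ to $\kinvt(K_-)=\invt(Y\ssm\nu(K_-),m\cup -m,\xi_{K_-})$. That is the desired map.

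The main obstacle, I expect, is the compatibility of the bypass attachment with the Lagrangian-concordance cobordism—i.e.\ checking that the contact structures $\xi_{K_\pm}$ obtained after the bypass move actually do bound a symplectic cobordism inside the complement of $\Sigma$ with the right convex-boundary behavior on the meridional torus sutures, and that this can be closed up while keeping the distinguished surfaces $R_\pm$ of high genus and controlling the $\Sc$-structures so the relevant cobordism maps on monopole Floer homology are nonzero on the contact elements. Concretely this means matching the Liouville structure near $\partial\nu(K_\pm)$ coming from the bypass with the Liouville structure on a Weinstein neighborhood of $\Sigma$, and then running the closure construction of \cite{bsSHM} relative to this cobordism; I would handle this by first treating the ``split'' case (the trivial concordance, $\Sigma=\R\times K$, where everything is a product) to fix conventions, and then grafting on the genuinely nontrivial part of $\Sigma$, which is supported in a ball, so that all the gluing happens away from the region where the bypass lives. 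A secondary point to verify is well-definedness: the map should be independent, up to an overall unit in $\RR$, of the choices (truncation, Weinstein neighborhood, closure), which again follows from the naturality machinery of \cite{bs3} once the construction above is in place.
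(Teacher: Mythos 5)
Your overall strategy matches the paper's: identify a neighborhood of the concordance $L$ with a neighborhood of the trivial cylinder $K_-\times I$ via the Weinstein tubular neighborhood theorem, excise it from the symplectization, glue in the symplectization of the piece $Z=\bar Y_-\ssm(Y_-\ssm N(K_-))$ carrying the bypass and the closure data, and thereby produce a symplectic cobordism $(X,\omega)$ between contact closures of the two bypass-attached knot complements; then apply functoriality of Kronheimer--Mrowka's contact invariant and descend to $\KHMtfun$ using the naturality machinery of \cite{bs3}. Your closing remark about ``grafting'' the nontrivial part of $\Sigma$ onto the split case is essentially the paper's gluing of $Z\times I$ to $(Y\times I)\ssm\varphi((S^1\times D^2_{1.5})\times I)$ along an annular region.

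The gap is in the final functoriality step. You invoke functoriality of $\psi$ for a ``(weakly, or better, strongly) symplectically semi-fillable cobordism with convex boundary,'' but the result actually available, and the one the paper uses via \cite{ht}, is functoriality under \emph{exact} symplectic cobordisms; establishing exactness is the substantive technical content of the proof and is where your argument would stall. Concretely, three verifications are missing. First, $L$ must be an \emph{exact} Lagrangian --- this holds because $\int_{K_-}e^t\alpha=0$ for the Legendrian $K_-$ --- so that the Weinstein identification $\varphi$ can be chosen to be an exact symplectomorphism with $\varphi^*\lambda-\lambda_-=df$ and $f\equiv 0$ near the negative end. Second, the primitives on the two pieces of $X$ must be patched into a global primitive $\lambda_X$ of $\omega$, which the paper does with a cutoff function $\rho$. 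Third, and most delicately, the resulting $\lambda_X$ restricts to a contact form for $\bar\xi_-^\varphi$ at the negative boundary but, because of the $d(\rho f)$ correction, not necessarily to a contact form at the positive boundary; the paper repairs this by appending a collar $\bar Y_+^\varphi\times[1,C]$ using Eliashberg's flexibility result \cite[Proposition 3.1]{yasha7}, yielding an honest exact symplectic cobordism $\tilde X$ with contact forms at both ends. Without this last step the cobordism map is not known to carry the contact class of $(\bar Y_+^\varphi,\bar\xi_+^\varphi)$ to that of $(\bar Y_-^\varphi,\bar\xi_-^\varphi)$, so your appeal to a weak or convex-boundary version of functoriality does not suffice as written.
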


We expect this map to be an invariant of the Lagrangian cylinder from $K_-$ to $K_+$, perhaps after decorating this cylinder with an arc from $p_-$ to $p_+$, but leave this for future work. 

Our proof of Theorem \ref{thm:introlagconcordance} is modeled on the proof of the analogous result for $\ell$:  roughly, we remove a standard neighborhood of the Lagrangian concordance in $Y\times \R$ and glue back the symplectization of a certain contact manifold with boundary, so as to form an exact symplectic cobordism between the contact closures used to define  $\kinvt(K_{\pm})$. The theorem then follows from the functoriality of Kronheimer and Mrowka's contact invariant for closed contact 3-manifolds under exact symplectic cobordism. It is not known whether Ozsv{\'a}th and Szab{\'o}'s contact invariant in Heegaard Floer homology enjoys the same kind of functoriality, which makes proving the analogue of Theorem \ref{thm:introlagconcordance} for the LOSS invariant difficult. 

In Section \ref{sec:lagcobordism}, we use Theorem \ref{thm:introlagconcordance} to prove the nonvanishing of  $\kinvt(K)$ for a few examples, including Legendrian representatives of $m(8_{20})$ and $m(11n_{71})$. Our approach  is  to find another  knot $K'$ with nonzero invariant (for example, the $tb=-1$ unknot) and construct  a Lagrangian concordance from some negative stabilization of $K'$ to $K$ (stabilizing gives us more flexibility in constructing the concordance). The nonvanishing of $\kinvt(K)$ then follows from properties of $\kinvt$ discussed above.

Legendrian contact homology   enjoys a similar functoriality under Lagrangian cobordism. However, one cannot use the above  technique to deduce  nonvanishing results for $LCH$ in our examples since   $LCH$ vanishes for stabilized knots. On the other hand,  $LCH$ \emph{can} be used to show that the concordances  we construct are \emph{nonreversible}. The first examples of nonreversible Lagrangian concordances were discovered by Chantraine  \cite{chantraine}. In the course of our investigations, we found a new infinite family of nonreversible concordances, from Legendrian unknots to 3-stranded Legendrian pretzel knots. While this discovery  does not rely on our Legendrian invariant, it is perhaps of independent interest as it provides the first infinite family of nonreversible Lagrangian concordances between \emph{prime} knots.

\subsection{Future work} One of our major goals for future work is to prove an equivalence between our Legendrian and transverse knot invariants and the LOSS/GRID invariants defined in knot Floer homology, as explained below.

 Suppose $(M,\Gamma,\xi)$ is a sutured contact manifold. The combined work of Kronheimer and Mrowka \cite[Lemma 4.9]{km4}, Taubes \cite{taubes1,taubes2,taubes3,taubes4,taubes5}, Colin, Ghiggini, and Honda \cite{cgh3, cgh4, cgh5}, and Lekili \cite{lekili2} shows that \begin{equation}\label{eqn:isomf}\SHMtfun(-M,-\Gamma) \cong \SFH(-M,-\Gamma)\otimes\RR.\footnote{One can think of  $\SFH(-M,-\Gamma)\otimes\RR$ as a projectively transitive system. See also Kutluhan, Lee, and Taubes \cite{klt1,klt2,klt3,klt4,klt5}. }\end{equation} In the introduction to \cite{bsSHM}, we discuss a strategy for showing that this isomorphism identifies our contact invariant $\invt(M,\Gamma,\xi)$ with Honda, Kazez, and Mati{\'c}'s contact invariant \[EH(M,\Gamma,\xi)\otimes 1\in \SFH(-M,-\Gamma)\otimes \RR.\] Now suppose $(K,p)$ is a based, oriented Legendrian knot in $(Y,\xi)$. A special case of  (\ref{eqn:isomf}) is the following isomorphism between monopole knot homology and knot Floer homology, \begin{equation}\label{eqn:isomfk}\KHMtfun(-Y,K,p) \cong \widehat{HFK}(-Y,K,p)\otimes\RR.\end{equation}
By our construction of $\kinvt$, combined with Stipsicz and V{\'e}rtesi's interpretation of the LOSS invariant, the proposed equivalence of our contact invariant with Honda, Kazez, and Mati{\'c}'s would imply that the isomorphism in (\ref{eqn:isomfk}) identifies $\kinvt(K)$ with the LOSS invariant of $K$.

Such an identification would imply that $\kinvt$ and $\tinvt$ are effective, proving Conjecture \ref{conj:effective}. It would also establish a  functoriality result for the LOSS invariant with respect to Lagrangian concordance, analogous to Theorem \ref{thm:introlagconcordance}. As explained above, this functoriality, combined with the equivalence of the GRID and LOSS invariants for Legendrian knots in the standard tight $S^3$ would provide easily computable obstructions to the existence of Lagrangian concordances between such  knots. Interestingly, such an identification would also provide a proof of invariance of the LOSS invariant that does not depend on the hard/controversial ``uniqueness" part of the relative Giroux correspondence (see the discussion in the introduction to \cite{bsSHM}).

\subsection{Organization} 

In Section \ref{sec:prelims}, we  review  projectively transitive systems, the constructions of sutured monopole homology, monopole knot homology, and our contact invariant in $\SHM$. In Section \ref{sec:leginvt}, we define the  Legendrian and transverse knot invariants $\kinvt$ and $\tinvt$ and establish some of their basic properties. In Section \ref{sec:lagcobordism}, we prove that our Legendrian invariant is functorial with respect to Lagrangian concordance. In Section \ref{sec:examples}, we illustrate how this functoriality may be used to compute $\kinvt$ in several examples,  and we describe an infinite family of nonreversible Lagrangian concordances between prime knots.

\subsection{Acknowledgements} We thank Lenny Ng and Danny Ruberman for helpful conversations.

\section{Preliminaries}
\label{sec:prelims}

In this section, we review projectively transitive systems and the constructions of sutured monopole homology, monopole knot homology, and  our contact invariant from \cite{bsSHM}.

\subsection{Projectively transitive systems}
\label{ssec:transitive-systems}
The following is a very terse review of \emph{projectively transitive systems}; see \cite{bs3,bsSHM} for more details. 

\begin{definition}
Suppose $M_\alpha$ and $M_\beta$ are modules over a unital commutative ring $\RR$. We say that  elements $x,y\in M_\alpha$ are \emph{equivalent} if $x=u\cdot y$ for some  $u\in \RR^\times$. Likewise,  homomorphisms \[f,g: M_\alpha \to M_\beta\]  are  \emph{equivalent} if $f=u\cdot g$ for some  $u\in \RR^\times$.  
\end{definition}

\begin{remark}
We will write $x\doteq y$ or $f\doteq g$ to indicate that two  elements or homomorphisms are equivalent, and  will denote their equivalence classes by $[x]$ or $[f]$.
\end{remark}

\begin{definition}
Let $\RR$ be a unital commutative ring.  A \emph{projectively transitive system of $\RR$-modules} consists of a set $A$ together with:
\begin{enumerate}
\item a collection of $\RR$-modules $\{M_\alpha\}_{\alpha \in A}$ and
\item a collection of equivalence classes of homomorphisms $\{g^\alpha_\beta\}_{\alpha,\beta \in A}$ such that:
\begin{enumerate}
\item elements of the equivalence class $g^\alpha_\beta$ are isomorphisms from $M_\alpha$ to $M_\beta$, 
\item  $g^\alpha_\alpha=[id_{M_\alpha}]$,
\item $g^\alpha_\gamma = g^\beta_\gamma \circ g^\alpha_\beta$.
\end{enumerate}
\end{enumerate}
\end{definition}


The class of  projectively transitive systems of $\RR$-modules forms a category $\RPSys{}$ with the following notion of morphism.

\begin{definition}
\label{def:projtransysmor} A \emph{morphism} of  projectively transitive systems of $\RR$-modules \[F:(A,\{M_{\alpha}\},\{g^{\alpha}_{\beta}\})\to(B,\{N_{\gamma}\},\{h^{\gamma}_{\delta}\})\]  is  a collection of equivalence classes  of homomorphisms $F=\{F^\alpha_\gamma\}_{\alpha\in A,\,\gamma\in B}$  such that:
\begin{enumerate}
\item elements of the equivalence class $F^\alpha_\gamma$ are homomorphisms from $M_\alpha$ to $N_\gamma$,
\item \label{eqn:projtransysmor} $F^\beta_\delta\circ g^\alpha_\beta = h^\gamma_\delta\circ F^\alpha_\gamma$.
\end{enumerate}  
Note that $F$ is an \emph{isomorphism} iff the elements in each equivalence class $F^\alpha_\gamma$ are isomorphisms.
\end{definition}

\begin{remark}\label{rmk:completesubset} A collection of equivalence classes of homomorphisms $\{F^\alpha_\gamma\}$  with indices  ranging over any nonempty subset of $A\times B$ can be uniquely completed to a morphism as long as this collection satisfies the compatibility in \eqref{eqn:projtransysmor} where it makes sense.
\end{remark}

\begin{definition}
\label{def:element}
 An \emph{element} of a projectively transitive system of $\RR$-modules \[x\in\mathcal{M}=(A,\{M_{\alpha}\},\{g^{\alpha}_{\beta}\})\]
 is a collection of equivalence classes of elements $x = \{x_{\alpha}\}_{\alpha\in A}$ such that:
\begin{enumerate}
\item elements of the equivalence class $x_{\alpha}$ are elements of $M_\alpha$,
\item \label{eqn:projtransysmorelt} $x_\beta = g^\alpha_\beta(x_\alpha)$.
\end{enumerate}  
\end{definition}

\begin{remark}
\label{rmk:completesubset2}
 As in Remark \ref{rmk:completesubset}, a collection of equivalence classes of elements $\{x_{\alpha}\}$ with indices ranging over any nonempty subset of $A$ can be uniquely completed to an element of $\mathcal{M}$ as long as this collection satisfies the compatibility in \eqref{eqn:projtransysmorelt} where it makes sense.
 \end{remark}

\subsection{Sutured monopole homology}
\label{ssec:shm}

In this subsection, we review  our refinement of Kronheimer and Mrowka's sutured monopole homology, following \cite{bs3}.

Suppose $(M,\Gamma)$ is a \emph{balanced sutured manifold} as in  \cite[Definition 2.10]{bsSHM}.
An \emph{auxiliary surface} for $(M,\Gamma)$ is a compact, connected, oriented surface $F$ with $g(F)>0$ and $\pi_0(\partial F)\cong \pi_0(\Gamma)$.  Given such a surface, a closed tubular neighborhood $A(\Gamma)$  of $\Gamma$ in $\partial M$, and  an orientation-reversing diffeomorphism  \[h:\partial F\times[-1,1]\rightarrow A(\Gamma)\] which sends $\partial F\times \{\pm 1\}$ to $\partial (R_{\pm}(\Gamma)\smallsetminus A(\Gamma)),$ we form a \emph{preclosure}  \begin{equation*}\label{eqn:bF}M'=M\cup_h F\times [-1,1]\end{equation*}  by gluing $F\times[-1,1]$ to $M$ according to $h$ and rounding corners.  The \emph{balanced} condition on $(M,\Gamma)$ ensures that $M'$ has two diffeomorphic boundary components, $\partial_+ M'$ and $\partial_- M'$, which we may glue together by some diffeomorphism   to form a closed 3-manifold $Y$ containing a distinguished surface \[R:=\partial_+M' = -\partial_- M'\subset Y.\] In \cite{km4}, Kronheimer and Mrowka define a \emph{closure} of $(M,\Gamma)$ to be any pair $(Y,R)$ obtained in this way.  Our definition of closure, as needed for naturality, is  slightly more involved.

\begin{definition}[\cite{bs3}]
\label{def:smoothclosure} A \emph{marked closure} of $(M,\Gamma)$ is a tuple $\data = (Y,R,r,m,\eta)$ consisting of:
\begin{enumerate}
\item a closed, oriented,  3-manifold $Y$,
\item  a closed, oriented,  surface $R$ with $g(R)\geq 2$,
\item an oriented, nonseparating, embedded curve $\eta\subset R$,
\item a smooth, orientation-preserving embedding $r:R\times[-1,1]\hookrightarrow Y$,
\item a smooth, orientation-preserving embedding $m:M\hookrightarrow Y\smallsetminus\inr(\Img(r))$ such that: 
\begin{enumerate}
\item $m$ extends  to a diffeomorphism \[M\cup_h F\times [-1,1]\rightarrow Y\smallsetminus{\rm int}(\Img(r))\] for some $A(\Gamma)$, $F$, $h$, as above,
\item $m$ restricts to an orientation-preserving embedding \[R_+(\Gamma)\smallsetminus A(\Gamma)\hookrightarrow r(R\times\{-1\}).\]
\end{enumerate}
 \end{enumerate} 
 The \emph{genus} $g(\data)$ refers to the genus of $R$.
\end{definition}


 \begin{remark} Suppose $\data = (Y,R,r,m,\eta)$ is a marked closure of $(M,\Gamma)$. Then, the tuple \[-\data:=(-Y,-R,r,m,-\eta),\] obtained  by reversing the orientations of $Y$, $R$, and $\eta$, is a marked closure of $-(M,\Gamma):=(-M,-\Gamma),$ where $r$ and $m$ are the induced embeddings  of $-R\times[-1,1]$ and $-M$ into $-Y$. 
  \end{remark}
 
 

\begin{notation}For the rest of this paper,  $\RR$ will be the \emph{Novikov ring over $\zz$}, given by \[\RR=\bigg\{\sum_{\alpha}c_{\alpha}t^{\alpha}\,\bigg | \,\alpha\in\mathbb{R},\,c_{\alpha}\in\zz,\,\#\{\beta<n|c_{\beta}\neq 0\}<\infty\textrm{ for all } n\in \zz\bigg\}.\] 
\end{notation}

Following Kronheimer and Mrowka \cite{km4}, we made the following definition in \cite{bs3}.

\begin{definition}\label{def:shmt}Given a marked closure $\data = (Y,R,r,m,\eta)$ of $(M,\Gamma)$, the \emph{sutured monopole homology of $\data$} is the $\RR$-module \[\SHMt(\data):=\HMtoc(Y|R;\Gamma_\eta).\]
\end{definition}

Here, $\HMtoc(Y|R;\Gamma_\eta)$ is shorthand for the monopole Floer homology of $Y$ in the ``topmost" $\Sc$ structures relative to $r(R\times\{0\})$, 
\begin{equation}\label{eqn:relativelocal}\HMtoc(Y|R;\Gamma_\eta):=\bigoplus_{\substack{\spc\in\Sc(Y)\\ \langle c_1(\spc), [r(R\times\{0\})]\rangle = 2g(R)-2}} \HMtoc(Y,\spc; \Gamma_{r(\eta\times\{0\})}), \end{equation} where, for each $\Sc$ structure $\mathfrak{s}$,  $\Gamma_{r(\eta\times\{0\})}$ is the local system on the Seiberg-Witten configuration space $\mathcal{B}(Y,\mathfrak{s})$ with fiber  $\RR$ specified by  $r(\eta\times\{0\})\subset Y$ as  in \cite[Section 2.2]{km4}.

In \cite{km4}, Kronheimer and Mrowka proved that the isomorphism class of $\SHMt(\data)$ is an invariant of $(M,\Gamma)$. We strengthened this in \cite{bs3}: for any two marked closures $\data,\data'$ of $(M,\Gamma)$, we constructed an isomorphism \[\Psit_{\data,\data'}:\SHMt(\data)\to\SHMt(\data'),\]  well-defined up to multiplication by a unit in $\RR$, such that the modules in $\{\SHMt(\data)\}_{\data}$ and the equivalence classes of maps in $\{\Psit_{\data,\data'}\}_{\data,\data'}$ form a projectively transitive system of $\RR$-modules.

\begin{definition}
The \emph{sutured monopole homology of $(M,\Gamma)$} is the projectively transitive system of $\RR$-modules $\SHMtfun(M,\Gamma)$ given by the modules  and the equivalence classes above.
\end{definition} 

Sutured monopole homology is functorial in the following sense. Suppose \[f:(M,\Gamma)\to(M',\Gamma')\] is a diffeomorphism of sutured manifolds and $\data' = (Y',R',r',m',\eta')$ is a marked closure of $(M',\Gamma')$. Then \begin{equation}\label{eqn:dataf}\data'_f:=(Y',R',r',m'\circ f,\eta')\end{equation} is a marked closure of $(M,\Gamma)$. Let \[id_{\data'_f,\data'}: \SHMt(\data'_f)\to\SHMt(\data')\] be the identity map on $\SHMt(\data'_f) = \SHMt(\data')$. The equivalence classes of these identity maps can be completed to a morphism (as in Remark \ref{rmk:completesubset}) \[\SHMtfun(f):\SHMtfun(M,\Gamma)\to\SHMtfun(M',\Gamma'),\]  which is an invariant of the isotopy class of $f$. We proved in \cite{bs3} that these morphisms behave as expected under composition of diffeomorphisms, so that  $\SHMtfun$ defines a functor from $\DiffSut$ to $\RPSys{},$ where $\DiffSut$ is the category of balanced sutured manifolds and isotopy classes of diffeomorphisms between them.

\subsection{Monopole knot homology} 
\label{ssec:khm} In this subsection, we review our refinement of Kronheimer and Mrowka's monopole knot homology, following \cite{bs3}.

Suppose $K$ is an oriented knot in a closed 3-manifold $Y$ and $p$ is a basepoint on $K$. Let $D^2$ be the unit disk in the complex plane and let $S^1 = \partial D^2$. Suppose \[\varphi:S^1\times D^2\to Y\] is an embedding such that $\varphi(S^1\times\{0\})=K$ and $\varphi(\{1\}\times\{0\}) = p$. Let $Y(\varphi)$ be the balanced sutured manifold given by \[Y(\varphi):=(Y\ssm \inr(\Img(\varphi)), m^+_{\varphi}\cup -m^-_{\varphi}),\] where $m^{\pm}_{\varphi}$ are the oriented meridians on $\partial Y(\varphi)$ given by \[m^\pm_{\varphi}:=\varphi(\{\pm 1\}\times \partial D^2).\] The monopole knot homology  $\KHMtfun(Y,K,p)$ is defined, to first approximation,  as $\SHMtfun(Y(\varphi))$. Of course, this does not technically make sense since the latter  depends on $\varphi$. However, in \cite{bs3}, we defined a canonical isomorphism \[\Psit_{\varphi,\varphi'}:\SHMtfun(Y(\varphi))\to\SHMtfun(Y(\varphi'))\] for any two embeddings $\varphi,\varphi'$ as above.  These maps allow us to define $\KHMtfun(Y,K,p)$ without ambiguity (see Definition \ref{def:khm}). The map $\Psit_{\varphi,\varphi'}$ is constructed in two steps, as described below.

We first consider the case  in which $\Img(\varphi')\subset \Img(\varphi)$. 
Let $N$ be a solid torus neighborhood of $K$ with $\Img(\varphi)\subset \inr(N)$. Let $f_t:Y\to Y$, $t\in[0,1]$, be an ambient isotopy such that:
\begin{enumerate}
\item each $f_t$ fixes $p$,
\item each $f_t$ restricts to the identity outside of $N$,
\item $\Img(f_1\circ\varphi)=\Img(\varphi')$,
\item $f_1$ sends the meridional disks $\varphi(\{\pm 1\} \times D^2)$ to the meridional disks $\varphi'(\{\pm 1\}\times  D^2)$.
\end{enumerate} 
Then $f_1$ restricts to a diffeomorphism of sutured manifolds, \[\bar f_1:Y(\varphi)\to Y(\varphi'),\]  and we define $\Psit_{\varphi,\varphi'}$ by \begin{equation*}\label{eqn:embeddingscon}\Psit_{\varphi,\varphi'}:=\SHMtfun(\bar f_1).\end{equation*}
Next, we consider the case in which  $\varphi,\varphi'$ are arbitrary embeddings. Let $\varphi''$ be a third embedding with $\Img(\varphi'')\subset \Img(\varphi)\cap \Img(\varphi')$. We define 
\begin{equation*}\label{eqn:embeddingsgen}\Psit_{\varphi,\varphi'}:=(\Psit_{\varphi',\varphi''})^{-1}\circ \Psit_{\varphi,\varphi''},\end{equation*} where the maps $\Psit_{\varphi',\varphi''}$ and  $\Psit_{\varphi,\varphi''}$ are  as defined previously. 

We proved in \cite{bs3} that these maps are well-defined  and satisfy the transitivity relation \[\Psit_{\varphi,\varphi''} = \Psit_{\varphi',\varphi''}\circ\Psit_{\varphi,\varphi'}.\] The projectively transitive systems in $\{\SHMtfun(Y(\varphi))\}_{\varphi}$ and the isomorphisms in $\{\Psit_{\varphi,\varphi'}\}_{\varphi,\varphi'}$ thus form  a \emph{transitive system of projectively transitive systems of $\RR$-modules}, which determines a larger projectively transitive system of $\RR$-modules, leading to the following  from \cite{bs3}.

\begin{definition}
\label{def:khm}
The monopole knot homology $\KHMtfun(Y,K,p)$ is the projectively transitive system of $\RR$-modules determined by $\{\SHMtfun(Y(\varphi))\}_{\varphi}$ and $\{\Psit_{\varphi,\varphi'}\}_{\varphi,\varphi'}$.
\end{definition}  

When we do not wish to keep track of $p$, we will simply use the notation $\KHMtfun(Y,K)$.

Monopole knot homology is functorial in the following sense. Suppose $f$ is a diffeomorphism from $(Y,K,p)$ to $(Y',K',p')$. For each tubular neighborhood  $\varphi$ of $K$ as  defined above,  $f$ defines a diffeomorphism of balanced sutured manifolds,  \[\bar f:Y(\varphi)\to Y'(f\circ\varphi).\]  The map $\bar f$  then induces a map \[\SHMtfun(\bar f):\SHMtfun(Y(\varphi))\to \SHMtfun(Y'(f\circ\varphi)),\]  which then determines an isomorphism  \[\KHMtfun(f):\KHMtfun(Y,K,p)\to\KHMtfun(Y',K',p').\] This isomorphism is well-defined in that it does not depend on $\varphi$, which is to say the diagram 
\[\xymatrix@C=65pt@R=30pt{
\SHMtfun(Y(\varphi)) \ar[r]^{\SHMtfun(\bar f)} \ar[d]_{\Psit_{\varphi,\varphi'}} &  \SHMtfun(Y(f\circ\varphi)) \ar[d]^{\Psit_{f\circ \varphi,f\circ\varphi'} }\\
\SHMtfun(Y(\varphi')) \ar[r]_{\SHMtfun(\bar f)}&\SHMtfun(Y(f\circ\varphi'))   \\
} \]
commutes for all $\varphi,\varphi'$. Moreover, these isomorphisms are invariants of isotopy classes of based diffeomorphisms and respect composition  in the obvious way (see \cite[Theorem 8.5]{bs3}).

\subsection{The contact invariant in $SHM$} Suppose $(M,\Gamma)$ is a balanced sutured manifold and $\xi$ is a contact structure on $M$ such that $\partial M$ is convex and $\Gamma$ divides the characteristic foliation of $\partial M$ induced by $\xi$. We call the triple $(M,\Gamma,\xi)$  a \emph{sutured contact manifold}. 
In this subsection, we  review the construction of the contact invariant \[\invt(M,\Gamma,\xi) \in \SHMtfun(-M,-\Gamma)\] defined in \cite{bsSHM} and list some of its properties.

An important notion is that of a \emph{contact preclosure} of $(M,\Gamma)$. This is a preclosure $M'$ of $(M,\Gamma)$ together with a certain contact structure $\xi'$ on $M'$ extending $\xi$ such that $\partial M'$ is convex with dividing set $\Gamma'$ consisting of two parallel nonseparating curves on each component $\partial_{\pm}M'$. The negative region $R_-(\Gamma')$ restricted to $\partial_+M'$ is an annulus bounded by these curves, and likewise for the positive region $R_+(\Gamma')$ on $\partial_-M'$. The details of the construction of contact preclosures can be found in \cite[Subsection 3.1]{bsSHM} but are not important for this paper.

Given a contact preclosure $(M',\xi')$, one can form a closed contact 3-manifold $(Y,\bar\xi)$ by gluing $\partial_+M'$ to $\partial_-M'$ by a diffeomorphism which identifies the positive region on one component with the negative region on the other, so that the distinguished surface $R:=\partial_+M'=-\partial_-M'$ is convex with negative region an annulus. One might call such a $(Y,\bar\xi)$ a \emph{contact closure} of $(M,\Gamma,\xi)$. In \cite{bsSHM}, we gave the   slightly more involved definition below, as needed for naturality.

\begin{definition}\label{def:contactclosure}
A \emph{marked contact closure} of $(M,\Gamma,\xi)$ consists of a marked closure $\mathscr{D} = (Y,R,r,m,\eta)$ of $(M,\Gamma)$ together with a contact structure $\bar \xi$ on $Y$ such that 
\begin{enumerate}
\item $m$ restricts to a contact embedding of $(M\ssm N(\Gamma),\xi)$ into $(Y,\bar \xi)$ for some regular neighborhood $N(\Gamma)$ of $\Gamma$,
\item this restriction  of $m$ extends to a contactomorphism \[(M',\xi')\to (Y\ssm\inr(\Img(r)),\bar \xi)\] for some contact preclosure $(M',\xi')$ of $(M,\Gamma,\xi)$. 
\item $r^*(\bar\xi)$ is a $[-1,1]$-invariant contact structure on $R\times[-1,1]$.
\item the curve $r(\eta\times\{0\})$ is dual to the core of the negative annular region of $r(R\times\{0\})$. 
\end{enumerate}
\end{definition}

Suppose $(\data=(Y,R,r,m,\eta),\bar \xi)$ is a marked contact closure of $(M,\Gamma,\xi)$. As shown in \cite[Subsection 3.2]{bsSHM}, the fact that $r(R\times\{0\})$ is convex with negative region an annulus implies that \[\psi(Y,\bar\xi)\in\HMtoc(-Y,\spc_{\bar\xi};\Gamma_{-\eta})\subset \HMtoc(-Y|{-}R;\Gamma_{-\eta})=\SHMt(-\data),\]  where $\psi(Y,\bar\xi)$ is the monopole Floer contact invariant of $(Y,\bar\xi)$ defined by Kronheimer and Mrowka. This leads to the following definition from \cite{bsSHM}.

\begin{definition}
\label{def:contactinvariant} Given a  marked contact closure $(\data = (Y,R,r,m,\eta),\bar\xi)$  of $(M,\Gamma,\xi)$ of genus $g\geq g(M,\Gamma)$, we define $ \invt^g(M,\Gamma,\xi)$ to be the element of $\SHMtfun(-M,-\Gamma)$ determined by the equivalence class of  \[\invt(\data,\bar\xi):=\psi(Y,\bar\xi)\in\SHMt(-\data),\] in the sense of Remark \ref{rmk:completesubset2}. 
\end{definition}

We proved that $\invt^g(M,\Gamma,\xi)$ is well-defined for each $g$, per the following theorem.

\begin{theorem}
\label{thm:well-defined} If $(\data,\bar\xi)$ and $(\data',\bar\xi')$ are two  marked contact closures of $(M,\Gamma,\xi)$ of the same genus, then \[\Psit_{-\data,-\data'}(\invt(\data,\bar\xi)) \doteq \invt(\data',\bar\xi').\] 
\end{theorem}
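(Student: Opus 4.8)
The plan is to reduce the statement to two standard properties of the Kronheimer--Mrowka contact invariant: its naturality under contactomorphisms, and its invariance under isotopy of contact structures. Since $\doteq$ absorbs the unit ambiguity in $\Psit_{-\data,-\data'}$, it suffices to fix honest representatives and to track the elements $\invt(\data,\bar\xi)=\psi(Y,\bar\xi)$ and $\invt(\data',\bar\xi')=\psi(Y',\bar\xi')$, which lie in $\HMtoc(-Y,\spc_{\bar\xi};\Gamma_{-\eta})$ and $\HMtoc(-Y',\spc_{\bar\xi'};\Gamma_{-\eta'})$ respectively. First I would recall from \cite{bs3} that, for two marked closures of the same genus, the canonical isomorphism $\Psit_{-\data,-\data'}$ is assembled from finitely many elementary moves, each realized by a diffeomorphism of the underlying closures (one carrying $M$ to $M$ via the prescribed embeddings), possibly composed with the canonical identification of the local systems $\Gamma_\eta$ and $\Gamma_{\eta'}$ coming from an isotopy of $\eta$ to $\eta'$ inside the distinguished surface. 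It then suffices to check that the contact elements are compatible under each such move.

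For a move given by a diffeomorphism $\Phi\colon Y\to Y'$ of closures --- so $\Phi\circ m=m'$, $\Phi\circ r=r'$, $\Phi(\eta)=\eta'$ --- one first notes that $\Phi$ already restricts to a contactomorphism on $m(M\ssm N(\Gamma))$, since $m$ and $m'$ are by hypothesis contact embeddings of $(M\ssm N(\Gamma),\xi)$ into $(Y,\bar\xi)$ and $(Y',\bar\xi')$. On the rest of $Y$ --- the image of $r$ together with the glued-in auxiliary piece --- both $\bar\xi$ and $\bar\xi'$ are built from the standardized contact data entering a contact preclosure and its closing-up, so here one invokes the uniqueness up to contact isotopy of those choices from \cite[Section 3]{bsSHM} (Giroux flexibility near the convex surface $r(R\times\{0\})$, uniqueness of $[-1,1]$-invariant neighborhoods, and gluing of tight pieces along convex boundary) to isotope $\bar\xi'$, rel $m'(M\ssm N(\Gamma))$, until $\Phi$ becomes a contactomorphism $(Y,\bar\xi)\to(Y',\bar\xi')$. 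Naturality of $\psi$ under contactomorphisms then yields $\Phi_*\psi(Y,\bar\xi)\doteq\psi(Y',\bar\xi')$ on the relevant summands of $\HMtoc$; since $\SHMtfun(\Phi)$ is exactly this induced map (together with its induced local-system isomorphism) and the isotopy of $\bar\xi'$ leaves $\psi(Y',\bar\xi')$ fixed, the claim follows for this move. A local-system move is handled identically: an isotopy of $\eta$ inside the convex surface $r(R\times\{0\})$, through curves dual to the core of its annular negative region, can be realized by an ambient contactomorphism of $(Y,\bar\xi)$ inducing precisely the canonical local-system identification used to build $\Psit_{-\data,-\data'}$.

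This leaves the base case in which $\data=\data'$ as smooth marked closures, so $\Psit_{-\data,-\data'}$ is the identity, but $\bar\xi,\bar\xi'$ are two contact structures both witnessing the marked-contact-closure condition; here I would show directly that $\bar\xi$ and $\bar\xi'$ are isotopic on $Y$. Both contain $m(M\ssm N(\Gamma),\xi)$ as a contact submanifold, are $[-1,1]$-invariant along $r$, and make $r(R\times\{0\})$ convex with an appropriately dividing annular negative region; since the contact preclosure and closing-up constructions of \cite[Section 3]{bsSHM} involve no choices beyond isotopy, $\bar\xi$ and $\bar\xi'$ are isotopic as contact structures on $Y$, and isotopy invariance of $\psi(Y,\cdot)$ finishes the argument.

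The hard part will be exactly the flexibility and uniqueness inputs used in the last two steps: one must verify that \emph{every} auxiliary choice entering a marked contact closure --- the contact structure on the glued-in $F\times[-1,1]$ piece, the identification of positive with negative regions along the gluing, and the final gluing of $\partial_+M'$ to $\partial_-M'$ --- can be normalized away by contact isotopies, and that these isotopies can be taken compatibly with the smooth comparison diffeomorphisms of \cite{bs3} while carrying the $\eta$-twisted local systems along correctly. This is the step where convex surface theory, in the form already developed in \cite[Section 3]{bsSHM}, carries the load.
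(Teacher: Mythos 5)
This theorem is not proved in the present paper at all: it is recalled verbatim from \cite{bsSHM}, where the actual argument lives. So the comparison has to be against that argument, and against it your proposal has a genuine gap. You assume that the canonical isomorphism $\Psit_{-\data,-\data'}$ between two same-genus marked closures is assembled from elementary moves each of which is realized by a diffeomorphism of the closures carrying $m$ to $m'$, $r$ to $r'$, and $\eta$ to $\eta'$. That is not how these maps are built in \cite{bs3}, and it cannot be: two closures of the same genus are obtained by gluing $\partial_+M'$ to $\partial_-M'$ by gluing diffeomorphisms that may differ by an arbitrary mapping class of $R$, so the resulting $3$-manifolds $Y$ and $Y'$ need not even be diffeomorphic. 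The same-genus comparison maps are defined via Floer's excision theorem --- cobordism-induced maps obtained by cutting along parallel copies of $r(R\times\{\pm 1\})$ and regluing --- possibly composed with diffeomorphism-induced maps. Your reduction to ``naturality of $\psi$ under contactomorphism plus isotopy invariance'' therefore only treats the special case where the closures happen to be diffeomorphic compatibly with all the marking data, and it silently skips the heart of the theorem: showing that the excision cobordism maps carry $\psi(Y,\bar\xi)$ to $\psi(Y',\bar\xi')$. In \cite{bsSHM} this is done by exhibiting the relevant excision cobordisms as exact symplectic cobordisms between the contact closures (exploiting that $r(R\times\{0\})$ is convex with negative region an annulus and that $\eta$ is dual to its core, which is also what makes the twisted local systems match up), and then invoking functoriality of the monopole contact invariant under exact symplectic cobordisms. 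None of that appears in your outline.

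The parts of your proposal that do appear in the real argument --- uniqueness up to isotopy of the contact structure on a fixed smooth closure, realized by convex surface theory and the standardized form of contact preclosures, plus naturality of $\psi$ under contactomorphism --- are the comparatively soft inputs. Your own closing paragraph correctly flags that the auxiliary choices must be normalized by contact isotopies ``compatibly with the smooth comparison diffeomorphisms of \cite{bs3},'' but the compatibility that actually has to be verified is with excision cobordisms, not diffeomorphisms, and that is where the symplectic geometry enters. As written, the proposal would not close.
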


Furthermore, we showed that for $g$ sufficiently large the  contact elements $\invt^g(M,\Gamma,\xi)$ are all equal, per the following theorem.

\begin{theorem}
\label{thm:well-defined2} For every $(M,\Gamma,\xi)$, there is an integer $N(M,\Gamma,\xi)$ such that \[\invt^g(M,\Gamma,\xi) = \invt^h(M,\Gamma,\xi)\] for all $g,h\geq N(M,\Gamma,\xi)$. Said differently, if  $(\data,\bar\xi)$ and $(\data',\bar\xi')$ are marked contact closures of $(M,\Gamma,\xi)$ of genus at least $N(M,\Gamma,\xi)$, then \[\Psit_{-\data,-\data'}(\invt(\data,\bar\xi)) \doteq \invt(\data',\bar\xi').\] 
\end{theorem}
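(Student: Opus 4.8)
The plan is to deduce the statement from Theorem~\ref{thm:well-defined} together with one additional input: that the contact element is unchanged under a single genus-raising ``stabilization'' of a marked contact closure, as long as the genus is already large enough. Granting this, suppose $(\data,\bar\xi)$ and $(\data',\bar\xi')$ are marked contact closures of genera $g\leq g'$, both large. Stabilizing $(\data,\bar\xi)$ a total of $g'-g$ times produces a marked contact closure of genus $g'$ whose contact element matches that of $(\data,\bar\xi)$ under the canonical comparison maps $\Psit$; Theorem~\ref{thm:well-defined} then identifies this stabilized closure with $(\data',\bar\xi')$, and chaining the two comparisons (using the transitivity of the $\Psit$) gives $\Psit_{-\data,-\data'}(\invt(\data,\bar\xi))\doteq\invt(\data',\bar\xi')$. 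Taking $N(M,\Gamma,\xi)$ to be the threshold past which the stabilization step is valid then proves both formulations of the theorem.

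For the stabilization step, I would first define a \emph{stabilization} $(\data^+,\bar\xi^+)$ of a marked contact closure $(\data=(Y,R,r,m,\eta),\bar\xi)$ of genus $g$. This should be a standard local operation, supported in a fixed contact model contained in $Y\ssm m(M)$ --- say, inside the $[-1,1]$-invariant collar $r(R\times[-1,1])$ --- that raises the genus of the distinguished surface by one without disturbing $M$, the negative annular region, or its divide. One then checks, by inspecting the model, that $(\data^+,\bar\xi^+)$, with an appropriately chosen nonseparating curve $\eta^+\subset R^+$, satisfies conditions (1)--(5) of Definition~\ref{def:smoothclosure} and (1)--(4) of Definition~\ref{def:contactclosure}, so that it is an honest marked contact closure of $(M,\Gamma,\xi)$ of genus $g+1$. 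The crucial further requirement is to arrange the model so that the passage from $(Y,\bar\xi)$ to $(Y^+,\bar\xi^+)$ is realized by an exact symplectic cobordism $W$ --- built, for instance, from a product cobordism by attaching standard Weinstein handles inside the model ball --- and so that $W$ is a product outside a local piece of the type used in \cite{bs3} to define the comparison maps between closures of different genus.

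The heart of the argument is then to compute the cobordism map of $W$ on contact invariants and to identify it with the canonical isomorphism between $\SHMt(-\data^+)$ and $\SHMt(-\data)$. For the first point, the functoriality of Kronheimer and Mrowka's contact invariant under exact symplectic cobordisms --- the same input used to prove Theorem~\ref{thm:introlagconcordance} --- shows that the map induced by the reverse of $W$ carries $\psi(Y^+,\bar\xi^+)$ to $\psi(Y,\bar\xi)$ up to a unit of $\RR$, that is, it carries $\invt(\data^+,\bar\xi^+)$ to $\invt(\data,\bar\xi)$. For the second point, the comparison map $\Psit_{-\data^+,-\data}$ of \cite{bs3} is itself built from a cobordism (and Floer-excision) map of exactly this shape, and for $g$ sufficiently large the adjunction inequalities for $\HMtoc$ force such a map to restrict to an isomorphism between the ``topmost'' summands in \eqref{eqn:relativelocal} for $-\data^+$ and for $-\data$, agreeing with $\Psit_{-\data^+,-\data}$ up to a unit. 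Combining the two points gives $\Psit_{-\data^+,-\data}(\invt(\data^+,\bar\xi^+))\doteq\invt(\data,\bar\xi)$, equivalently $\Psit_{-\data,-\data^+}(\invt(\data,\bar\xi))\doteq\invt(\data^+,\bar\xi^+)$, which is exactly the genus-raising invariance used above.

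I expect the main obstacle to be this last identification: showing that the map induced by the stabilization cobordism agrees, up to units and after passing to the relevant summands, with the abstractly-defined comparison map $\Psit$ of \cite{bs3}, and that this agreement holds once the genus exceeds the threshold at which adjunction pins down the contributing $\Sc$ structures. This requires both a choice of local stabilization model compatible with the conventions of \cite{bs3} and a genus-dependent application of the monopole adjunction inequality; it is where the constant $N(M,\Gamma,\xi)$, rather than simply $g(M,\Gamma)$, enters the statement.
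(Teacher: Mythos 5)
This theorem is imported from \cite{bsSHM}; the present paper states it without proof, so the comparison is necessarily with the argument in that reference. Your skeleton --- reduce to a single genus-raising move using Theorem \ref{thm:well-defined} and the transitivity of the maps $\Psit$, then establish that move via the functoriality of Kronheimer and Mrowka's contact invariant under exact symplectic cobordisms --- is the right circle of ideas (it is the same toolkit used in the proof of Theorem \ref{thm:lagconcordance}).

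The step you flag as ``the main obstacle'' is, however, a genuine gap, and the argument you sketch for it fails. Knowing that the cobordism map of your stabilization and the canonical isomorphism $\Psit_{-\data^+,-\data}$ are both isomorphisms between the topmost summands does not imply they agree up to a unit of $\RR$: two isomorphisms of the same module differ by an arbitrary automorphism, and ruling that out for the contact class is precisely the content of the theorem. The only way to close this is to arrange that your stabilization cobordism literally \emph{is} (up to moves already shown in \cite{bs3} to induce $\Psit$) one of the splicing/excision cobordisms by which $\Psit_{-\data^+,-\data}$ is defined, so the identification holds by construction rather than by counting summands. That requirement undermines your local model: attaching Weinstein handles in a ball inside $r(R\times[-1,1])$ changes $Y$ by surgery but does not obviously produce a tuple satisfying Definition \ref{def:smoothclosure} --- raising $g(R)$ forces one to replace the auxiliary surface $F$ by one of larger genus and to reglue, a global modification along $R$, which is why \cite{bs3} compares closures of different genus via Floer excision along parallel copies of $R$ rather than by handle attachment in a Darboux ball. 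Finally, the threshold $N(M,\Gamma,\xi)$ cannot come from the adjunction inequality as you suggest, since adjunction already isolates the top summand in \eqref{eqn:relativelocal} for every admissible genus $g\geq 2$; in \cite{bsSHM} it arises from the contact-geometric constructions needed to realize the genus-raising excisions compatibly with $\bar\xi$.
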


This theorem motivated the following definition in \cite{bsSHM}.

\begin{definition}
\label{def:contactinvariantuniv} We define \[\invt(M,\Gamma,\xi):=\invt^g(M,\Gamma,\xi)\in\SHMtfun(-M,-\Gamma)\] for any $g\geq N(M,\Gamma,\xi)$.
\end{definition}

Below, we recall some properties of our contact invariant $\psi$ (analogous properties hold for each $\psi^g$) that were proven in \cite{bsSHM}.

\begin{proposition}
\label{prop:contactomorphism} Suppose  $f$ is a contactomorphism from $(M,\Gamma,\xi)$ to $(M',\Gamma',\xi')$. Then the induced map \[\SHMtfun(f):\SHMtfun(-M,-\Gamma)\to\SHMtfun(-M',-\Gamma')\] sends $\invt(M,\Gamma,\xi)$ to $\invt(M',\Gamma',\xi')$. In particular, the map on $\SHMtfun(-M,-\Gamma)$ induced by a contact isotopy preserves the contact invariant.
\end{proposition}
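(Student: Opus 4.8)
The plan is to reduce the statement to the already-established naturality of the contact invariant at the level of closed contact $3$-manifolds, namely the functoriality of Kronheimer and Mrowka's invariant $\psi(Y,\bar\xi)$ under contactomorphism, together with the transitivity maps $\Psit_{-\data,-\data'}$ which define the morphism $\SHMtfun(f)$. First I would fix a contactomorphism $f:(M,\Gamma,\xi)\to(M',\Gamma',\xi')$ and choose a marked contact closure $(\data'=(Y',R',r',m',\eta'),\bar\xi')$ of $(M',\Gamma',\xi')$ of genus at least $N(M',\Gamma',\xi')$, large enough to also exceed $N(M,\Gamma,\xi)$ after pulling back by $f$ (using that $f$ is a diffeomorphism, so the bound $N$ for $(M,\Gamma,\xi)$ can be taken the same). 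Then $\data'_f=(Y',R',r',m'\circ f,\eta')$ is a marked closure of $(M,\Gamma)$ in the sense of \eqref{eqn:dataf}, and I claim $(\data'_f,\bar\xi')$ is in fact a marked contact closure of $(M,\Gamma,\xi)$: the contact embedding condition (1) of Definition \ref{def:contactclosure} holds because $m'$ restricts to a contact embedding of $(M'\ssm N(\Gamma'),\xi')$ and $f$ is a contactomorphism carrying $(M\ssm f^{-1}(N(\Gamma')),\xi)$ to it, and condition (2) holds by precomposing the contactomorphism $(M'',\xi'')\to(Y'\ssm\inr(\Img(r')),\bar\xi')$ with the contactomorphism of contact preclosures induced by $f$; conditions (3) and (4) involve only $r'$ and $\eta'$ and are unchanged.

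Next I would unwind the definitions of both sides of the desired equality. By Definition \ref{def:contactinvariant} and Definition \ref{def:contactinvariantuniv}, $\invt(M',\Gamma',\xi')$ is the element of $\SHMtfun(-M',-\Gamma')$ determined by $[\psi(Y',\bar\xi')]\in\SHMt(-\data')$, while $\invt(M,\Gamma,\xi)$ is the element of $\SHMtfun(-M,-\Gamma)$ determined by the equivalence class of the \emph{same} monopole Floer contact invariant $\psi(Y',\bar\xi')$, now viewed inside $\SHMt(-\data'_f)$ — which as an $\RR$-module literally equals $\SHMt(-\data')$, since $\data'$ and $\data'_f$ differ only in the embedding datum $m$ versus $m'\circ f$. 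The morphism $\SHMtfun(f)$ was defined (in the discussion following the definition of $\SHMtfun$) so that its component $\SHMt(-\data'_f)\to\SHMt(-\data')$ is exactly the identity map $id_{-\data'_f,-\data'}$. Therefore, evaluating $\SHMtfun(f)$ on the element $\invt(M,\Gamma,\xi)$ sends the equivalence class $[\psi(Y',\bar\xi')]\in\SHMt(-\data'_f)$ to the same class $[\psi(Y',\bar\xi')]\in\SHMt(-\data')$, which by definition is precisely the element $\invt(M',\Gamma',\xi')$. Since an element of a projectively transitive system is determined by any one of its components (Remark \ref{rmk:completesubset2}), this verifies the claim.

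The second sentence of the proposition is then immediate: a contact isotopy from $(M,\Gamma,\xi)$ to itself is in particular a contactomorphism isotopic to the identity, so $\SHMtfun$ applied to it equals $\SHMtfun(\mathrm{id})=\mathrm{id}$ on $\SHMtfun(-M,-\Gamma)$ (using that $\SHMtfun$ is invariant under isotopy of diffeomorphisms, as recorded in the review of $\SHMtfun$), and the first part then says this identity map fixes $\invt(M,\Gamma,\xi)$.

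The main obstacle I anticipate is the bookkeeping in the first paragraph: carefully checking that $(\data'_f,\bar\xi')$ genuinely satisfies \emph{all} of Definition \ref{def:contactclosure}, in particular producing the requisite contact preclosure of $(M,\Gamma,\xi)$ and the contactomorphism extending $m'\circ f|_{M\ssm N(\Gamma)}$, and making sure the genus bound $N$ can be chosen uniformly under the contactomorphism $f$. Everything downstream of that is a formal unwinding of how $\SHMtfun(f)$ is built from identity maps, together with the already-proven well-definedness Theorems \ref{thm:well-defined} and \ref{thm:well-defined2}, which guarantee that the choice of high-genus closure does not matter. One should also remark that the analogous argument applies verbatim to each $\invt^g$, since at fixed genus $\data'_f$ has the same genus as $\data'$ and the component maps of $\SHMtfun(f)$ are still identities.
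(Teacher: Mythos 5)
The paper does not actually prove this proposition here --- it is recalled from \cite{bsSHM} without argument --- so there is no in-paper proof to compare against. Your argument is the natural one and is correct: since $(\data'_f,\bar\xi')$ is a marked contact closure of $(M,\Gamma,\xi)$ whenever $(\data',\bar\xi')$ is one of $(M',\Gamma',\xi')$, both invariants are represented by the single class $[\psi(Y',\bar\xi')]$ in the literally identical modules $\SHMt(-\data'_f)=\SHMt(-\data')$, and the corresponding component of $\SHMtfun(f)$ is the identity by construction. The only point to tidy is the genus bound, which is handled simply by working at any $g\geq\max\{N(M,\Gamma,\xi),N(M',\Gamma',\xi')\}$ and invoking Theorems \ref{thm:well-defined} and \ref{thm:well-defined2}.
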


\begin{proposition}
\label{prop:ot}
If $(M,\Gamma,\xi)$ is overtwisted, then $\invt(M,\Gamma,\xi)=0$.
\end{proposition}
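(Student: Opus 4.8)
The plan is to reduce the claim to the analogous vanishing statement for the monopole Floer contact invariant of a \emph{closed} contact $3$-manifold. Recall from Definitions \ref{def:contactinvariant} and \ref{def:contactinvariantuniv} that, for any marked contact closure $(\data = (Y,R,r,m,\eta),\bar\xi)$ of $(M,\Gamma,\xi)$ with $g(\data)\geq g(M,\Gamma)$, the element $\invt^g(M,\Gamma,\xi)\in\SHMtfun(-M,-\Gamma)$ (with $g=g(\data)$) is the one determined by the equivalence class of $\invt(\data,\bar\xi)=\psi(Y,\bar\xi)\in\SHMt(-\data)$, and that $\invt(M,\Gamma,\xi)=\invt^g(M,\Gamma,\xi)$ once $g\geq N(M,\Gamma,\xi)$. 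So it suffices to show that $\psi(Y,\bar\xi)=0$ for one marked contact closure of sufficiently large genus; in fact the argument below shows it vanishes for all of them.

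First I would fix such a marked contact closure $(\data,\bar\xi)$, which exists by the constructions of \cite{bsSHM}. By condition (1) of Definition \ref{def:contactclosure}, the embedding $m$ restricts to a contact embedding of $(M\ssm N(\Gamma),\xi)$ into $(Y,\bar\xi)$ for some regular neighborhood $N(\Gamma)$ of $\Gamma$. Since $(M,\Gamma,\xi)$ is overtwisted, $\xi$ contains an embedded overtwisted disk $D$; as $D$ lies in the interior of $M$ while $N(\Gamma)$ may be taken to be a collar neighborhood of $\Gamma\subset\partial M$, after a contact isotopy we may assume $D\subset M\ssm N(\Gamma)$. Then $m(D)\subset Y$ is an embedded overtwisted disk for $\bar\xi$, so $(Y,\bar\xi)$ is overtwisted.

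Finally, invoking Kronheimer and Mrowka's theorem that the monopole Floer contact invariant of an overtwisted closed contact $3$-manifold vanishes, we obtain $\psi(Y,\bar\xi)=0$, hence $\invt(\data,\bar\xi)=0$ in $\SHMt(-\data)$, and therefore $\invt^g(M,\Gamma,\xi)=0$; taking $g\geq N(M,\Gamma,\xi)$ gives $\invt(M,\Gamma,\xi)=0$. The argument is essentially immediate given the closed-manifold input: the only point needing a word of care is that the overtwisted disk can be pushed off $N(\Gamma)$ so that it persists in the closure, and this is automatic since overtwisted disks lie in the interior, away from the convex boundary where the closure construction takes place. I therefore do not anticipate any real obstacle; the entire content is the closed contact $3$-manifold vanishing theorem, which we are free to cite.
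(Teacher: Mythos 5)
Your argument is correct and is essentially the proof given in \cite{bsSHM}, from which the present paper merely recalls this proposition without reproving it: any marked contact closure of an overtwisted sutured contact manifold is itself overtwisted (the overtwisted disk lies in the interior of $M$ and so persists under the contact embedding into the closure), and one then invokes the vanishing of the closed monopole Floer contact invariant for overtwisted contact structures. The only detail to polish is the attribution of that closed vanishing theorem, which in the monopole setting is not simply ``a theorem of Kronheimer and Mrowka''; it should be cited as it is sourced in \cite{bsSHM}.
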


For the next two results, suppose $(Y,\xi)$ is a closed contact 3-manifold and $Y(1)$ is the contact 3-manifold obtained by removing a Darboux ball. The first result below is a corollary of \cite[Proposition 3.23]{bsSHM}.

\begin{proposition}
\label{prop:closeddarboux}
If $\psi(Y,\xi)\neq 0$ then $\invt(Y(1))\neq 0$.
\end{proposition}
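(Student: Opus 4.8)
The plan is to relate the contact invariant $\invt(Y(1))$ of the once-punctured contact manifold to the closed contact invariant $\psi(Y,\xi)$ by exhibiting a particularly simple marked contact closure of $Y(1)$. Recall that $Y(1)$ is $(Y \ssm B^3, \Gamma, \xi|_{Y\ssm B^3})$ where $B^3$ is a Darboux ball and $\Gamma$ consists of a single suture on $S^2 = \partial(Y\ssm B^3)$, dividing it into two disks. First I would observe that, because the complement of a Darboux ball is so simple, there is a natural way to build a contact preclosure and then a marked contact closure $(\data, \bar\xi)$ of $Y(1)$ whose ambient closed contact manifold $(Y', \bar\xi)$ is the connected sum of $(Y,\xi)$ with a high-genus piece supporting the $[-1,1]$-invariant structure on $R\times[-1,1]$ required by Definition~\ref{def:contactclosure}. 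Concretely: glue an auxiliary surface $F$ (a once-punctured genus $g$ surface) thickened to $F\times[-1,1]$ onto the $S^2$ boundary along a neighborhood of the single suture, with the $[-1,1]$-invariant contact structure, and then glue the two resulting boundary components together. Since the Darboux ball is standard, the resulting $(Y',\bar\xi)$ is contactomorphic to a contact connected sum $(Y,\xi) \# (Y_R, \xi_R)$, where $(Y_R,\xi_R)$ is the closed contact manifold built from the trivial $[-1,1]$-invariant piece, i.e.\ a Stein-fillable (in fact, the one whose contact invariant is a unit, analogous to the statement that the relevant closure contact element is nonzero).

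The key step is then the connected sum formula for the monopole Floer contact invariant. By the work of Kronheimer--Mrowka (and its analogue in the sutured-closure setting used throughout \cite{bsSHM}), under the connected sum isomorphism
\[
\HMto(-Y' \,|\, {-}R;\Gamma_{-\eta}) \cong \HMto(-Y) \otimes_{\text{(something)}} \HMto(-Y_R \,|\, {-}R;\Gamma_{-\eta}),
\]
the contact invariant $\psi(Y',\bar\xi)$ corresponds to $\psi(Y,\xi)\otimes \psi(Y_R,\xi_R)$. The surface $R$ lives entirely in the $(Y_R,\xi_R)$ summand, where by the analysis in \cite[Subsection~3.2]{bsSHM} the contact element $\psi(Y_R,\xi_R)$ is a nonzero (indeed unit) element of the topmost summand $\HMto(-Y_R\,|\,{-}R;\Gamma_{-\eta})$. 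Therefore, if $\psi(Y,\xi)\neq 0$, the tensor product $\psi(Y,\xi)\otimes\psi(Y_R,\xi_R)$ is nonzero, hence $\psi(Y',\bar\xi)=\invt(\data,\bar\xi)\neq 0$ in $\SHMt(-\data)$. By Definition~\ref{def:contactinvariant} this means $\invt^g(Y(1))\neq 0$ for this $g$, and choosing $g\geq N(Y(1))$ large and invoking Theorem~\ref{thm:well-defined2} (together with Definition~\ref{def:contactinvariantuniv}), we conclude $\invt(Y(1))\neq 0$.

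The main obstacle I anticipate is making the connected sum step precise in the sutured monopole framework: one must verify that the chosen closure genuinely decomposes as a connected sum with the ambient closed manifold $-Y$ appearing as an honest summand (so that $\psi(Y,\xi)$ is an honest tensor factor), and that the connected sum isomorphism for $\HMto$ interacts correctly with the grading restriction to the topmost $\Sc$ structures relative to $[R]$ and with the local coefficient system $\Gamma_{-\eta}$ — i.e.\ that $\eta$ can be taken to lie in the $R$-summand and contributes only to that factor. A cleaner alternative, which I would pursue if available, is to cite \cite[Proposition~3.23]{bsSHM} directly: the statement asserts this is a corollary of that proposition, so presumably Proposition~3.23 already packages the comparison between $\psi(Y,\xi)$ and the contact element of a closure of $Y(1)$ obtained by gluing on a standard piece, and the only work here is to unwind definitions and note that nonvanishing is preserved. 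In that case the proof is essentially a one-line deduction, and the real content has been done earlier.
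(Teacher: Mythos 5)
Your fallback is exactly what the paper does: Proposition \ref{prop:closeddarboux} is given no proof here beyond the sentence that it ``is a corollary of \cite[Proposition 3.23]{bsSHM},'' so citing that result and unwinding definitions is the intended argument, and your guess about what it must contain is essentially right. Your primary reconstruction is also in the correct spirit --- a genus-$g$ closure of $Y(1)$ really is a connected sum of $Y$ with a surface bundle $S^1\times R$ carrying the distinguished surface, and the point is that $\psi(Y,\xi)$ survives as a tensor factor --- but the step you lean on, a general ``connected sum formula for the monopole Floer contact invariant,'' is not available off the shelf. Unlike $\hf$, the group $\HMto$ of a connected sum is not simply a tensor product (the general statement involves a derived tensor product and was not in the literature when this paper appeared), and the multiplicativity of $\psi$ under connected sum is likewise not a black box with local coefficients. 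What makes the argument work is the special structure of this particular closure: in the top $\Sc$ stratum relative to $R$ the $S^1\times R$ summand contributes a rank-one $\RR$-module, so one has an identification of $\HMtoc(-Y\#-(S^1\times R)\,|\,{-}R;\Gamma_{-\eta})$ with $\HMtoc(-Y)$ in the style of Kronheimer--Mrowka \cite{km4}, and one then tracks $\psi$ through the Weinstein $1$-handle cobordism realizing the connected sum. That is precisely what \cite[Proposition 3.23]{bsSHM} packages. Since you flagged this step as the anticipated obstacle and correctly offered the citation as the clean route, I would not call this a gap in your proof so much as an overestimate of how much can be outsourced to a generic connected sum formula.
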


\begin{corollary}
\label{cor:nonzerostronglyfillable}
If $(Y,\xi)$ is strongly symplectically fillable, then $\invt(Y(1))\neq 0$.
\end{corollary}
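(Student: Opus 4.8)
The plan is to deduce Corollary \ref{cor:nonzerostronglyfillable} directly from Proposition \ref{prop:closeddarboux} together with the standard nonvanishing theorem for the monopole Floer contact invariant of closed fillable contact manifolds. First I would recall that if $(Y,\xi)$ is strongly symplectically fillable, then by the work of Kronheimer and Mrowka on contact invariants (see \cite{km, kmosz}, building on the analogous Heegaard Floer result of Ozsv\'ath and Szab\'o), the closed contact invariant $\psi(Y,\xi)\in\HMto(-Y)$ is nonzero. This is the only external input needed; everything else is internal to the excerpt.

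Given that, the corollary is immediate: applying Proposition \ref{prop:closeddarboux} to $(Y,\xi)$, the hypothesis $\psi(Y,\xi)\neq 0$ yields $\invt(Y(1))\neq 0$, which is exactly the conclusion. So the proof is essentially one line. The only point requiring a word of care is the definition of $Y(1)$: removing a Darboux ball from $(Y,\xi)$ produces a sutured contact manifold $(Y\smallsetminus B^3, \Gamma, \xi|_{Y\smallsetminus B^3})$ with a single dividing curve on the convex sphere boundary, and one should note that this construction is unaffected (up to contactomorphism, hence up to the isomorphisms of Proposition \ref{prop:contactomorphism}) by the choice of Darboux ball, since any two Darboux balls in $(Y,\xi)$ are related by a contact isotopy. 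Thus $\invt(Y(1))$ is a well-defined element of $\SHMtfun(-(Y\smallsetminus B^3), -\Gamma)$ and the statement $\invt(Y(1))\neq 0$ makes sense.

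I do not anticipate any real obstacle here, since the substantive content has been pushed into Proposition \ref{prop:closeddarboux} (itself attributed to \cite[Proposition 3.23]{bsSHM}) and into the fillability-implies-nonvanishing theorem for $\psi(Y,\xi)$. If one wanted to be maximally self-contained, the single thing worth spelling out is the citation chain establishing $\psi(Y,\xi)\neq 0$ for strongly fillable $(Y,\xi)$: this follows from the fact that a strong filling can be capped off to a closed symplectic manifold and the functoriality of $\psi$ under the resulting symplectic cobordism, combined with the nontriviality of the relevant cobordism map, exactly as in the Heegaard Floer setting. With that in hand, the corollary follows by a single invocation of Proposition \ref{prop:closeddarboux}.
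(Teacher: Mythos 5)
Your proposal is correct and matches the paper's (implicit) derivation exactly: the paper states this corollary immediately after Proposition \ref{prop:closeddarboux} with no separate argument, the intended proof being precisely that strong fillability forces $\psi(Y,\xi)\neq 0$ by the standard nonvanishing theorem for the monopole Floer contact invariant, whence Proposition \ref{prop:closeddarboux} gives $\invt(Y(1))\neq 0$. Your additional remarks on the well-definedness of $Y(1)$ are harmless and consistent with the paper's conventions.
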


For the next result, suppose $K$ is a Legendrian knot in the interior of $(M,\Gamma,\xi)$ and  that $(M',\Gamma',\xi')$ is the result of contact $(+1)$-surgery on $K$. 

\begin{proposition}
\label{prop:shm-legendrian-surgery}
There is a morphism
\[ F_K:\SHMtfun(-M,-\Gamma) \to \SHMtfun(-M',-\Gamma') \]
which sends  $\invt(M,\Gamma,\xi)$  to $\invt(M',\Gamma',\xi')$.
\end{proposition}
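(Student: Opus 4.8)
The plan is to realize the contact $(+1)$-surgery inside a high-genus marked contact closure, use the functoriality of Kronheimer and Mrowka's contact invariant under Weinstein cobordism to obtain a map on the sutured monopole homology of that closure, and then promote this to a morphism of projectively transitive systems via Remark~\ref{rmk:completesubset}. Concretely, I would first fix a marked contact closure $(\data = (Y,R,r,m,\eta),\bar\xi)$ of $(M,\Gamma,\xi)$ of genus $g \geq \max\{N(M,\Gamma,\xi),\,N(M',\Gamma',\xi')\}$. Since $K$ lies in the interior of $M$, its image $m(K)$ is a Legendrian knot $\bar K \subset (Y,\bar\xi)$ disjoint from $\Img(r)$ and from the neighborhood $N(\Gamma)$ of Definition~\ref{def:contactclosure}. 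Performing contact $(+1)$-surgery on $\bar K$ inside $Y$ yields a closed contact manifold $(Y',\bar\xi')$, and I claim that, keeping $R$, $r$, $\eta$ fixed and replacing $m$ by the evident embedding $m^\sharp$ of $M'$, the tuple $(\data^\sharp = (Y',R,r,m^\sharp,\eta),\bar\xi')$ is a marked contact closure of $(M',\Gamma',\xi')$: the surgery is supported in the interior of $M$, away from $\Img(r)$ and from $N(\Gamma)=N(\Gamma')$, so conditions (3) and (4) of Definition~\ref{def:contactclosure} survive verbatim, while a contact preclosure of $(M,\Gamma,\xi)$ witnessing $\data$, surgered along $K$ in its interior, is a contact preclosure of $(M',\Gamma',\xi')$ witnessing $\data^\sharp$, with the same convex boundary and dividing set; note $g(\data^\sharp)=g\geq N(M',\Gamma',\xi')$.

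Next, since $(Y',\bar\xi')$ is obtained from $(Y,\bar\xi)$ by contact $(+1)$-surgery on $\bar K$, conversely $(Y,\bar\xi)$ is obtained from $(Y',\bar\xi')$ by contact $(-1)$-surgery (Legendrian surgery) on the Legendrian dual of $\bar K$, so there is a Weinstein $2$-handle cobordism $W$ from $(Y',\bar\xi')$ to $(Y,\bar\xi)$. The handle is attached away from $R$, so $W$ is a product on a neighborhood of $\Img(r)$, and, reversing orientations, the associated cobordism map restricts to the topmost $\Sc$-summands relative to $R$ and gives a homomorphism
\[ \Phi_\data : \SHMt(-\data) = \HMtoc(-Y|{-}R;\Gamma_{-\eta}) \longrightarrow \HMtoc(-Y'|{-}R;\Gamma_{-\eta}) = \SHMt(-\data^\sharp), \]
well-defined up to multiplication by a unit of $\RR$. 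By the functoriality of the monopole Floer contact invariant under exact (Weinstein) symplectic cobordism (following \cite{km}, and as used in \cite{bsSHM,sivek}), $\Phi_\data$ carries $\invt(\data,\bar\xi)=\psi(Y,\bar\xi)$ to $\invt(\data^\sharp,\bar\xi')=\psi(Y',\bar\xi')$.

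Finally I would check that the $\Phi_\data$, as $\data$ ranges over high-genus marked contact closures of $(M,\Gamma,\xi)$, assemble into a morphism $F_K : \SHMtfun(-M,-\Gamma) \to \SHMtfun(-M',-\Gamma')$. By Remark~\ref{rmk:completesubset} it is enough to verify that for any two such $\data_1,\data_2$ the square
\[ \xymatrix@C=55pt@R=26pt{
\SHMt(-\data_1) \ar[r]^{\Phi_{\data_1}} \ar[d]_{\Psit_{-\data_1,-\data_2}} & \SHMt(-\data_1^\sharp) \ar[d]^{\Psit_{-\data_1^\sharp,-\data_2^\sharp}} \\
\SHMt(-\data_2) \ar[r]_{\Phi_{\data_2}} & \SHMt(-\data_2^\sharp)
} \]
commutes up to a unit; then completing $\{\Phi_\data\}$ via Remark~\ref{rmk:completesubset} produces $F_K$, and since $\Phi_\data$ sends the equivalence class of $\psi(Y,\bar\xi)$ to that of $\psi(Y',\bar\xi')$ for every such $\data$, Definition~\ref{def:element} together with Remark~\ref{rmk:completesubset2} shows $F_K(\invt(M,\Gamma,\xi)) = \invt(M',\Gamma',\xi')$ (the genus hypothesis makes Theorems~\ref{thm:well-defined} and \ref{thm:well-defined2} applicable to all closures in sight, so these elements are the ones in Definition~\ref{def:contactinvariantuniv}). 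The main obstacle is precisely this commuting square: the structure maps $\Psit_{-\data_1,-\data_2}$ are built in \cite{bs3} from diffeomorphisms of closures together with excision-type cobordisms, all supported in the closure region away from $m(M)$ and hence away from $\bar K$, whereas $\Phi_\data$ is induced by a cobordism that is a product away from a neighborhood of $\bar K$; one must show that these two \emph{localized} constructions commute — at the level of composite cobordisms where $\Psit$ is realized by a diffeomorphism or handle cobordism, and, where excision identifications intervene, at the level of the induced maps on the topmost part of monopole Floer homology.
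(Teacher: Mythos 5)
Your proposal follows essentially the same route as the actual proof, which this paper does not reproduce but imports from \cite{bsSHM}: realize the surgery inside a marked contact closure, observe that the surgered closure is a marked contact closure of $(M',\Gamma',\xi')$, use the Weinstein $2$-handle cobordism (Legendrian surgery on the dual knot) and the functoriality of $\psi$ under exact symplectic cobordism to get maps $\SHMt(-\data)\to\SHMt(-\data^\sharp)$ preserving contact classes, and assemble these via Remark~\ref{rmk:completesubset}. The commuting square you flag as the remaining obstacle is exactly what is checked in \cite{bsSHM}, and it goes through for the reason you indicate --- the cobordisms realizing $\Psit_{-\data_1,-\data_2}$ and the surgery handle have disjoint supports, so the composite cobordisms agree up to diffeomorphism --- so your outline is correct as stated.
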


\section{Invariants of Legendrian and transverse knots}
\label{sec:leginvt}

In this section, we use our invariant of sutured contact manifolds together with a prescription of Stipsicz and V{\'e}rtesi \cite{sv} to define invariants of Legendrian and transverse knots, as outlined in the introduction.

\subsection{A Legendrian invariant}
\label{ssec:leginvt}
Suppose $(K,p)$ is a based, oriented Legendrian knot in $(Y,\xi)$. Here, we define the invariant \[\kinvt(K)\in \KHMtfun(-Y,K,p)\] described in the introduction. To first approximation, $\kinvt(K)$ is the  contact invariant of the sutured contact manifold obtained by removing a standard neighborhood of $K$ and attaching a bypass  so that the resulting dividing set consists of oppositely oriented meridians. We make this precise below. 

Consider the coordinates $(\theta, (x,y))$ on $S^1\times D^2$, where $\theta\in [0,2\pi]/(0\sim 2\pi)$ is identified with  $e^{i\theta}\in S^1$ and $(x,y)$ is identified with $x+yi\in D^2$. Let $\xi_{leg}$ be the contact structure on $S^1\times D^2$ given by \[\xi_{leg} := \ker(\sin(\theta)dx + \cos(\theta)dy).\]  Every  Legendrian knot has a  neighborhood contactomorphic to $(S^1\times D^2,\xi_{leg})$; such a neighborhood  is called a \emph{standard} neighborhood. 

To define $\kinvt(K)$, we first choose a contact embedding  \[\varphi:(S^1\times D^2, \xi_{leg})\to (Y,\xi)\] such that 
$\varphi(S^1\times\{0\}) = K$ and $\varphi(\{1\}\times\{0\}) = p$. The torus $\partial(\Img(\varphi))$ is convex with respect to $\xi$. Furthermore, the dividing set  on $\partial(\Img(\varphi))$ consists of two parallel curves of slope $-1$ with respect to the coordinate system determined by   the meridian  and longitude  on $\partial (S^1\times D^2)$ given by $\mu=\{1\}\times \partial D^2$ and $\lambda =S^1\times\{1\}$, respectively. 

Let $(Y',\Gamma',\xi')$ be the sutured contact manifold obtained from  the Legendrian knot complement $(Y(\varphi),\xi|_{Y(\varphi)})$ by attaching a \emph{bypass} (roughly, a thickened neighborhood of an overtwisted disk) along the arc $\alpha\subset \partial Y(\varphi)$  shown in Figure \ref{fig:bypassvs}.  This bypass attachment does not change the underlying topology of the knot complement, but changes the dividing set (and, hence, the contact structure) in a neighborhood of the arc $\alpha$ in the manner illustrated in the figure; we refer the reader to \cite{honda2,bsSHM} for  more in-depth discussions of bypass attachments.  In particular, note that $\Gamma'$ consists of two oppositely oriented meridians, so that $(Y',\Gamma')$ and $Y(\varphi)$ are diffeomorphic. We would like to pull the contact structure $\xi'$ back to a contact structure on $Y(\varphi)$. Accordingly, let \begin{equation}\label{eqn:fmapp}f:Y(\varphi)\to (Y',\Gamma')\end{equation} be a diffeomorphism which restricts to the identity map on \[Y(\varphi)\ssm N=Y'\ssm N,\] for some standard neighborhood $N$ of $K$ with $\Img(\varphi)\subset\inr(N)$. 
Let $\xi_{K,\varphi}$ be the contact structure on $Y(\varphi)$ given by \[\xi_{K,\varphi} = (f_*)^{-1}(\xi').\] 

\begin{figure}[ht]
\labellist
\small \hair 2pt
\pinlabel $\mu$ at 85 -9
\pinlabel \rotatebox{90}{$\mu-\lambda$} at -10 82
\pinlabel $\alpha$ at 85 37
\endlabellist
\centering
\includegraphics[width=7cm]{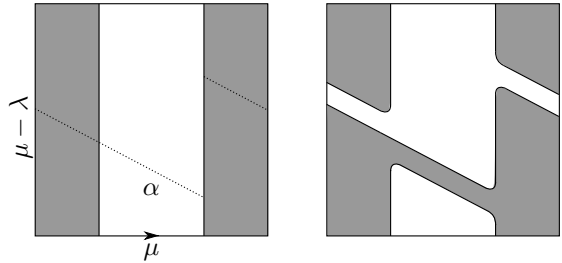}
\caption{Left, the dividing set on $\partial Y(\varphi)$ with respect to the coordinate system $(\mu,\mu-\lambda)$. Right, after attaching a bypass along  $\alpha$ to form $(Y',\xi')$.}
\label{fig:bypassvs}
\end{figure}

\begin{definition}
\label{def:legsv} Given a contact embedding $\varphi$ as above, we define $\kinvt(K)$ to be the element of $\KHMtfun(-Y,K,p)$ determined by the class
\[\kinvt(K,\varphi):=\invt(Y(\varphi),\xi_{K,\varphi})\in \SHMtfun(-Y(\varphi)).\]
\end{definition}

We prove below that $\kinvt(K)$ is well-defined:

\begin{theorem}
\label{thm:kinvt}
The element $\kinvt(K)$ is independent of the choices made in its construction.
\end{theorem}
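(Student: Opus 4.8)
The statement asserts that $\kinvt(K)$, as an element of $\KHMtfun(-Y,K,p)$, does not depend on the choice of contact embedding $\varphi$ of the standard neighborhood, nor on the auxiliary choices (the diffeomorphism $f$ in \eqref{eqn:fmapp}, and the marked contact closure used to compute the $\SHM$ contact invariant). The plan is to dispose of the auxiliary choices first and then reduce the dependence on $\varphi$ to a statement that can be checked using the naturality machinery of Sections \ref{ssec:shm} and \ref{ssec:khm} together with the properties of $\invt$ from Section \ref{sec:prelims}. The choice of $f$ is harmless: any two such diffeomorphisms $f_0,f_1:Y(\varphi)\to(Y',\Gamma')$ agree outside $N$, hence are isotopic rel the complement of $N$ (any two diffeomorphisms of a solid torus fixing a meridional disk and the boundary are isotopic rel boundary), so $(f_0)_*^{-1}(\xi')$ and $(f_1)_*^{-1}(\xi')$ are isotopic contact structures on $Y(\varphi)$; by Proposition \ref{prop:contactomorphism} they have the same contact invariant in $\SHMtfun(-Y(\varphi))$. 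The independence of the marked contact closure is exactly the content of Definition \ref{def:contactinvariantuniv}, built on Theorems \ref{thm:well-defined} and \ref{thm:well-defined2}, so $\invt(Y(\varphi),\xi_{K,\varphi})\in\SHMtfun(-Y(\varphi))$ is already well-defined.

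The remaining issue is the dependence on $\varphi$, and here I would treat it in two stages, mirroring the two-step construction of $\Psit_{\varphi,\varphi'}$ recalled in Subsection \ref{ssec:khm}. First suppose $\varphi,\varphi'$ are two contact embeddings of $(S^1\times D^2,\xi_{leg})$ with $\Img(\varphi')\subset\Img(\varphi)$, both sending $S^1\times\{0\}$ to $K$ and $\{1\}\times\{0\}$ to $p$. I want to show that $\Psit_{\varphi,\varphi'}$ sends the class $\kinvt(K,\varphi)$ to $\kinvt(K,\varphi')$. By construction $\Psit_{\varphi,\varphi'}=\SHMtfun(\bar f_1)$ for a diffeomorphism $\bar f_1:Y(\varphi)\to Y(\varphi')$ coming from an ambient isotopy $f_t$ of $Y$ supported in a solid torus neighborhood $N$ of $K$, fixing $p$, carrying $\Img(\varphi)$ to $\Img(\varphi')$ and the relevant meridional disks to each other. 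The key point is that, because uniqueness of standard neighborhoods of a Legendrian knot holds (any contact embedding of $(S^1\times D^2,\xi_{leg})$ with the prescribed image of the zero section and basepoint is ambiently contact isotopic to any other through such embeddings, after possibly shrinking), one can choose the ambient isotopy $f_t$ to be a \emph{contact} isotopy of $(Y,\xi)$; then $\bar f_1$ is a contactomorphism from the Legendrian knot complement $(Y(\varphi),\xi|_{Y(\varphi)})$ to $(Y(\varphi'),\xi|_{Y(\varphi')})$ which, moreover, is the identity outside $N$ and hence intertwines the two bypass attachments, giving a contactomorphism $(Y(\varphi),\xi_{K,\varphi})\to(Y(\varphi'),\xi_{K,\varphi'})$. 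Applying Proposition \ref{prop:contactomorphism} then yields $\SHMtfun(\bar f_1)\big(\invt(Y(\varphi),\xi_{K,\varphi})\big)\doteq\invt(Y(\varphi'),\xi_{K,\varphi'})$, i.e.\ $\Psit_{\varphi,\varphi'}(\kinvt(K,\varphi))=\kinvt(K,\varphi')$ as elements of the respective projectively transitive systems. Finally, the case of arbitrary $\varphi,\varphi'$ follows by passing to a third embedding $\varphi''$ with $\Img(\varphi'')\subset\Img(\varphi)\cap\Img(\varphi')$ and using the definition $\Psit_{\varphi,\varphi'}=(\Psit_{\varphi',\varphi''})^{-1}\circ\Psit_{\varphi,\varphi''}$ together with the two instances of the nested case just established. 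Since the $\Psit_{\varphi,\varphi'}$ are precisely the structure maps of the transitive system defining $\KHMtfun(-Y,K,p)$ (Definition \ref{def:khm}), this shows the classes $\kinvt(K,\varphi)$ assemble into a single well-defined element $\kinvt(K)$.

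I expect the main obstacle to be the step where the ambient isotopy $f_t$ must be upgraded to a contact isotopy compatible with the bypass attachment, rather than merely a smooth isotopy. One has to be a little careful that the neighborhood $N$ of $K$ on which everything is supported can be taken to be a standard (contact) neighborhood containing both $\Img(\varphi)$ and $\Img(\varphi')$, and that the bypass arc $\alpha$ and the bypass region can be arranged to be carried along by $f_t$; this is where the uniqueness of standard neighborhoods of Legendrian knots (and the fact that a bypass attachment along a prescribed arc is determined up to contact isotopy rel complement) does the real work. Once that is in place the rest is formal, drawing only on Proposition \ref{prop:contactomorphism} and the already-established naturality of $\SHMtfun$ and of the closure-independence of $\invt$.
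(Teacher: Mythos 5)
Your overall reduction (dispose of the auxiliary choices, then check compatibility with the structure maps $\Psit_{\varphi,\varphi'}$ in the nested case and formally deduce the general case) matches the paper's outline, but the two steps where something actually has to be proved both have gaps, and in both places the paper's proof runs through an ingredient you never invoke: Lemma \ref{lem:modelunique}, i.e.\ Honda's classification of tight, minimally twisting contact structures on $T^2\times[0,1]$ by their relative Euler class. First, your claim that any two choices of $f$ in \eqref{eqn:fmapp} are ``isotopic rel the complement of $N$'' is not justified: the region where they differ is $N\cap Y(\varphi)\cong T^2\times[0,1]$, whose mapping class group rel boundary is nontrivial (it contains the Dehn twists along boundary-parallel tori), so two such diffeomorphisms need not be isotopic. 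You also do not address the dependence of $\xi_{K,\varphi}$ on the specifics of the bypass attachment itself. The paper handles both at once: any two candidate $\xi_{K,\varphi}$ are contactomorphic by a contactomorphism supported in $N$, such a contactomorphism acts trivially on homology and hence preserves the relative Euler class, and Lemma \ref{lem:modelunique} then says the two contact structures are isotopic rel boundary; Proposition \ref{prop:contactomorphism} finishes.

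Second, for the dependence on $\varphi$ your plan is to upgrade the ambient isotopy $f_t$ defining $\Psit_{\varphi,\varphi'}$ to a \emph{contact} isotopy that also carries one bypass attachment to the other, and you explicitly flag this as the ``main obstacle'' without supplying an argument. That is precisely the content that needs proving: you would have to produce a contact isotopy supported in $N$, fixing $p$, carrying $\Img(\varphi)$ to $\Img(\varphi')$ \emph{and} the distinguished meridional disks to each other (so that $\SHMtfun(\bar f_1)$ really computes $\Psit_{\varphi,\varphi'}$), and then show it intertwines the bypasses. The paper avoids all of this: it keeps $f_t$ smooth, uses Proposition \ref{prop:contactomorphism} to write $\Psit_{\varphi,\varphi'}(\kinvt(K,\varphi))=\invt(Y(\varphi'),(\bar f_1)_*(\xi_{K,\varphi}))$, and then shows $(\bar f_1)_*(\xi_{K,\varphi})$ is isotopic to $\xi_{K,\varphi'}$ by observing that the two agree outside $N$ and restrict on $N\cap Y(\varphi')$ to tight, minimally twisting contact structures on $T^2\times[0,1]$ with the same dividing sets and relative Euler classes, so Lemma \ref{lem:modelunique} applies again. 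Without that lemma (or an independently proved uniqueness statement for the bypass-attached contact structure), your argument does not close; with it, the contact-isotopy upgrade you were worried about becomes unnecessary.
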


Our proof of this theorem relies on the following technical lemma. We refer the reader to \cite{honda2} for the definition of \emph{minimally twisting}.

\begin{lemma}
\label{lem:modelunique}
Suppose $\xi$ is a tight, minimally twisting contact structure on $T^2\times [0,1]$ with  dividing set consisting of two parallel curves on each boundary component, of slope $\infty$  on $T^2\times\{0\}$ and $-1$ on $T^2\times\{1\}$. Suppose $f$ is a contactomorphism of $(T^2\times[0,1],\xi)$ which restricts to the identity on $T^2\times\{1\}$ and preserves the dividing set on $T^2\times\{0\}$. Then $f_*(\xi)$ is isotopic to $\xi$ by an isotopy which is stationary on the boundary.
\end{lemma}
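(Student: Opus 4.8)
The plan is to reduce the statement to the classification of tight contact structures on $T^2\times[0,1]$ and the structure of their contactomorphism groups, following Honda's work \cite{honda2}. First I would invoke the classification of tight, minimally twisting contact structures on $T^2\times[0,1]$ with two dividing curves of slopes $\infty$ and $-1$ on the two boundary components: up to isotopy stationary on the boundary, there are exactly two such structures, distinguished by the sign of the basic slice in a minimal decomposition, and in particular $\xi$ is determined by $\xi|_{\partial}$ together with this discrete ``sign" datum. Since $f$ restricts to the identity on $T^2\times\{1\}$, preserves the dividing set on $T^2\times\{0\}$, and $f$ is a contactomorphism (hence orientation-preserving and sending $\xi$ to a tight contact structure), $f_*(\xi)$ is again tight and minimally twisting with the same boundary dividing sets. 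So it suffices to show $f_*(\xi)$ has the same sign as $\xi$, and then to upgrade ``isotopic" to ``isotopic rel boundary."

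For the sign: the sign of a basic slice is detected, for instance, by the relative Euler class of $\xi$ evaluated on a convex annulus connecting the two boundary tori, or equivalently by whether a particular bypass attachment is trivial. Because $f$ is the identity on $T^2\times\{1\}$ and preserves $\Gamma_{T^2\times\{0\}}$, the image under $f$ of such a convex annulus is again a convex annulus realizing a minimal decomposition, and its relative Euler class is carried along by $f_*$; since $f$ acts trivially on $H_*$ of the relevant surfaces with boundary (it is the identity on one end and isotopic to a map preserving the dividing curves on the other), the sign is preserved. The cleanest way to package this may be: use the edge-rounding/factoring lemma of \cite{honda2} to write $(T^2\times[0,1],\xi)$ as a stack of basic slices, observe that $f$ permutes the possible convex tori of intermediate slope, and conclude that the ordered sequence of signs, read from the $T^2\times\{1\}$ end where $f=\mathrm{id}$, is unchanged. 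Hence $f_*(\xi)$ and $\xi$ are isotopic.

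To get an isotopy \emph{stationary on the boundary}: having shown $f_*(\xi)$ and $\xi$ agree as contact structures up to isotopy, I would note that the isotopy extension / Gray stability argument can be run relative to the boundary because $f$ already fixes $\xi|_{T^2\times\{1\}}$ on the nose and fixes the characteristic foliation (or at least the dividing set, which after a $C^0$-small perturbation supported near $T^2\times\{0\}$ we may take to be the actual characteristic foliation) on $T^2\times\{0\}$; the ambiguity in the characteristic foliation on $T^2\times\{0\}$ is killed by the standard fact that any two characteristic foliations divided by the same $\Gamma$ are connected through an isotopy fixing $\Gamma$. Concretely: first isotope $f$, rel $T^2\times\{1\}$ and preserving $\Gamma_{T^2\times\{0\}}$, so that it is the identity near $\partial(T^2\times[0,1])$; then $\xi$ and $f_*(\xi)$ are two tight minimally twisting structures agreeing near the boundary with the same boundary data and the same sign, so by the rel-boundary classification in \cite{honda2} they are isotopic rel boundary, and composing isotopies gives the claim.

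The main obstacle I expect is the second half --- upgrading the isotopy to one stationary on the boundary, and in particular handling the discrepancy between ``$f$ preserves the dividing set on $T^2\times\{0\}$" and ``$f$ preserves the characteristic foliation there," since Gray stability naturally produces isotopies of contact structures, not the rel-boundary control one wants, and one must be careful that the perturbation making $\Gamma$ into an honest characteristic foliation does not interfere with tightness or with the already-fixed behavior on $T^2\times\{1\}$. The first half (sign preservation) is conceptually routine given Honda's classification, but writing down a clean invariant-theoretic proof that $f_*$ preserves the sign --- rather than an ad hoc bypass-chasing argument --- is the part I would spend the most care on.
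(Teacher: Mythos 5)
Your proposal is correct and takes essentially the same approach as the paper: both reduce to Honda's classification of the two tight, minimally twisting structures on $T^2\times[0,1]$ with this boundary data, distinguished by their relative Euler classes, and both conclude that the class is preserved because $f$ acts trivially on homology (being the identity on $T^2\times\{1\}$). The paper's proof is shorter only because it invokes the classification as already being up to isotopy stationary on the boundary, which absorbs the rel-boundary and characteristic-foliation concerns you raise in your final paragraph.
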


\begin{proof}
The contact structure $f_*(\xi)$ is certainly tight and minimally twisting. Moreover, it has the same dividing set as $\xi$. By \cite[Proposition 4.7]{honda2}, there are exactly two tight, minimally twisting contact structures on $T^2\times[0,1]$ with this dividing set, up to isotopy stationary on the boundary, distinguished by their relative Euler classes. But  $f$ acts trivially on homology since it restricts to the identity on $T^2\times\{1\}$, which implies that the relative Euler class of $\xi$ agrees with that of $f_*(\xi)$ and, therefore, that $\xi$ and $f_*(\xi)$ are isotopic.  
\end{proof}

\begin{proof}[Proof of Theorem \ref{thm:kinvt}] We first show, for a given contact embedding $\varphi$, that the contact structure $\xi_{K,\varphi}$ on $Y(\varphi)$ is well-defined up to isotopy, so that the element $\kinvt(K,\varphi)\in\SHMtfun(-Y(\varphi))$ is well-defined by Proposition \ref{prop:contactomorphism}. 

Let $N$ be a standard neighborhood of $K$ as in the definition of $\xi_{K,\varphi}$. Note that the restriction of $\xi$ to $N\cap Y(\varphi)$ is contactomorphic to a vertically invariant contact structure on $T^2\times[0,1]$ with dividing set consisting of two slope $-1$ curves on each boundary component. It follows that the restriction of $\xi_{K,\varphi}$ to $N\cap Y(\varphi)$ is contactomorphic to a contact structure as in the hypothesis of Lemma \ref{lem:modelunique}, with relative Euler class completely pinned down (see the discussion in \cite[Section 4]{sv}). The definition of $\xi_{K,\varphi}$ depends, a priori, on the specifics of the bypass attachment and on the map $f$ in \eqref{eqn:fmapp}. However, it is clear that any two such $\xi_{K,\varphi}$ are contactomorphic by a contactomorphism restricting to the identity outside of $N$. It then follows easily from Lemma \ref{lem:modelunique} that any two such $\xi_{K,\varphi}$ are isotopic by an isotopy which is stationary on $\partial Y(\varphi)$. That $\xi_{K,\varphi}$ is independent, up to isotopy, of the neighborhood $N$ follows from the fact that for any two such $N$, there is a third which is contained in both.


We have thus shown that $\kinvt(K,\varphi)$ is well-defined. To prove that $\kinvt(K)$ is well-defined, we   must show that for any two contact embeddings $\varphi,\varphi'$,  the canonical isomorphism \[\Psit_{\varphi,\varphi'}:\SHMtfun(-Y(\varphi))\to\SHMtfun(-Y(\varphi'))\] sends $\kinvt(K,\varphi)$ to $\kinvt(K,\varphi').$ It suffices to prove this in the case that $\Img(\varphi')\subset\Img(\varphi)$. Let $N$ be a standard neighborhood of $K$ with $\Img(\varphi)\subset \inr(N)$, as above. Let $f_t$ be an ambient isotopy of $Y$, supported in $N$, of the sort used to define the map $\Psit_{\varphi,\varphi'}$. Then \[\Psit_{\varphi,\varphi'}(\kinvt(K,\varphi)):=\SHMtfun(\bar f_1)( \invt (Y(\varphi),\xi_{K,\varphi}))= \invt(Y(\varphi'), (\bar f_1)_*(\xi_{K,\varphi})),\] where the second equality is by Proposition \ref{prop:contactomorphism}. We therefore wish to show that \[\invt(Y(\varphi'), (\bar f_1)_*(\xi_{K,\varphi}))=\invt(Y(\varphi'), \xi_{K,\varphi'})=:\kinvt(K,\varphi').\] It suffices to show that $(\bar f_1)_*(\xi_{K,\varphi})$ is isotopic to $\xi_{K,\varphi'}$. But this follows from Lemma \ref{lem:modelunique} since these contact structures agree outside of $N$ and are contactomorphic on $N\cap Y(\varphi')$ to contact structures on $T^2\times[0,1]$ as in the hypothesis of the lemma, with the same relative Euler classes.

This completes the proof of Theorem \ref{thm:kinvt}.
\end{proof}

\subsection{A transverse invariant} 
\label{ssec:legprop}

Here, we prove some basic properties of the Legendrian invariant $\kinvt$ and use some of these to define the invariant of transverse knots mentioned in the introduction. 
 
Our first result is that the Legendrian invariant  behaves functorially with respect to contactomorphism in the following sense.

\begin{proposition}
\label{prop:Lfunct}
Suppose $(K,p)\subset (Y,\xi)$ and $(K',p')\subset (Y',\xi')$ are based, oriented Legendrian knots and $f:(Y,\xi)\to(Y',\xi')$ is a contactomorphism sending $(K,p)$ to $(K',p')$. Then \[\KHMtfun(f)(\kinvt(K))=\kinvt(K').\] 
\end{proposition}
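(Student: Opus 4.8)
The plan is to chase the definitions through the naturality of $\KHMtfun$ established in Section \ref{ssec:khm} together with the contactomorphism invariance of the $\SHM$ contact invariant (Proposition \ref{prop:contactomorphism}). First I would fix a contact embedding $\varphi:(S^1\times D^2,\xi_{leg})\to(Y,\xi)$ adapted to $(K,p)$ as in Definition \ref{def:legsv}, and observe that $\varphi':=f\circ\varphi:(S^1\times D^2,\xi_{leg})\to(Y',\xi')$ is then a contact embedding adapted to $(K',p')$, since $f$ is a contactomorphism sending $K$ to $K'$ and $p$ to $p'$. The diffeomorphism $f$ restricts to a diffeomorphism of balanced sutured manifolds $\bar f:Y(\varphi)\to Y'(\varphi')$, and by definition $\KHMtfun(f)$ is the isomorphism determined by $\SHMtfun(\bar f):\SHMtfun(Y(\varphi))\to\SHMtfun(Y'(\varphi'))$ (and, passing to $-Y$, by $\SHMtfun(\bar f):\SHMtfun(-Y(\varphi))\to\SHMtfun(-Y'(\varphi'))$).

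The key step is then to identify how $\bar f$ transports the contact structure $\xi_{K,\varphi}$ used to define $\kinvt(K,\varphi)$. I would argue that $\bar f_*(\xi_{K,\varphi})$ is isotopic to $\xi_{K',\varphi'}$. The point is that $\xi_{K,\varphi}$ is obtained from $\xi|_{Y(\varphi)}$ by a bypass attachment along the arc $\alpha\subset\partial Y(\varphi)$ of Figure \ref{fig:bypassvs}, followed by the identification $f$ of \eqref{eqn:fmapp}; applying the contactomorphism $\bar f$ carries this whole local modification to the corresponding bypass attachment along $\bar f(\alpha)\subset\partial Y'(\varphi')$, which is (isotopic to) the arc used to construct $\xi_{K',\varphi'}$, because the meridian–longitude coordinates on $\partial(S^1\times D^2)$ are preserved by $\varphi'=f\circ\varphi$. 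Any discrepancy in the choice of arc, in the precise bypass, or in the diffeomorphism realizing the identification is absorbed exactly as in the proof of Theorem \ref{thm:kinvt}: the two contact structures agree outside a standard neighborhood $N$ of $K'$ and are contactomorphic on $N\cap Y'(\varphi')$ to tight, minimally twisting contact structures on $T^2\times[0,1]$ with the same dividing sets and relative Euler classes, so Lemma \ref{lem:modelunique} forces them to be isotopic rel boundary.

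Granting this, the computation is a one-line diagram chase:
\[
\KHMtfun(f)(\kinvt(K)) = \SHMtfun(\bar f)\big(\invt(Y(\varphi),\xi_{K,\varphi})\big) = \invt\big(Y'(\varphi'),\bar f_*(\xi_{K,\varphi})\big) = \invt\big(Y'(\varphi'),\xi_{K',\varphi'}\big) = \kinvt(K'),
\]
where the second equality is Proposition \ref{prop:contactomorphism} applied to the contactomorphism $\bar f$, the third is the isotopy just established together with Proposition \ref{prop:contactomorphism} (isotopic contact structures have equal invariants), and the outer equalities are Definition \ref{def:legsv}. Since $\kinvt(K)$ and $\kinvt(K')$ are independent of $\varphi$ and $\varphi'$ by Theorem \ref{thm:kinvt}, and $\KHMtfun(f)$ is independent of the chosen neighborhood by the commuting square in Section \ref{ssec:khm}, this suffices.

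I expect the main obstacle to be the bookkeeping in the second paragraph: verifying carefully that a contactomorphism carries a bypass attachment along $\alpha$ to the bypass attachment along $\bar f(\alpha)$, and that the resulting contact structure on the knot complement matches $\xi_{K',\varphi'}$ up to the ambiguity controlled by Lemma \ref{lem:modelunique}. Once one is comfortable that bypass attachment is natural under contactomorphism and that Lemma \ref{lem:modelunique} pins down the relevant model, the rest is formal.
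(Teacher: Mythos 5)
Your proposal is correct and follows essentially the same route as the paper's proof: fix $\varphi$ for $K$, observe that $f\circ\varphi$ is an admissible embedding for $K'$, reduce to showing $\SHMtfun(\bar f)$ carries $\kinvt(K,\varphi)$ to $\kinvt(K',f\circ\varphi)$ via Proposition \ref{prop:contactomorphism}, and conclude by checking that $(\bar f)_*(\xi_{K,\varphi})$ is isotopic to $\xi_{K',f\circ\varphi}$. The paper dispatches that last isotopy with the single remark that it ``follows from the fact that $f$ is a contactomorphism,'' whereas you spell out the underlying reason (naturality of the bypass attachment under contactomorphism plus the Lemma \ref{lem:modelunique} rigidity already used in Theorem \ref{thm:kinvt}); this is a faithful and slightly more detailed rendering of the same argument.
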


\begin{proof}
Let $\varphi$ be a contact embedding of the sort used to define $\kinvt(K)$, and let $N$ be a standard neighborhood of $K$ with $\Img(\varphi)\subset \inr(N)$. The fact that $f$ is a contactomorphism implies that $f\circ \varphi$ is a contact embedding of the sort used to define $\kinvt(K')$. It suffices to prove that  \[\SHMtfun(\bar f):\SHMtfun(-Y(\varphi))\to\SHMtfun(-Y'(f\circ\varphi))\] sends $\kinvt(K,\varphi)$ to $\kinvt(K',f\circ\varphi).$ We have that \[\SHMtfun(\bar f)(\kinvt(K,\varphi)):=\SHMtfun(\bar f)(\invt(Y(\varphi),\xi_{K,\varphi})) = \invt(Y'(f\circ \varphi),(\bar f)_*(\xi_{K,\varphi})).\]  We therefore wish to show that \[\invt(Y'(f\circ \varphi),(\bar f)_*(\xi_{K,\varphi}))=\invt(Y'(f\circ \varphi),\xi_{K',f\circ\varphi})):=\kinvt(K',f\circ\varphi).\] It suffices to show that $(\bar f)_*(\xi_{K,\varphi})$ and $\xi_{K',f\circ\varphi}$ are isotopic. But this follows  from the fact that $f$ is a contactomorphism.
\end{proof}

\begin{corollary}
\label{cor:Liso}
If $(K,p)$ and $(K',p')$ are Legendrian isotopic knots  in $ (Y,\xi)$, then there exists an isomorphism \[\KHMtfun(-Y,K,p)\to\KHMtfun(-Y,K',p')\] which sends $\kinvt(K)$ to $\kinvt(K')$.\qed
\end{corollary}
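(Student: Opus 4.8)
\emph{Proof sketch.} The plan is to reduce the corollary to Proposition \ref{prop:Lfunct} by promoting the Legendrian isotopy to an ambient contact isotopy. First I would invoke the Legendrian isotopy extension theorem: a Legendrian isotopy carrying $(K,p)$ to $(K',p')$ in $(Y,\xi)$ extends to an ambient contact isotopy $\{\phi_t\}_{t\in[0,1]}$ of $(Y,\xi)$ with $\phi_0 = \mathrm{id}_Y$, with each $\phi_t$ a contactomorphism of $(Y,\xi)$, and with $\phi_1(K) = K'$ and $\phi_1(p) = p'$; the basepoint may be tracked along the given Legendrian isotopy, so this costs nothing. In particular, $\phi_1$ is a contactomorphism of $(Y,\xi)$ sending the based, oriented Legendrian knot $(K,p)$ to $(K',p')$.

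Next I would apply Proposition \ref{prop:Lfunct} with $f=\phi_1$. This gives $\KHMtfun(\phi_1)(\kinvt(K))=\kinvt(K')$, where $\KHMtfun(\phi_1)\colon \KHMtfun(-Y,K,p)\to\KHMtfun(-Y,K',p')$ is the isomorphism induced by the based diffeomorphism $\phi_1$ via the functoriality of monopole knot homology recalled in Subsection \ref{ssec:khm}. Taking this map as the asserted isomorphism completes the argument.

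The only step that requires any attention is the first one, namely the passage from a Legendrian isotopy to an ambient contact isotopy; this is the standard Legendrian isotopy extension theorem, and I do not anticipate any genuine obstacle there. Note that I do not attempt to show the resulting isomorphism is canonical: the sharper statement, that the map $\Psi_{f_t}$ depends only on the Legendrian isotopy class of $f_t$, would require incorporating the naturality machinery of \cite{bs3}, but the existence assertion in the corollary follows at once from the above.
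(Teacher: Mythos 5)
Your proposal is correct and is exactly the argument the paper intends: the corollary is stated with a \qed because it follows immediately from Proposition \ref{prop:Lfunct} once the Legendrian isotopy is extended to an ambient contact isotopy of $(Y,\xi)$, whose time-one map is a contactomorphism carrying $(K,p)$ to $(K',p')$. Your remark that you only establish existence of the isomorphism, not its canonicity, correctly identifies the scope of the corollary as stated.
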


The following will be important in the construction of our transverse invariant. 

\begin{proposition}
\label{prop:anyf}
Suppose $(K,p)$ and $(K',p')$ are based, oriented Legendrian knots in $(Y,\xi)$ such that there exists a contactomorphism of $(Y,\xi)$ which sends one to the other and restricts to the identity outside of  a  tubular neighborhood  $N$ of both knots. Then, for any diffeomorphism $f$ of $Y$ which restricts to the identity outside of $N$ and sends $(K,p)$ to $(K',p')$, \[\KHMtfun(f)(\kinvt(K))=\kinvt(K').\]
\end{proposition}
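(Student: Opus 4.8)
The plan is to reduce Proposition \ref{prop:anyf} to Proposition \ref{prop:Lfunct} by showing that an \emph{arbitrary} diffeomorphism $f$ supported in $N$ and sending $(K,p)$ to $(K',p')$ induces the same map on $\KHMtfun$ as the given contactomorphism $g$ with the same support and endpoint behavior. Concretely, first I would invoke Proposition \ref{prop:Lfunct} applied to $g$: this gives $\KHMtfun(g)(\kinvt(K)) = \kinvt(K')$. So it suffices to prove that $\KHMtfun(f) = \KHMtfun(g)$ as morphisms $\KHMtfun(Y,K,p)\to\KHMtfun(Y,K',p')$, or at least that they agree on the class $\kinvt(K)$.

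The key point is that $f$ and $g$ agree outside $N$ and both send $K$ to $K'$ and $p$ to $p'$, so $g^{-1}\circ f$ is a diffeomorphism of $Y$ supported in $N$ fixing $(K,p)$; hence it restricts to a diffeomorphism $\overline{g^{-1}\circ f}$ of the sutured knot complement $Y(\varphi)$ (for a suitable tubular neighborhood $\varphi$ with $\Img(\varphi)\subset\inr(N)$) which is supported in $N\cap Y(\varphi)$. By the functoriality of $\KHMtfun$ under based diffeomorphisms and its dependence only on isotopy classes (see the discussion following Definition \ref{def:khm}, i.e.\ \cite[Theorem 8.5]{bs3}), it is enough to show that $\overline{g^{-1}\circ f}$ is isotopic, rel $\partial$, to the identity on $Y(\varphi)$ --- equivalently, that $g^{-1}\circ f$ is isotopic to the identity through diffeomorphisms supported in $N$. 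Now $N\cap Y(\varphi)$ is diffeomorphic to $T^2\times[0,1]$, and $g^{-1}\circ f$ restricts there to a diffeomorphism which is the identity on one boundary torus and preserves the isotopy class of the meridian on the other. The main obstacle is to rule out that this restriction is a nontrivial mapping class of $T^2\times[0,1]$ rel the outer boundary: such mapping classes are generated by Dehn twists along $T^2\times\{t\}$ about the curve fixed on the outer boundary, i.e.\ the meridian. But a Dehn twist about the meridian changes the framing of $K$, hence would not fix the contact-geometric data in the way $g$ does, and more to the point one can arrange (by composing $f$ with such a twist, which does not change that $f$ is supported in $N$ and sends $(K,p)\mapsto(K',p')$) that no such twisting occurs; alternatively, since both $f$ and $g$ extend to honest diffeomorphisms of the solid torus $N$ fixing the core $K$ pointwise near $p$, the restriction to $N\cap Y(\varphi)\cong T^2\times[0,1]$ automatically lies in the subgroup of the mapping class group that extends over the solid torus on the $K$-side, which kills exactly the meridional twist. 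So $\overline{g^{-1}\circ f}$ is isotopic rel $\partial$ to the identity.

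Combining these steps: $\KHMtfun(f)(\kinvt(K)) = \KHMtfun(g)(\kinvt(K)) = \kinvt(K')$, where the first equality uses $\KHMtfun(f)=\KHMtfun(g\circ(g^{-1}f)) = \KHMtfun(g)\circ\KHMtfun(g^{-1}f)$ together with $\KHMtfun(g^{-1}f)=\mathrm{id}$ (since $g^{-1}f$ is isotopic rel the relevant data to the identity, so $\overline{g^{-1}f}\simeq \mathrm{id}$), and the second equality is Proposition \ref{prop:Lfunct}. I expect the mapping-class-group bookkeeping for $T^2\times[0,1]$ --- precisely pinning down why no meridional Dehn twist intervenes --- to be the only genuinely delicate point; everything else is a formal consequence of the functoriality package for $\KHMtfun$ established in \cite{bs3} and already recalled in Section \ref{sec:prelims}.
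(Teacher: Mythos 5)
Your overall skeleton matches the paper's: apply Proposition \ref{prop:Lfunct} to the given contactomorphism and then compare it with the arbitrary diffeomorphism $f$. The gap is in the comparison step, and it sits exactly where you flag the argument as delicate. The claim that $\overline{g^{-1}\circ f}$ is isotopic to the identity cannot be established this way, and in the relevant sense it is false. Take $f=g\circ\tau$, where $\tau$ is a twist of $N\cong S^1\times D^2$ along a meridian disk, damped so as to be the identity on $\partial N$ and on the core; this $f$ is admissible for the proposition. But $\tau$ changes the framing of $K$ by $\pm 1$, so it is \emph{not} isotopic to the identity through diffeomorphisms of $(Y,K,p)$ (the framing is an invariant of such isotopies), and $\bar{\tau}$ acts on $N\cap Y(\varphi)\cong T^2\times[0,1]$ as a meridional Dehn twist, a nontrivial mapping class rel boundary. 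Your two proposed fixes do not work: composing $f$ with a twist changes $f$ and begs the question of why the twist acts trivially on $\KHMtfun$; and the subgroup of mapping classes of $T^2\times[0,1]$ that extend over the inner solid torus \emph{contains} the meridional twist rather than excluding it --- the disk twist of $S^1\times D^2$ restricts to precisely that twist on a boundary collar.

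The paper escapes at this point by proving a weaker statement that suffices. It reduces, much as you do, to a single composite diffeomorphism $F$ of $Y(\varphi)$ supported in $N$, but instead of trying to show $F$ is isotopic to the identity it shows that $F_*(\xi_{K,\varphi})$ is \emph{isotopic to} $\xi_{K,\varphi}$, whence $\invt(Y(\varphi),F_*(\xi_{K,\varphi}))=\invt(Y(\varphi),\xi_{K,\varphi})$ by Proposition \ref{prop:contactomorphism}. The mechanism is Lemma \ref{lem:modelunique}: on the thickened torus $N_1\cap Y(\varphi)$ the contact structure is tight and minimally twisting, Honda's classification determines such structures up to isotopy rel boundary by their dividing set and relative Euler class, and $F$ preserves both, since it is the identity on one boundary torus and hence acts trivially on homology. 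In particular a meridional Dehn twist, though a nontrivial mapping class, preserves the isotopy class of $\xi_{K,\varphi}$, and that is all that is needed. To repair your argument, replace ``$\overline{g^{-1}\circ f}$ is isotopic to the identity'' by ``the relevant composite pushes $\xi_{K,\varphi}$ forward to an isotopic contact structure'' and invoke Lemma \ref{lem:modelunique}; as written, the key step fails.
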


\begin{proof}
Let $h$ be a contactomorphism of $(Y,\xi)$ which sends $(K,p)$ to $(K',p')$ and restricts to the identity outside of $N$, whose existence is guaranteed in the hypothesis of the proposition. Let $\varphi$ be a contact embedding of the sort used to define $\kinvt(K)$, and let $N_1$ be a standard neighborhood of $K$ with $\Img(\varphi)\subset \inr(N_1)$ and $N_1\subset N$. Let $\varphi'$ be a contact embedding of the sort used to define $\kinvt(K')$ with $\Img(\varphi')\subset \Img(h\circ\varphi)\cap \Img(f\circ\varphi)$. It suffices to prove that \[\Psit_{f\circ\varphi,\varphi'}\circ\SHMtfun(\bar f):\SHMtfun(-Y(\varphi))\to\SHMtfun(-Y(\varphi'))\] sends $\kinvt(K,\varphi)$ to $\kinvt(K',\varphi').$ Let $g_t$ be an ambient isotopy of $Y$, supported in $N$, of the sort used to define $\Psit_{f\circ\varphi,\varphi'}$. Then \[\Psit_{f\circ\varphi,\varphi'}(\SHMtfun(\bar f)(\kinvt(K,\varphi))):=\SHMtfun(\bar g_1\circ \bar f)(\invt(Y(\varphi),\xi_{K,\varphi}))=\invt(Y(\varphi'),(\bar g_1\circ \bar f)_*(\xi_{K,\varphi})).\] We therefore wish to show that \[\invt(Y(\varphi'),(\bar g_1\circ \bar f)_*(\xi_{K,\varphi}))=\invt(Y(\varphi'),\xi_{K',\varphi'}):=\kinvt(K',\varphi').\] For this, it suffices to show that $(\bar g_1\circ \bar f)_*(\xi_{K,\varphi})$ is isotopic to $\xi_{K',\varphi'}$. To prove this, we first note that \[\Psit_{h\circ \varphi,\varphi'}(\SHMtfun(\bar h)(\kinvt(K,\varphi))) = \kinvt(K',\varphi')\] by Proposition \ref{prop:Lfunct} and the proof of Theorem \ref{thm:kinvt}. In fact, more is true. Let $u_t$ be an ambient isotopy of $Y$, supported in $N$, of the sort used to define $\Psit_{h\circ \varphi,\varphi'}$. It follows from the proofs of Proposition \ref{prop:Lfunct} and Theorem \ref{thm:kinvt} that $(u_1\circ \bar h)_*(\xi_{K,\varphi})$ is isotopic to $\xi_{K',\varphi'}$. So, to show that $(\bar g_1\circ \bar f)_*(\xi_{K,\varphi})$ is isotopic to $\xi_{K',\varphi'}$, it suffices to prove that \[((\bar g_1\circ \bar f)^{-1}\circ u_1\circ \bar h)_*(\xi_{K,\varphi})\] is isotopic to $\xi_{K,\varphi}$. More generally, one can show, for any diffeomorphism $F$ of $Y(\varphi)$ which restricts to the identity outside of $N$, that $F_*(\xi_{K,\varphi})$ is isotopic to $\xi_{K,\varphi}$. Indeed, up to isotopy,  $F$ restricts to the identity on $N\ssm N_1$. Then one only need compare the restrictions of $F_*(\xi_{K,\varphi})$ and $\xi_{K,\varphi}$ to the thickened torus $N_1\cap Y(\varphi)$. Lemma \ref{lem:modelunique} shows that these restrictions are isotopic by an isotopy which is stationary on $\partial N$ and thus extends to all of $Y(\varphi)$.
\end{proof}

Given a Legendrian knot $K$,  we  will denote by $K_+$ and $K_-$ its \emph{positive} and \emph{negative stabilizations}. These are certain Legendrian knots contained in a standard neighborhood of $K$ which are smoothly isotopic to $K$ in said neighborhood; see \cite{etnyre,efm} for more details. The  result below is that positive stabilization kills $\kinvt$ while negative stabilization preserves it. 

\begin{theorem}
\label{thm:natstab}
Suppose $(K,p)$ is a based, oriented Legendrian knot in $(Y,\xi)$. Let  $(K_+,p_+)$ and $(K_-,p_-)$ be stabilizations of $K$ contained within a standard neighborhood $N$ of $K$. Then $\kinvt(K_+)=0$. On the other hand,  $\kinvt(K_-)$ agrees with $\kinvt(K)$ in the sense that for any diffeomorphism $f$ of $Y$ which restricts to the identity outside of $N$ and sends $(K,p)$ to $(K_-,p_-)$,   \[\KHMtfun(f)(\kinvt(K))=\kinvt(K_-).\]
\end{theorem}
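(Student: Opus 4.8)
The plan is to reduce both halves of the statement to known facts about bypass attachments and to the naturality machinery established for $\kinvt$. The key observation is that the sutured contact manifold used to define $\kinvt(K)$ is built from the Legendrian knot complement by attaching a single bypass along the arc $\alpha$ of Figure \ref{fig:bypassvs}, so the effect of stabilizing $K$ should be visible as a modification of the bypass picture inside the thickened torus $N \cap Y(\varphi)$.

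For the positive stabilization statement, I would first recall (from \cite{sv}, and ultimately \cite{honda2}) the standard description of the standard neighborhood of $K_+$ sitting inside $N$: the complement $N \setminus \nu(K_+)$ carries a tight contact structure on $T^2 \times [0,1]$ whose dividing slopes interpolate between those of $\partial N$ and the meridian-plus-longitude picture appropriate to $K_+$, and the difference between $K_+$ and $K_-$ is recorded by the sign of the basic slice (equivalently, by the relative Euler class). The point is that attaching the bypass along $\alpha$ to the complement of $K_+$ produces a contact structure on $Y(\varphi)$ which is \emph{overtwisted}: the positive stabilization followed by the bypass attachment creates an overtwisted disk, exactly as in the Heegaard Floer argument of Stipsicz and V\'ertesi where the analogous $EH$ class vanishes. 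Once I have identified $\xi_{K_+,\varphi}$ (or rather the relevant $\xi_{K_+}$ built analogously) as overtwisted, $\kinvt(K_+) = 0$ follows immediately from Proposition \ref{prop:ot}. The main content here is the geometric claim that positive stabilization plus this particular bypass yields overtwistedness; I would cite the corresponding lemma from \cite{sv} and transport it, since our $\xi_{K,\varphi}$ is modeled on their construction by design.

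For the negative stabilization statement, I would argue that $\xi_{K_-}$ and $\xi_K$ are "the same" in the precise sense needed. Choose a contact embedding $\varphi$ defining $\kinvt(K)$ with $\Img(\varphi) \subset \inr(N_1) \subset N_1 \subset N$ for a standard neighborhood $N_1$; then $K_-$ also has a standard neighborhood inside $N_1$, and by the classification of tight contact structures on $T^2\times[0,1]$ (Honda), the complement of this neighborhood of $K_-$ inside $N_1$ is contactomorphic, rel the outer boundary, to the complement of the original neighborhood of $K$ — this is precisely the statement that negative stabilization does not change the relevant basic-slice decomposition once we are above the dividing slope $-1$, whereas positive stabilization does. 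Consequently there is a contactomorphism of $(Y,\xi)$ sending $(K,p)$ to $(K_-,p_-)$ and restricting to the identity outside $N$. At this point Proposition \ref{prop:anyf} applies verbatim: for \emph{any} diffeomorphism $f$ of $Y$ that is the identity outside $N$ and sends $(K,p)$ to $(K_-,p_-)$, we get $\KHMtfun(f)(\kinvt(K)) = \kinvt(K_-)$, which is exactly the asserted equality.

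The step I expect to be the main obstacle is establishing the geometric input for the negative-stabilization case, namely producing the contactomorphism of $(Y,\xi)$ carrying $K$ to $K_-$ supported in $N$. This requires knowing that, after removing standard neighborhoods, the "extra" basic slice introduced by a negative stabilization can be absorbed by an isotopy of contact structures on the intervening $T^2 \times [0,1]$ with the prescribed dividing set, while for a positive stabilization it genuinely changes the relative Euler class and instead forces overtwistedness after the bypass — this is the asymmetry between $K_+$ and $K_-$ that makes the theorem work, and it is the place where I would lean hardest on the analysis in \cite[Section 4]{sv} and the convex-surface technology of \cite{honda2}. Everything after that point is a formal consequence of Propositions \ref{prop:ot} and \ref{prop:anyf} together with Lemma \ref{lem:modelunique}, which we have already proved.
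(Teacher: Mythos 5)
Your treatment of $K_+$ is exactly the paper's: Stipsicz and V{\'e}rtesi show that the bypass attachment of Figure \ref{fig:bypassvs} applied to the complement of a standard neighborhood of $K_+$ yields an overtwisted contact structure, and Proposition \ref{prop:ot} finishes. The negative-stabilization half, however, has a genuine gap. You assert that there is a contactomorphism of $(Y,\xi)$ itself, supported in $N$, carrying $(K,p)$ to $(K_-,p_-)$, and you then invoke Proposition \ref{prop:anyf} verbatim. No such contactomorphism can exist: $tb(K_-)=tb(K)-1$, and the Thurston--Bennequin invariant (equivalently, the contact framing relative to the framing determined by $\partial N$) is preserved by any contactomorphism supported in $N$. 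Relatedly, your intermediate claim that the complement of a standard neighborhood of $K_-$ inside $N_1$ is contactomorphic rel the outer boundary to the complement of the standard neighborhood of $K$ cannot be right as stated, since the dividing slopes on the inner boundary tori differ. So the hypothesis of Proposition \ref{prop:anyf} is never satisfied, and that proposition cannot be applied directly.

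The fact that actually carries the argument --- and the one the paper uses --- is weaker and lives one level down: by \cite{sv}, the contactomorphism exists only \emph{after} the bypass attachment, i.e.\ there is a contactomorphism $h\colon (Y(\varphi),\xi_{K,\varphi})\to(Y(\varphi_-),\xi_{K_-,\varphi_-})$ of sutured contact manifolds restricting to the identity outside $N$; it is the bypass that absorbs the extra negative basic slice. One then cannot cite Proposition \ref{prop:anyf}, but one reruns the final step of its proof: reduce to showing that $(\bar g_1\circ\bar f)_*(\xi_{K,\varphi})$ is isotopic to $\xi_{K_-,\varphi_-}$, precompose with $h$ to reduce further to showing that $F_*(\xi_{K,\varphi})$ is isotopic to $\xi_{K,\varphi}$ for a diffeomorphism $F$ of $Y(\varphi)$ supported in $N$, and conclude with Lemma \ref{lem:modelunique}. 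Your instinct that the asymmetry between $K_+$ and $K_-$ is governed by the basic-slice and relative Euler class analysis of \cite{sv} is correct; the error is in where the contactomorphism lives, and fixing it requires repeating the argument of Proposition \ref{prop:anyf} rather than quoting its statement.
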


\begin{proof}
In \cite{sv}, Stipsicz and V{\'e}rtesi show that if one removes a standard neighborhood of $K_+$ from $(Y,\xi)$ and  attaches a bypass to the complement as in Figure \ref{fig:bypassvs}, then the result is overtwisted. It follows that $\kinvt(K_+)=0$.

The statement about $\kinvt(K_-)$  follows from the fact, shown in \cite{sv}, that the result of attaching a bypass to the complement of a standard neighborhood of $K$, as prescribed in Figure \ref{fig:bypassvs}, is contactomorphic to the analogous construction for $K_-$. To make this precise, let $\varphi$ be a contact embedding of the sort used to define $\kinvt(K)$ with $\Img(\varphi)\subset \inr(N)$. Let $\varphi_-$ be a contact embedding of the sort used to define $\kinvt(K_-)$ with $\Img(\varphi_-)\subset  \Img(f\circ\varphi)$. It suffices to prove that \[\Psit_{f\circ\varphi,\varphi_-}\circ\SHMtfun(\bar f):\SHMtfun(-Y(\varphi))\to\SHMtfun(-Y(\varphi_-))\] sends $\kinvt(K,\varphi)$ to $\kinvt(K_-,\varphi_-).$ Let $g_t$ be an ambient isotopy of $Y$, supported in $N$, of the sort used to define $\Psit_{f\circ\varphi,\varphi_-}$. Then, just as in the proof of Proposition \ref{prop:anyf}, it suffices to prove that  $(\bar g_1\circ \bar f)_*(\xi_{K,\varphi})$ is isotopic to $\xi_{K_-,\varphi_-}$. It follows from Stipsicz and V{\'e}rtesi's work in \cite{sv} that there exists a contactomorphism \[h:(Y(\varphi),\xi_{K,\varphi})\to(Y(\varphi_-),\xi_{K_-,\varphi_-})\] which restricts to the identity outside of $N$. So, to show that $(\bar g_1\circ \bar f)_*(\xi_{K,\varphi})$ is isotopic to $\xi_{K_-,\varphi_-}$, it suffices to prove that \[((\bar g_1\circ \bar f)^{-1}\circ  h)_*(\xi_{K,\varphi})\] is isotopic to $\xi_{K,\varphi}$. But this follows from the argument  at the end of the proof of Proposition \ref{prop:anyf}.
\end{proof}

The behavior of $\kinvt$ under negative stabilization means that we can use it to define an invariant of transverse knots via Legendrian approximation as described below. 

Consider the coordinates $(\theta, (r, \phi))$ on $S^1\times D^2$, where $\theta\in [0,2\pi]/(0\sim 2\pi)$ is identified with  $e^{i\theta}\in S^1$ and $(r,\phi)$ is identified with $re^{i\phi}\in D^2$. For $\delta >0$, let $\xi_{\delta}$ be the contact structure on $S^1\times D^2$ given by \[\xi_{\delta} := \ker(d\theta + \delta r^2d\phi)\] in these coordinates. Every  transverse knot has a  neighborhood contactomorphic to $(S^1\times D^2,\xi_{\delta})$ for some $\delta$; such a neighborhood  is called a \emph{standard} neighborhood.

Suppose $(K,p)$ is a based, oriented transverse knot in $(Y,\xi)$. We define the transverse invariant $\tinvt(K)\in\KHMtfun(-Y,K,p)$ as follows (see \cite{etnyre} for the definition of \emph{Legendrian pushoff}).

\begin{definition} Let $N$ be a standard neighborhood of $(K,p)$ and choose a Legendrian pushoff $(K',p')\subset N$ of $K$. Let $f$ be a diffeomorphism of $Y$ which restricts to the identity outside of $N$ and sends $(K',p')$ to $(K,p)$. We define \[\tinvt(K):=\KHMtfun(f)(\kinvt(K'))\in\KHMtfun(-Y,K,p).\]
\end{definition}

We prove below that $\tinvt(K)$ is well-defined:

\begin{theorem}
The element $\tinvt(K)$ is independent of the choices made in its construction.
\end{theorem}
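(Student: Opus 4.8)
The plan is to show that the two sources of ambiguity in the definition of $\tinvt(K)$—the choice of standard neighborhood $N$ together with the Legendrian pushoff $(K',p')\subset N$, and the choice of the diffeomorphism $f$ supported in $N$ sending $(K',p')$ to $(K,p)$—do not affect the resulting element of $\KHMtfun(-Y,K,p)$. I would first dispose of the dependence on $f$: if $f$ and $f'$ are two diffeomorphisms of $Y$ restricting to the identity outside $N$ and both sending $(K',p')$ to $(K,p)$, then $f'\circ f^{-1}$ is a diffeomorphism of $Y$ fixing $(K,p)$ and supported in $N$. By Proposition \ref{prop:anyf}, applied with $K'=K$ (there is trivially a contactomorphism, namely the identity, sending $K$ to itself and supported in $N$), we get $\KHMtfun(f'\circ f^{-1})(\kinvt(K))=\kinvt(K)$; combined with functoriality of $\KHMtfun$ under composition this forces $\KHMtfun(f)(\kinvt(K'))=\KHMtfun(f')(\kinvt(K'))$, so $\tinvt(K)$ is independent of $f$ once $N$ and $(K',p')$ are fixed. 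Strictly speaking one should apply Proposition \ref{prop:anyf} to $f'\circ f^{-1}$ directly, using that it sends $(K,p)$ to $(K,p)$ and restricts to the identity outside $N$.

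Next I would handle the choice of Legendrian pushoff inside a fixed standard neighborhood $N$. Here I invoke the standard fact (see \cite{etnyre,efm}) that any two Legendrian pushoffs of a transverse knot $K$ are Legendrian isotopic inside $N$ after possibly performing negative stabilizations on one of them; equivalently, any two such pushoffs $K_1'$ and $K_2'$ are related, within $N$, by a sequence of negative stabilizations and Legendrian isotopies. Theorem \ref{thm:natstab} says negative stabilization preserves $\kinvt$ in the appropriate naturality sense (it is identified with $\kinvt(K)$ under $\KHMtfun(f)$ for $f$ supported in a standard neighborhood), and Corollary \ref{cor:Liso} (refined to Proposition \ref{prop:anyf} when we need the isotopy supported in $N$) says Legendrian isotopy preserves $\kinvt$ compatibly with the natural maps. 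Chaining these, and using Proposition \ref{prop:anyf} to keep all identifications supported in $N$, shows that the element $\KHMtfun(f)(\kinvt(K'))\in\KHMtfun(-Y,K,p)$ does not depend on which pushoff $(K',p')\subset N$ we chose. The key point is that every identification used is realized by a diffeomorphism supported in $N$, so the compositions all take place over the same identification of $\SHMtfun(-Y(\varphi))$-type groups and no new ambiguity is introduced.

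Finally I would remove the dependence on the standard neighborhood $N$ itself. Given two standard neighborhoods $N_1,N_2$ of $(K,p)$, there is a third standard neighborhood $N_3$ with $N_3\subset N_1\cap N_2$ (any sufficiently small standard neighborhood works). A Legendrian pushoff $(K',p')$ chosen inside $N_3$ is simultaneously a valid pushoff for $N_1$ and for $N_2$, and a diffeomorphism $f$ supported in $N_3$ sending $(K',p')$ to $(K,p)$ is supported in $N_1$ and in $N_2$ as well. Hence the element $\KHMtfun(f)(\kinvt(K'))$ computed using $N_3$ equals the one computed using $N_1$ and the one computed using $N_2$, by the $N$-independence and $f$-independence established in the previous two paragraphs. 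This shows $\tinvt(K)$ is well-defined.

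I expect the main obstacle to be purely bookkeeping: making sure that at each stage the relevant diffeomorphisms and contactomorphisms are supported inside a single fixed standard neighborhood, so that the canonical identification maps $\Psit_{\varphi,\varphi'}$ between the various $\SHMtfun(-Y(\varphi))$ match up exactly and no $\doteq$-ambiguity or reindexing slips in. The substantive inputs—that negative stabilization preserves $\kinvt$ (Theorem \ref{thm:natstab}), that $\kinvt$ is natural under contactomorphisms supported in $N$ (Proposition \ref{prop:anyf}), and that pushoffs are unique up to negative stabilization and Legendrian isotopy \cite{efm}—have all been established already, so no new geometric content is required; the proof is essentially an assembly of these facts.
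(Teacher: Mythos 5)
Your proposal is correct and follows essentially the same route as the paper: independence of $f$ via Proposition \ref{prop:anyf} applied to a composite diffeomorphism fixing the knot, independence of the pushoff via the \cite{efm} common-negative-stabilization fact combined with Theorem \ref{thm:natstab} and Proposition \ref{prop:anyf}, and independence of $N$ via a third neighborhood contained in both. The only detail to adjust is that the composite should be taken at the Legendrian end (i.e., $f'^{-1}\circ f$, which sends $(K',p')$ to itself, with the identity as the witnessing contactomorphism), since Proposition \ref{prop:anyf} applies to Legendrian knots rather than to the transverse knot $(K,p)$.
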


\begin{proof}
We must show that $\tinvt(K)$ is independent of $N$, $(K',p')$, and $f$. 

First, fix $N$ and $(K',p')$. Suppose $f$ and $g$ are two diffeomorphisms of $Y$ which restrict to the identity outside of $N$ and send $(K',p')$ to $(K,p)$. Then $f^{-1}\circ g$ also restricts to the identity outside of $N$ and sends $(K',p')$ to itself. Since there exists a contactomorphism of $(Y,\xi)$ with the same property, namely the identity map, Proposition \ref{prop:anyf} implies that \[\KHMtfun(f^{-1}\circ g)(\kinvt(K')) = \kinvt(K'),\] which implies that \[\KHMtfun(f)(\kinvt(K'))=\KHMtfun(g)(\kinvt(K')).\] Thus, for  fixed $N$ and $(K',p')$, the class $\tinvt(K)$ is independent of the diffeomorphism $f$ in its definition.

Now, suppose $(K'_1,p'_1)$ and $(K'_2,p'_2)$ are  Legendrian pushoffs of $K$ in $N$. According to \cite{efm} there exists a Legendrian knot $(K''_i,p''_i)\subset N$, for each $i=1,2,$ which is the result of negatively stabilizing $(K'_i,p'_i)$ some number of times such that $(K''_1,p''_1)$ and $(K''_2,p''_2)$ are isotopic by an isotopy supported in $N$. For $i=1,2,$ let $g_i$ be a diffeomorphism of $Y$ which restricts to the identity outside of $N$ and sends $(K'_i,p'_i)$ to $(K''_i,p''_i)$. Similarly, let $h_i$ be a diffeomorphism of $Y$ which restricts to the identity outside of $N$ and sends $(K''_i,p''_i)$ to $(K,p)$. Then $h_i\circ g_i$ is a diffeomorphism of $Y$ which restricts to the identity outside of $N$ and sends $(K_i',p_i')$ to $(K,p)$. To show that $\tinvt(K)$ is independent of the Legendrian pushoff, it suffices to show that \begin{equation}\label{eqn:khmstabinvt}\KHMtfun(h_1\circ g_1)(\kinvt(K'_1))=\KHMtfun(h_2\circ g_2)(\kinvt(K'_2)).\end{equation} By Theorem \ref{thm:natstab}, we have that \[\KHMtfun(g_i)(\kinvt(K'_i))=\kinvt(K''_i)\] for $i=1,2$, so \eqref{eqn:khmstabinvt} becomes \[\KHMtfun(h_1)(\kinvt(K''_1))=\KHMtfun(h_2)(\kinvt(K''_2)),\] or, equivalently, \[\KHMtfun(h_2^{-1}\circ h_1)(\kinvt(K''_1))=\kinvt(K''_2).\] But this is true by Proposition \ref{prop:anyf} since there exists a contactomorphism of $(Y,\xi)$ which restricts to the identity outside of $N$ and sends $(K''_1,p''_1)$ to $(K''_2,p''_2)$.

We have thus shown that for a fixed $N$, the class $\tinvt(K)$ is independent of $(K',p')$ and $f$. That this class is also independent of $N$ follows from the fact that for any two such standard neighborhoods, there exists a third contained in both.
\end{proof}

The next two results are straightforward analogues of Proposition \ref{prop:Lfunct} and Corollary \ref{cor:Liso}; we omit their proofs.

\begin{proposition}
\label{prop:Tfunct}
Suppose $(K,p)\subset (Y,\xi)$ and $(K',p')\subset (Y',\xi')$ are based, oriented transverse knots and $f:(Y,\xi)\to(Y',\xi')$ is a contactomorphism sending $(K,p)$ to $(K',p')$. Then \[\KHMtfun(f)(\tinvt(K))=\tinvt(K').\] 
\end{proposition}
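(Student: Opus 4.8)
The statement to prove is Proposition \ref{prop:Tfunct}: functoriality of the transverse invariant $\tinvt$ under contactomorphism. The plan is to reduce this to the already-established functoriality of the Legendrian invariant $\kinvt$ (Proposition \ref{prop:Lfunct}) together with the compatibility result Proposition \ref{prop:anyf}, mirroring the structure of the proof of Theorem \ref{thm:natstab}.

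First I would unwind the definition of $\tinvt$. Choose a standard neighborhood $N$ of $(K,p)$ in $(Y,\xi)$ and a Legendrian pushoff $(K_0,p_0)\subset N$ of $K$, together with a diffeomorphism $g$ of $Y$ restricting to the identity outside $N$ and sending $(K_0,p_0)$ to $(K,p)$, so that $\tinvt(K)=\KHMtfun(g)(\kinvt(K_0))$. Since $f$ is a contactomorphism, $f(N)$ is a standard neighborhood of $(K',p')$ and $f(K_0,p_0)$ is a Legendrian pushoff of $K'$ contained in $f(N)$; moreover $f\circ g\circ f^{-1}$ is a diffeomorphism of $Y'$ restricting to the identity outside $f(N)$ and sending $f(K_0,p_0)$ to $(K',p')$. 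Because $\tinvt(K')$ is independent of these choices (by the well-definedness theorem just proved), we may compute it using precisely this data: $\tinvt(K')=\KHMtfun(f\circ g\circ f^{-1})(\kinvt(f(K_0)))$.

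Next I would apply Proposition \ref{prop:Lfunct} to the restriction of $f$, viewed as a contactomorphism $(Y,\xi)\to(Y',\xi')$ sending the Legendrian knot $(K_0,p_0)$ to the Legendrian knot $(f(K_0),f(p_0))$, which gives $\KHMtfun(f)(\kinvt(K_0))=\kinvt(f(K_0))$. Combining this with the functoriality of $\KHMtfun$ under composition of based diffeomorphisms (from \cite{bs3}, Theorem 8.5), we compute
\[
\tinvt(K')=\KHMtfun(f\circ g\circ f^{-1})(\kinvt(f(K_0)))=\KHMtfun(f)\circ\KHMtfun(g)\circ\KHMtfun(f^{-1})(\KHMtfun(f)(\kinvt(K_0))).
\]
Since $\KHMtfun(f^{-1})\circ\KHMtfun(f)=\KHMtfun(\mathrm{id})$ is the identity, the right-hand side collapses to $\KHMtfun(f)(\KHMtfun(g)(\kinvt(K_0)))=\KHMtfun(f)(\tinvt(K))$, which is the desired identity.

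The only genuine subtlety, and the step I expect to require the most care, is justifying that we are entitled to compute $\tinvt(K')$ using the transported data $(f(N),f(K_0,p_0),f\circ g\circ f^{-1})$: this rests on the already-proven independence of $\tinvt$ from the choice of standard neighborhood, Legendrian pushoff, and auxiliary diffeomorphism, so one must check that the transported triple really is of the required form — in particular that $f(K_0,p_0)$ is a Legendrian pushoff of $(K',p')$ in the technical sense of \cite{etnyre}, which follows since $f$ is a contactomorphism carrying a standard neighborhood to a standard neighborhood and preserving the relevant framings. Everything else is formal bookkeeping with the functoriality of $\KHMtfun$ and Proposition \ref{prop:Lfunct}, so I would keep those parts brief.
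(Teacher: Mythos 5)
Your proof is correct and is precisely the argument the paper has in mind: the paper omits the proof of Proposition \ref{prop:Tfunct}, describing it as a ``straightforward analogue'' of Proposition \ref{prop:Lfunct}, and your reduction---transporting the defining data $(N,(K_0,p_0),g)$ by $f$, invoking the independence-of-choices theorem for $\tinvt$, and then combining Proposition \ref{prop:Lfunct} with the composition functoriality of $\KHMtfun$---is exactly that analogue.
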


\begin{corollary}
\label{cor:Tiso}
If $(K,p)$ and $(K',p')$ are transversely isotopic knots  in $ (Y,\xi)$, then there exists an isomorphism \[\KHMtfun(-Y,K,p)\to\KHMtfun(-Y,K',p')\] which sends $\tinvt(K)$ to $\tinvt(K')$.
\end{corollary}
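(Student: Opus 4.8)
Corollary \ref{cor:Tiso}: if $(K,p)$ and $(K',p')$ are transversely isotopic knots in $(Y,\xi)$, then there is an isomorphism $\KHMtfun(-Y,K,p)\to\KHMtfun(-Y,K',p')$ sending $\tinvt(K)$ to $\tinvt(K')$.

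The plan is to reduce this to Proposition \ref{prop:Tfunct} exactly as Corollary \ref{cor:Liso} follows from Proposition \ref{prop:Lfunct}. A transverse isotopy from $(K,p)$ to $(K',p')$ is, by definition, a smooth path of transverse knots with a tracked basepoint; by the isotopy extension theorem this extends to an ambient isotopy $f_t:(Y,\xi)\to(Y,\xi)$, $t\in[0,1]$, with $f_0=\mathrm{id}_Y$, through contactomorphisms (since each $f_t$ carries the transverse knot to a transverse knot and one may take the ambient isotopy to preserve $\xi$), such that $f_1$ sends $(K,p)$ to $(K',p')$. First I would invoke Proposition \ref{prop:Tfunct} with this $f=f_1$: it gives $\KHMtfun(f_1)(\tinvt(K))=\tinvt(K')$. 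Then I would set the desired isomorphism to be $\KHMtfun(f_1):\KHMtfun(-Y,K,p)\to\KHMtfun(-Y,K',p')$, which is an isomorphism because $\KHMtfun$ is a functor on based knots with based diffeomorphisms (as recalled in Subsection \ref{ssec:khm}, these induced maps respect composition and send identities to identities, so $\KHMtfun(f_1)$ has inverse $\KHMtfun(f_1^{-1})$). This completes the argument.

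Since the paper explicitly says of Proposition \ref{prop:Tfunct} and Corollary \ref{cor:Tiso} that ``we omit their proofs,'' the intended write-up is essentially the one sentence: \emph{this follows from Proposition \ref{prop:Tfunct} just as Corollary \ref{cor:Liso} follows from Proposition \ref{prop:Lfunct}}, possibly appended with \qed. The only substantive point worth spelling out, if one wished to, is that a transverse isotopy can be promoted to an ambient contact isotopy fixing the basepoint appropriately, so that the hypothesis of Proposition \ref{prop:Tfunct} is met by $f_1$.

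\textbf{Expected main obstacle.} There is essentially no obstacle here; the content has already been established in Proposition \ref{prop:Tfunct} (itself asserted to follow by the same method as Proposition \ref{prop:Lfunct}) and in the functoriality of $\KHMtfun$ recalled in Subsection \ref{ssec:khm}. The only mild care needed is the standard fact that a transverse isotopy extends to an ambient isotopy through contactomorphisms of $(Y,\xi)$ — this uses Gray stability together with the isotopy extension theorem — so that Proposition \ref{prop:Tfunct} applies to the time-one map. Given the authors' stated intent to omit the proof, I would either omit it likewise or include only the one-line reduction.

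\begin{proof}[Proof of Corollary \ref{cor:Tiso}]
This follows from Proposition \ref{prop:Tfunct} in exactly the same way that Corollary \ref{cor:Liso} follows from Proposition \ref{prop:Lfunct}: a transverse isotopy from $(K,p)$ to $(K',p')$ extends to an ambient isotopy of $(Y,\xi)$ through contactomorphisms whose time-one map $f$ is a contactomorphism of $(Y,\xi)$ sending $(K,p)$ to $(K',p')$, and then $\KHMtfun(f)$ is the desired isomorphism.
\end{proof}
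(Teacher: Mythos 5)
Your proposal is correct and matches the paper's intent exactly: the paper omits the proof of Corollary \ref{cor:Tiso}, stating that it is a straightforward analogue of Corollary \ref{cor:Liso}, which is precisely the reduction you carry out (extend the transverse isotopy to an ambient contact isotopy and apply Proposition \ref{prop:Tfunct} to the time-one map). No gaps; the one point you flag, that a transverse isotopy extends to an ambient isotopy through contactomorphisms, is the standard fact the authors implicitly rely on in both the Legendrian and transverse cases.
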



\subsection{Additional properties of $\kinvt$} Here, we describe some additional properties satisfied by $\kinvt$ which are analogous to those satisfied by the LOSS invariant \cite{lossz,sahamie,ost} and the second author's invariant $\ell$ from \cite{sivek}. We will omit basepoints from our notation for convenience and because they are not so relevant to the results. The first result below follows almost exactly as in the proof of 
Proposition \ref{prop:shm-legendrian-surgery} (given in  \cite{bsSHM}); we will therefore omit it.

\begin{proposition}
\label{prop:legsurgknot}
Let $K,S \subset (Y,\xi)$ be disjoint Legendrian knots, and let $(Y',\xi')$ be the contact manifold obtained by a contact $(+1)$-surgery along $S$.  If $K$ has image $K'$ in $Y'$, then there is a map
\[ \KHMtfun(-Y,K) \to \KHMtfun(-Y',K') \]
which sends $\kinvt(K)$ to $\kinvt(K')$.\qed
\end{proposition}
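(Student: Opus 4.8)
The plan is to mimic the proof of Proposition \ref{prop:shm-legendrian-surgery} from \cite{bsSHM}, but carried out at the level of knot complements rather than closed sutured manifolds. Recall that $\kinvt(K)$ is, by Definition \ref{def:legsv}, the contact invariant $\invt(Y(\varphi),\xi_{K,\varphi})$ of the sutured contact manifold obtained from the Legendrian knot complement $(Y(\varphi),\xi|_{Y(\varphi)})$ by attaching a bypass along the arc $\alpha$ of Figure \ref{fig:bypassvs}. The key observation is that $S$ is a Legendrian knot disjoint from $K$, hence it lies in the \emph{interior} of the sutured contact manifold $(Y(\varphi),\Gamma_\varphi,\xi_{K,\varphi})$ once $\varphi$ is chosen so that $\Img(\varphi)$ is disjoint from a standard neighborhood of $S$ and from the bypass region. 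Performing contact $(+1)$-surgery on $S$ inside this sutured contact manifold produces precisely $(Y'(\varphi'),\Gamma_{\varphi'},\xi_{K',\varphi'})$, since contact surgery on $S$ commutes with removing a neighborhood of $K$ and attaching the bypass along $\alpha$ (all of which happen away from $S$), and the image of $K$ in $Y'$ is $K'$ by hypothesis.

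Given this, the proof proceeds as follows. First, I would fix a contact embedding $\varphi$ of the sort used to define $\kinvt(K)$ whose image is disjoint from a standard neighborhood $\nu(S)$ of $S$; this is possible since $K$ and $S$ are disjoint. Second, I would observe that the $(+1)$-surgery on $S$, being localized in $\nu(S)\subset \inr(Y(\varphi))$, identifies $(Y'(\varphi'),\xi_{K',\varphi'})$ with the result of contact $(+1)$-surgery on $S$ viewed as a Legendrian knot in the interior of $(Y(\varphi),\Gamma_\varphi,\xi_{K,\varphi})$; here $\varphi'$ is the induced embedding of the solid torus about $K'$ in $Y'$, and one checks as in the proof of Theorem \ref{thm:kinvt} that the bypass construction is unaffected by the surgery. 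Third, I would apply Proposition \ref{prop:shm-legendrian-surgery} with $(M,\Gamma,\xi)=(Y(\varphi),\Gamma_\varphi,\xi_{K,\varphi})$ to obtain a morphism
\[ F_S:\SHMtfun(-Y(\varphi)) \to \SHMtfun(-Y'(\varphi')) \]
sending $\invt(Y(\varphi),\xi_{K,\varphi})=\kinvt(K,\varphi)$ to $\invt(Y'(\varphi'),\xi_{K',\varphi'})=\kinvt(K',\varphi')$. Finally, passing through the canonical identifications that define $\KHMtfun$ from the various $\SHMtfun(Y(\varphi))$ (Definition \ref{def:khm}), this morphism descends to the desired map $\KHMtfun(-Y,K)\to\KHMtfun(-Y',K')$ sending $\kinvt(K)$ to $\kinvt(K')$; the fact that it is well-defined independent of $\varphi$ follows from the naturality of $F_S$ together with the transitivity maps $\Psit_{\varphi,\varphi'}$, exactly as in the functoriality statements already established for $\KHMtfun$.

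The main obstacle I anticipate is the bookkeeping in the third and fourth steps: verifying that the surgery map $F_S$ from Proposition \ref{prop:shm-legendrian-surgery} is compatible with the canonical isomorphisms $\Psit_{\varphi,\varphi'}$ relating different knot-complement presentations, so that it genuinely descends to a map of the projectively transitive systems $\KHMtfun$ rather than just of individual closures. This is the same kind of naturality check that underlies Definition \ref{def:khm} and the functoriality of $\KHMtfun$, and since $S$-surgery is supported away from $\Img(\varphi)$ it should commute with the ambient isotopies used to construct $\Psit_{\varphi,\varphi'}$; nonetheless it is the one place where care is genuinely required. Everything else — that $S$ lies in the interior, that the surgered manifold has image knot $K'$, and that the bypass attachment is undisturbed — is essentially immediate from the disjointness of $K$ and $S$ and the local nature of contact surgery, which is exactly why the authors say this "follows almost exactly as in the proof of Proposition \ref{prop:shm-legendrian-surgery}" and omit it.
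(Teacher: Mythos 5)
Your proposal is correct and matches the approach the paper intends: the proof is omitted there precisely because, as you observe, once $\varphi$ is chosen with image disjoint from a neighborhood of $S$, the knot $S$ lies in the interior of the sutured contact manifold $(Y(\varphi),\xi_{K,\varphi})$, the surgery commutes with the bypass attachment, and Proposition \ref{prop:shm-legendrian-surgery} supplies the map on $\SHMtfun$ sending $\kinvt(K,\varphi)$ to $\kinvt(K',\varphi')$. One small simplification: the descent to $\KHMtfun$ that you flag as the delicate step is essentially automatic, since by Remark \ref{rmk:completesubset} a morphism of projectively transitive systems is uniquely determined by a single component $\SHMtfun(-Y(\varphi))\to\SHMtfun(-Y'(\varphi'))$, and the completed morphism sends $\kinvt(K)$ to $\kinvt(K')$ because both of these elements are already known to be well defined (Theorem \ref{thm:kinvt}).
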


The next proposition is helpful in computing the invariant $\kinvt$ in certain cases.

\begin{proposition}
\label{prop:legunknotmap}
Suppose $U\subset (Y,\xi)$ is a  Legendrian unknot with $tb(U)=-1$ contained inside a Darboux ball in $(Y,\xi)$. Then there is an isomorphism \[\SHMtfun(-Y(1))\to\KHMtfun(-Y,U)\] which sends $\invt(Y(1))$ to $\kinvt(U)$.
\end{proposition}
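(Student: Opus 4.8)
The plan is to identify $\KHMtfun(-Y,U)$ with the sutured monopole homology of a specific sutured contact manifold and then recognize that manifold as one producing $\invt(Y(1))$. Concretely, let $U$ be the $tb=-1$ Legendrian unknot inside a Darboux ball $B\subset Y$. I would first fix a standard neighborhood $N$ of $U$ with $N\subset B$ and a contact embedding $\varphi\colon(S^1\times D^2,\xi_{leg})\to(Y,\xi)$ with image in $\inr(N)$, so that $\kinvt(U)$ is represented by the class $\invt(Y(\varphi),\xi_{U,\varphi})\in\SHMtfun(-Y(\varphi))$, where $\xi_{U,\varphi}$ is obtained by the bypass attachment of Figure \ref{fig:bypassvs}. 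The key geometric observation is that the sutured contact manifold $(Y(\varphi),m^+_\varphi\cup -m^-_\varphi,\xi_{U,\varphi})$ is contactomorphic to the sutured contact manifold $(Y(1),\delta,\xi|_{Y(1)})$: removing a standard neighborhood of the $tb=-1$ unknot from the Darboux ball and attaching the prescribed bypass yields a ball with a single dividing curve on its boundary, and the contact structure on the complement of $B$ is unchanged. This is exactly the kind of local model computation that Stipsicz and V\'ertesi carry out in \cite{sv}; I would cite their analysis (or run the bypass/convex-surface bookkeeping directly using \cite{honda2}) to pin down the dividing set and then invoke Eliashberg's uniqueness of tight contact structures on the ball to identify the contact structures.

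Given this contactomorphism $\Phi\colon(Y(1),\xi|_{Y(1)})\to(Y(\varphi),\xi_{U,\varphi})$, which restricts to the identity outside $B$, I would then use functoriality of sutured monopole homology and of the contact invariant. By Proposition \ref{prop:contactomorphism}, the induced morphism $\SHMtfun(\Phi)\colon\SHMtfun(-Y(1))\to\SHMtfun(-Y(\varphi))$ sends $\invt(Y(1),\xi|_{Y(1)})$ to $\invt(Y(\varphi),\xi_{U,\varphi})$. Unwinding Definition \ref{def:legsv}, the latter is precisely the representative of $\kinvt(U)$ associated to $\varphi$. To promote this from $\SHMtfun$ of a particular knot complement to an isomorphism $\SHMtfun(-Y(1))\to\KHMtfun(-Y,U)$, I would compose with the canonical map from $\SHMtfun(-Y(\varphi))$ into the projectively transitive system $\KHMtfun(-Y,U)$ coming from Definition \ref{def:khm}; since that canonical map sends the $\varphi$-representative of $\kinvt(U)$ to $\kinvt(U)\in\KHMtfun(-Y,U)$ by construction, the composition is the desired isomorphism sending $\invt(Y(1))$ to $\kinvt(U)$.

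The main obstacle is the geometric identification in the first paragraph: verifying that the bypass attachment of Figure \ref{fig:bypassvs}, performed on the complement of a standard neighborhood of the $tb=-1$ Legendrian unknot sitting in a Darboux ball, really does reproduce $Y(1)$ with its tight contact structure on the removed ball. This requires carefully tracking the dividing curves on the boundary torus through the bypass move (the slopes go from a pair of $(-1)$-curves to a pair of meridians, matching what one needs), checking that the resulting contact structure on the solid-torus-minus-neighborhood region is the unique tight minimally twisting one with that dividing set --- here Lemma \ref{lem:modelunique} does the work, exactly as in the proof of Theorem \ref{thm:kinvt} --- and then observing that gluing this back into the ball produces the tight ball, which is unique by Eliashberg. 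Everything else is a formal consequence of the functoriality results already established in Section \ref{sec:prelims} and the construction of $\kinvt$ in Section \ref{ssec:leginvt}, so once the local contact-geometry picture is nailed down the proof is short.
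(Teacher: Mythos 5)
There is a genuine gap, and it is topological rather than contact-geometric. Your ``key geometric observation'' asserts that $(Y(\varphi),m^+_\varphi\cup -m^-_\varphi,\xi_{U,\varphi})$ is contactomorphic to $(Y(1),\xi|_{Y(1)})$, on the grounds that removing a standard neighborhood of $U$ and attaching the bypass ``yields a ball with a single dividing curve on its boundary.'' This cannot be right: the bypass attachment never changes the underlying topology of the sutured manifold (as stated in Subsection \ref{ssec:leginvt}), so $Y(\varphi)$ remains a knot complement with torus boundary and two meridional sutures, whereas $Y(1)$ is a ball complement with sphere boundary and a single suture. These are not diffeomorphic as sutured manifolds, so the contactomorphism $\Phi$ on which your entire second paragraph rests does not exist, and with it goes the map $\SHMtfun(\Phi)$.

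The missing ingredient is the contact $1$-handle attachment. The paper relates $Y(1)$ to the sutured unknot complement $Y(U)$ by attaching a contact $1$-handle (which is what turns the $S^2$ boundary into a $T^2$ boundary with two meridional sutures); by \cite[Subsection 4.2]{bsSHM} this induces an isomorphism $\SHMtfun(-Y(1))\to\SHMtfun(-Y(U))\cong\KHMtfun(-Y,U)$, and by \cite[Corollary 4.14]{bsSHM} it carries $\invt(Y(1))$ to $\invt(Y(U),\xi_U)$, where $\xi_U$ is the contact structure induced by the handle attachment. Only after that does one face the contact-geometric identification you focus on, namely that $\xi_U$ agrees with the bypass-attachment contact structure defining $\kinvt(U)$. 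Even there the paper's route differs from yours: it reduces to $(S^3,\xi_{std})$, where $Y(U)$ is a solid torus with two longitudinal sutures carrying a unique tight contact structure up to isotopy, and then verifies tightness of both structures --- of $\xi_U$ via the nonvanishing of $\invt(Y(1))$ (Corollary \ref{cor:nonzerostronglyfillable}), and of the bypass structure via the nonvanishing of the LOSS invariant of the $tb=-1$ unknot together with \cite{sv}. Your proposed use of Lemma \ref{lem:modelunique} and Eliashberg's uniqueness on the ball could plausibly substitute for that last step, but without the $1$-handle map there is no candidate isomorphism $\SHMtfun(-Y(1))\to\KHMtfun(-Y,U)$ to begin with.
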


\begin{proof}
Let $B^3\subset Y$ be a Darboux ball containing $U$ and let $Y(1)$ refer to the sutured contact manifold obtained by removing a smaller Darboux ball $(B^3)'\subset B^3$ from $Y$. Note that one can attach a contact $1$-handle to $Y(1)$ so as to obtain a sutured manifold, often denoted by $Y(U)$, which is  the complement of a regular neighborhood $N\subset B^3$ of $U$ with two meridional sutures. One can identify $Y(U)$ with $Y(\varphi)$ for some embedding $\varphi$ as in Subsection \ref{ssec:khm}. As shown in \cite[Subsection 4.2]{bsSHM}, this contact $1$-handle attachment  gives rise to an isomorphism  \[\SHMtfun(-Y(1))\to\SHMtfun(-Y(U))\cong\KHMtfun(-Y,U).\]

Let $\xi_U$ be the contact structure on $Y(U)$ induced from that on $Y(1)$. It follows from \cite[Corollary 4.14]{bsSHM}  that the map above sends $\invt(Y(1))$ to $\invt(Y(U),\xi_U)$. To complete the proof of Proposition \ref{prop:legunknotmap}, it suffices to check that $(Y(U),\xi_U)$ is contactomorphic to the contact structure on $Y(U)$ which defines $\kinvt(U)$; namely, the contact structure obtained by removing a standard neighborhood of $U$ (which we can assume is contained in $B^3$) and then attaching a bypass as described in Subsection \ref{ssec:leginvt}. Since both this construction and the $1$-handle attachment above are performed in a Darboux ball, it suffices to check this in the case that $(Y,\xi) = (S^3,\xi_{std})$. In this case, $Y(U)$ is a solid torus with two longitudinal sutures. As there is a unique isotopy class of tight contact structures on such a torus, it suffices to check that both $\xi_U$ and the contact structure defining $\kinvt(U)$ are tight. We know that $\xi_U$ is tight since $\invt(Y(U),\xi_U)$ is identified with $\invt(Y(1))$ by the isomorphism above, and $\invt(Y(1))\neq 0$ in this case by, for example, Corollary \ref{cor:nonzerostronglyfillable}. The fact that the contact structure defining $\kinvt(U)$ is tight follows from the fact that the LOSS invariant of the $tb=-1$ unknot in $(S^3,\xi_{std})$ is nonzero and the fact that this LOSS invariant is the Heegaard Floer contact invariant of the contact structure of interest \cite{sv}. (It is possible to give a slightly longer, but self-contained---i.e., independent of results in Heegaard Floer homology---proof.)
\end{proof}

Combined with Proposition \ref{prop:closeddarboux}, this implies the following.

\begin{corollary}
If $\psi(Y,\xi)\neq 0$, then $\kinvt(U)\neq 0$.\qed
\end{corollary}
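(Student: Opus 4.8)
The plan is simply to chain together the two results immediately preceding the corollary, so the proof is essentially a one-liner once the relevant bookkeeping is in place. First I would record the (routine) observation that nonvanishing is a well-defined notion for an element of a projectively transitive system: by Definition \ref{def:element}, an element $x=\{x_\alpha\}$ is nonzero exactly when some representative $x_\alpha\in M_\alpha$ is nonzero, and since the transition maps $g^\alpha_\beta$ are represented by genuine module isomorphisms this is independent of $\alpha$. Likewise, an isomorphism of projectively transitive systems in the sense of Definition \ref{def:projtransysmor} is represented in each equivalence class $F^\alpha_\gamma$ by an honest module isomorphism $M_\alpha\to N_\gamma$, hence carries nonzero elements to nonzero elements.

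With that in hand I would argue as follows. The hypothesis $\psi(Y,\xi)\neq 0$ together with Proposition \ref{prop:closeddarboux} gives $\invt(Y(1))\neq 0$ in $\SHMtfun(-Y(1))$. Proposition \ref{prop:legunknotmap} provides an isomorphism $\SHMtfun(-Y(1))\to\KHMtfun(-Y,U)$ of projectively transitive systems which sends $\invt(Y(1))$ to $\kinvt(U)$. Applying this isomorphism to the nonzero element $\invt(Y(1))$, and using the previous paragraph, we conclude that $\kinvt(U)\neq 0$.

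There is no genuine obstacle here; the only point that warrants a moment's care is the first paragraph, namely that ``nonzero'' transports correctly through the formalism of projectively transitive systems and their isomorphisms, and this is immediate from the definitions recalled in Subsection \ref{ssec:transitive-systems}.
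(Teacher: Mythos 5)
Your proposal is correct and is exactly the paper's argument: the corollary is stated as an immediate consequence of chaining Proposition \ref{prop:closeddarboux} (which gives $\invt(Y(1))\neq 0$) with the isomorphism of Proposition \ref{prop:legunknotmap} (which carries $\invt(Y(1))$ to $\kinvt(U)$). The extra remark that nonvanishing is well-defined for elements of projectively transitive systems and is preserved by isomorphisms is a reasonable, if routine, piece of bookkeeping that the paper leaves implicit.
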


Propositions \ref{prop:legsurgknot} and \ref{prop:legunknotmap} admit the following additional corollaries. The proof of each  is identical to that of the corresponding statement in \cite[Section 5]{sivek}, with $\kinvt$ in place of $\ell$. The first result below is an analogue of a theorem of Sahamie \cite[Theorem 6.1]{sahamie}.

\begin{corollary}
\label{cor:sahamiecor}
Suppose $K$ is a Legendrian knot in $(Y,\xi)$ and let $(Y',\xi')$ be the result of contact $(+1)$-surgery on $K$. Then there exists a map
\[\KHMtfun(-Y,K)\to \SHMtfun(-Y'(1))\] which sends $\kinvt(K)$ to $\invt(Y'(1))$. Thus, if $\psi(Y',\xi')\neq 0$, then $\kinvt(K)\neq 0$. \qed
\end{corollary}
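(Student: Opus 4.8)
The plan is to realize the sutured contact manifold defining $\invt(Y'(1))$ as the result of a single contact handle attachment to the sutured contact manifold defining $\kinvt(K)$, following the template of Sahamie's argument \cite[Theorem 6.1]{sahamie} and its monopole-theoretic counterpart for $\ell$ in \cite[Section 5]{sivek}.

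First I would fix a contact embedding $\varphi$ as in Subsection \ref{ssec:leginvt}, so that $\kinvt(K)$ is represented by the class of $\invt(Y(\varphi),\xi_{K,\varphi})\in\SHMtfun(-Y(\varphi))$, where $Y(\varphi)$ is the complement of a standard neighborhood $\nu(K)$ of $K$ carrying meridional sutures and $\xi_{K,\varphi}$ is the bypass-modified contact structure. The geometric heart of the proof is then the observation that attaching the contact $(+1)$-surgery solid torus to $Y\ssm\nu(K)$, with an open Darboux ball deleted from the interior of that solid torus, recovers $Y'$ with a Darboux ball removed, and that this operation is precisely a contact $2$-handle attachment to $(Y(\varphi),\xi_{K,\varphi})$: the handle is glued along a Legendrian curve on $\partial Y(\varphi)$ whose smooth isotopy class is the new meridian of the $(+1)$-surgery, which meets each meridional dividing curve once and hence the dividing set in exactly two points, and the new boundary component is an $S^2$ with a single dividing curve. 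One verifies that the contact structures match --- not merely the underlying smooth manifolds --- by appealing to the classification of tight contact structures on $T^2\times[0,1]$, much as in the proof of Theorem \ref{thm:kinvt}; this convex surface theory bookkeeping is exactly what is carried out in \cite{sv} and, for $\ell$, in \cite[Section 5]{sivek}.

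Granting this identification, I would apply the contact handle attachment maps on sutured monopole homology --- the same class of cobordism maps used to prove Proposition \ref{prop:shm-legendrian-surgery} in \cite{bsSHM} and applied in this fashion in \cite[Section 5]{sivek} --- which yield a morphism $\SHMtfun(-Y(\varphi))\to\SHMtfun(-Y'(1))$ sending the contact class $\invt(Y(\varphi),\xi_{K,\varphi})$ to the contact class $\invt(Y'(1))$. Precomposing with the canonical identification $\KHMtfun(-Y,K)\cong\SHMtfun(-Y(\varphi))$ underlying Definition \ref{def:khm} produces the desired map carrying $\kinvt(K)$ to $\invt(Y'(1))$. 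For the final assertion, if $\psi(Y',\xi')\neq 0$ then $\invt(Y'(1))\neq 0$ by Proposition \ref{prop:closeddarboux}, and since the map sends $\kinvt(K)$ to this nonzero element, $\kinvt(K)\neq 0$.

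The main obstacle is the second step: checking, at the level of contact structures, that the bypass-modified knot complement together with the correct contact $2$-handle reproduces $(Y',\xi')$ minus a Darboux ball, including the precise identification of the attaching slope in terms of the contact framing. Everything else is formal manipulation of the cobordism maps on $\SHM$ and their compatibility with the contact invariant, which is already available from \cite{bsSHM}; and since the geometric step is a statement purely about sutured contact manifolds and their contact handle decompositions, the argument of \cite{sv,sivek} transfers verbatim with $\kinvt$ in place of $\ell$.
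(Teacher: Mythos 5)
Your proposal is correct and follows essentially the same route as the paper, which simply defers to the corresponding argument in \cite[Section 5]{sivek} (itself modeled on Sahamie's \cite[Theorem 6.1]{sahamie}): realize $Y'(1)$ by a contact $2$-handle attachment along the new meridian to the bypass-modified knot complement, invoke the compatibility of contact handle attachment maps with $\invt$ from \cite{bsSHM}, and finish with Proposition \ref{prop:closeddarboux}. You have merely spelled out the details the paper leaves to the citation.
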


The following is valid only over $\RR/2\RR:=\RR\otimes_\Z \Z/2\Z$ (its proof involves the surgery exact triangle in monopole Floer homology which has only been established in characteristic two).

\begin{corollary}
\label{cor:kinvtslice}
Suppose $K$ is a Legendrian knot in $(S^3,\xi_{std})$ with slice genus $g_s(K)>0$ and $tb(K)=2g_s-1$. Then $\kinvt(K)\neq 0$.\qed
\end{corollary}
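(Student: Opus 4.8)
The plan is to mimic the corresponding argument in \cite[Section 5]{sivek} but interpret every step in terms of $\kinvt$ rather than $\ell$, using the functoriality results of Section~\ref{sec:leginvt} together with Corollary~\ref{cor:sahamiecor}. Given a Legendrian knot $K\subset(S^3,\xi_{std})$ with $g_s(K)>0$ and $tb(K)=2g_s(K)-1$, the key topological input is that contact $(+1)$-surgery on $K$ produces a contact $3$-manifold $(Y',\xi')$ whose underlying smooth manifold admits a Stein (in fact symplectic) filling built from $D^4$ by attaching a single $2$-handle along $K$ with framing $tb(K)-1=2g_s(K)-2$; because $tb(K)-1$ is even and equals twice the slice genus minus two, a slice surface for $K$ pushed into $D^4$ can be capped off to yield a closed surface of positive genus and vanishing self-intersection number inside the filling, hence a nonzero element of $H_2$. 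The filling therefore has $b_2^+>0$.

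Next I would invoke the nonvanishing of the closed contact invariant for such fillings. Over $\RR/2\RR$, the surgery exact triangle in monopole Floer homology (which is only available in characteristic two, hence the restriction in the statement) is used exactly as in \cite{sivek} to show $\psi(Y',\xi')\neq 0$: one runs the triangle relating contact $(+1)$-surgery, the trivial surgery, and the adjacent surgery coefficient, and uses the positivity of the generalized adjunction/genus bound coming from the closed positive-genus surface to rule out the vanishing of the contact class. (Alternatively one can phrase this via the fact that the filling has $b_2^+>0$ and cite the relevant nonvanishing theorem of Kronheimer--Mrowka for contact invariants of fillable manifolds with $b_2^+$ positive, again noting the coefficient constraint.) With $\psi(Y',\xi')\neq 0$ in hand, Corollary~\ref{cor:sahamiecor} immediately gives a map
\[\KHMtfun(-S^3,K)\to\SHMtfun(-Y'(1))\]
sending $\kinvt(K)$ to $\invt(Y'(1))$, and since $\psi(Y',\xi')\neq 0$ the same corollary asserts $\kinvt(K)\neq 0$. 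In fact the corollary packages both steps, so the proof reduces to verifying the hypothesis $\psi(Y',\xi')\neq 0$.

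I expect the main obstacle to be this verification of $\psi(Y',\xi')\neq 0$, i.e.\ correctly running the monopole surgery exact triangle and controlling how the contact class behaves under the cobordism maps, exactly the point where the characteristic-two hypothesis enters. The delicate ingredients are: identifying which term of the triangle carries the contact element after the $(+1)$-surgery, showing the relevant cobordism map does not kill it (this is where the positive-genus closed surface and an adjunction-type argument are used), and matching conventions so that the orientation reversals in Corollary~\ref{cor:sahamiecor} line up. Since \cite[Section 5]{sivek} carries out precisely this computation for $\ell$ and the only change here is replacing $\ell$ by $\kinvt$ and $\SHMt$-level statements by their $\SHMtfun$ refinements via Corollary~\ref{cor:sahamiecor}, the cleanest write-up is simply to state that the proof is identical to that of the corresponding result in \cite[Section 5]{sivek}, which is what the excerpt does with the \qed after the statement.
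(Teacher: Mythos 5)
Your overall reduction is the one the paper intends: Corollary \ref{cor:sahamiecor} provides a map sending $\kinvt(K)$ to $\invt(Y'(1))$, where $(Y',\xi')$ is the result of contact $(+1)$-surgery on $K$, so everything comes down to showing $\psi(Y',\xi')\neq 0$ over $\RR/2\RR$; the paper then simply declares the argument identical to the corresponding one in \cite[Section 5]{sivek} with $\kinvt$ in place of $\ell$. To that extent your plan matches the paper's.

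However, the topological details you supply for the key step are incorrect and the ``alternative'' fillability route you sketch would fail. Contact $(+1)$-surgery on $K$ is smooth surgery with framing $tb(K)+1=2g_s(K)$, not $tb(K)-1$; the framing $tb(K)-1$ belongs to Legendrian (contact $(-1)$-) surgery, and it is only that operation which yields a Stein filling. The manifold $(Y',\xi')$ produced by contact $(+1)$-surgery carries no a priori filling---this is exactly why one cannot quote a nonvanishing theorem for fillable contact structures and why the surgery exact triangle is needed at all. Moreover, the closed surface obtained by capping a pushed-in slice surface with the core of the $2$-handle has self-intersection equal to the surgery framing, not zero, and a square-zero class would not give $b_2^+>0$ in any case. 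The genus bound enters with the opposite logic from what you describe: in the exact triangle relating $-S^3$, $-S^3_{2g_s-1}(K)$, and $-S^3_{2g_s}(K)$, the cobordism for the adjacent framing $tb(K)=2g_s-1$ contains a closed genus-$g_s$ surface of square $2g_s-1>2g_s-2$, so the adjunction inequality forces that cobordism map to \emph{vanish}; exactness then makes the contact-$(+1)$-surgery map injective, and since that map carries $\psi(S^3,\xi_{std})\neq 0$ to $\psi(Y',\xi')$, the latter is nonzero. Your closing suggestion---to cite \cite[Section 5]{sivek} verbatim---is what the paper does, but the account you give of that argument is not a correct description of it.
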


Finally, note that Proposition \ref{prop:ot} immediately implies the following.

\begin{corollary}
\label{cor:loose}
Suppose $K$ is a Legendrian knot in $(Y,\xi)$ and the complement of  $K$ is overtwisted. Then $\kinvt(K)= 0$. \qed
\end{corollary}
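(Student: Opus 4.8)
The plan is to show directly that, for a suitable choice of contact embedding $\varphi$, the sutured contact manifold $(Y(\varphi),\xi_{K,\varphi})$ appearing in Definition \ref{def:legsv} is overtwisted; the corollary then follows immediately from Proposition \ref{prop:ot}, since $\kinvt(K,\varphi) := \invt(Y(\varphi),\xi_{K,\varphi})$ determines $\kinvt(K)$.

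First I would invoke Theorem \ref{thm:kinvt}, which says $\kinvt(K)$ is independent of the choices in its construction, so that it suffices to compute $\kinvt(K,\varphi)$ for one convenient $\varphi$. By hypothesis there is an overtwisted disk $D$ in the complement of $K$. Since $D$ is compact and disjoint from $K$, it is disjoint from some standard neighborhood $N$ of $K$; I would then take $\varphi$ with $\Img(\varphi)\subset\inr(N)$ and use this $N$ as the standard neighborhood in the definition of $\xi_{K,\varphi}$.

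Next I would check that $\xi_{K,\varphi}$ agrees with $\xi$ on $Y\ssm N$. Indeed, the bypass attachment producing $(Y',\Gamma',\xi')$ from $(Y(\varphi),\xi|_{Y(\varphi)})$ changes the contact structure only in a neighborhood of the attaching arc $\alpha\subset\partial Y(\varphi)$, which we may assume lies inside $N$; and the diffeomorphism $f$ of \eqref{eqn:fmapp} restricts to the identity on $Y(\varphi)\ssm N = Y'\ssm N$. Hence $\xi_{K,\varphi}=(f_*)^{-1}(\xi')$ coincides with $\xi|_{Y(\varphi)}$ on $Y(\varphi)\ssm N = Y\ssm N$. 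Since $D\subset Y\ssm N$ and this region lies in the interior of $Y(\varphi)$, the disk $D$ is an overtwisted disk for $\xi_{K,\varphi}$, so $(Y(\varphi),\xi_{K,\varphi})$ is overtwisted. By Proposition \ref{prop:ot} we get $\invt(Y(\varphi),\xi_{K,\varphi})=0$ in $\SHMtfun(-Y(\varphi))$, hence $\kinvt(K,\varphi)=0$, hence $\kinvt(K)=0$.

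There is no substantive obstacle here; the only point requiring any care is the bookkeeping in the previous paragraph — confirming that both the bypass modification and the diffeomorphism $f$ are supported inside $N$, so that an overtwisted disk placed in the region where $\xi_{K,\varphi}$ still equals $\xi$ genuinely survives. This is exactly the content of the remark in the text that Proposition \ref{prop:ot} "immediately implies" the result.
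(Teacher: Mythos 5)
Your proposal is correct and is exactly the argument the paper intends when it says Proposition \ref{prop:ot} ``immediately implies'' the corollary: the bypass attachment and the diffeomorphism $f$ are supported in a standard neighborhood $N$ of $K$, so an overtwisted disk in the complement survives in $(Y(\varphi),\xi_{K,\varphi})$ and the contact invariant vanishes. You have merely spelled out the bookkeeping the paper leaves implicit.
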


\section{A map induced by Lagrangian concordances}
\label{sec:lagcobordism}

The following relation on the set of Legendrian knots in a closed contact 3-manifold $(Y,\xi)$ was introduced by Chantraine \cite{chantraine}.

\begin{definition}
\label{def:lagconcordance}
Let $K_-$ and $K_+$ be Legendrian knots in $(Y,\xi)$ parametrized by maps $\gamma_\pm: S^1 \to Y$.  We say that $K_-$ is \emph{Lagrangian concordant} to $K_+$ if there is a  Lagrangian cylinder
\[ L: S^1 \times \R \hookrightarrow Y\times \R \]
in the symplectization of $Y$ and a constant $T>0$ such that $L(s,t) = \gamma_-(s)$ for all $t\leq -T$ and $L(s,t)  = \gamma_+(s)$ for all $t\geq T$. We will $L$ to refer both to this map and to its image.
\end{definition}

Chantraine showed that $tb$ and $r$ are Lagrangian concordance invariants, and the second author showed in \cite{sivek} that the Legendrian invariant $\ell$ is well-behaved under Lagrangian concordance as well.  In this section we will prove similar results about the effect of Lagrangian concordance on the invariant $\kinvt$. Our main result is the following.

\begin{theorem}
\label{thm:lagconcordance}
Suppose $K_-$ and $K_+$ are Legendrian knots in $(Y,\xi)$ and  $K_-$ is Lagrangian concordant to $K_+$. Then there is a map
\[\KHMtfun(-Y,K_+) \to \KHMtfun(-Y,K_-) \]
which sends $\kinvt(K_+)$ to $\kinvt(K_-)$.
\end{theorem}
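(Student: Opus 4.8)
The plan is to model the proof on the second author's argument for the analogous statement about the invariant $\ell$ in \cite{sivek}, adapting it to the ``natural" setting. First I would fix contact embeddings $\varphi_\pm \colon (S^1\times D^2,\xi_{leg}) \hookrightarrow (Y,\xi)$ onto standard neighborhoods of $K_\pm$, small enough that the Lagrangian cylinder $L$ meets $Y\times\R$ away from a neighborhood of these neighborhoods in an expected way; this reduces the problem, via Definition \ref{def:legsv} and Remark \ref{rmk:completesubset2}, to producing a map \[\SHMtfun(-Y(\varphi_+)) \to \SHMtfun(-Y(\varphi_-))\] carrying $\kinvt(K_+,\varphi_+) = \invt(Y(\varphi_+),\xi_{K_+,\varphi_+})$ to $\invt(Y(\varphi_-),\xi_{K_-,\varphi_-})$. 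The natural tool is the functoriality of Kronheimer and Mrowka's closed contact invariant $\psi$ under exact symplectic cobordisms, so the heart of the argument is to manufacture such a cobordism between suitably chosen marked contact closures of the two sutured contact manifolds.

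The construction proceeds in several steps. First, fixing a large genus $g$ (larger than $N$ of Theorem \ref{thm:well-defined2} for both sutured contact manifolds), I would choose marked contact closures $(\data_\pm,\bar\xi_\pm)$ of $(Y(\varphi_\pm),\xi_{K_\pm,\varphi_\pm})$ that are ``compatible'' in the sense that they differ only inside a standard neighborhood $N$ of the concordance: that is, the closures agree on the part of $Y$ away from $N$ and on the auxiliary gluing pieces $F\times[-1,1]$, $R\times[-1,1]$. Concretely, since $K_-$ Lagrangian concordant to $K_+$ gives a Lagrangian cylinder $L\subset Y\times\R$, one removes a standard tubular neighborhood $\nu(L)$ of $L$ from $Y\times\R$; the boundary $\partial\nu(L)$, suitably cornered, is the symplectization of a contact manifold with boundary — essentially the symplectization of $(T^2\times I)$ with the bypass-modified dividing sets on the two ends. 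Gluing this symplectization back in between the product cobordisms on the complement of $\nu(L)$ yields an exact symplectic cobordism from $(Y_-,\bar\xi_-)$ to $(Y_+,\bar\xi_+)$, where $(Y_\pm,\bar\xi_\pm)$ are the closed contact manifolds underlying $(\data_\pm,\bar\xi_\pm)$ — one checks exactness because $L$ is exact Lagrangian in the symplectization and the cylindrical/symplectization pieces contribute exactly. Then I invoke the functoriality of $\psi$ (as in \cite{km, kmosz}) to get a map \[\HMto(-Y_+) \to \HMto(-Y_-)\] sending $\psi(Y_+,\bar\xi_+)$ to $\psi(Y_-,\bar\xi_-)$, and I verify that this map respects the ``topmost $\Sc$-structure relative to $R$'' decomposition and the local systems $\Gamma_{\pm\eta}$, so that it descends to a morphism $\SHMtfun(-Y(\varphi_+)) \to \SHMtfun(-Y(\varphi_-))$ carrying one contact invariant to the other. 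Finally, translating back through $\kinvt$ and the canonical isomorphisms $\Psit_{\varphi,\varphi'}$ gives the desired map $\KHMtfun(-Y,K_+)\to\KHMtfun(-Y,K_-)$ with $\kinvt(K_+)\mapsto\kinvt(K_-)$.

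The main obstacle I expect is the geometric bookkeeping in the middle step: arranging the neighborhood $\nu(L)$ so that (a) its complement in $Y\times\R$ is honestly a product symplectic cobordism matching the closures on the ``outside,'' (b) the contact structure induced on $\partial\nu(L)$, after rounding corners, genuinely matches the bypass-attached contact structures $\xi_{K_\pm,\varphi_\pm}$ at the two ends — this is where the precise form of the bypass in Figure \ref{fig:bypassvs} and Lemma \ref{lem:modelunique} get used to pin down the contact structure up to the isotopy that functoriality of $\psi$ can absorb — and (c) the resulting glued cobordism is exact symplectic, not merely weakly symplectic, so that Kronheimer--Mrowka's cobordism maps apply. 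A secondary technical point is checking that the induced cobordism map is compatible with the projectively transitive system structure, i.e.\ independent up to multiplication by a unit in $\RR$ of the genus $g$ and of the auxiliary choices; this should follow from the same kind of naturality arguments used in \cite{bs3, bsSHM} together with Theorem \ref{thm:well-defined2}, but it needs to be stated carefully so that the map is a genuine morphism of projectively transitive systems. Everything else — reducing to fixed embeddings, transporting along $\Psit_{\varphi,\varphi'}$, and passing from $\SHMtfun$ to $\KHMtfun$ — is routine given the machinery already recalled in Section \ref{sec:prelims} and the properties of $\kinvt$ established in Section \ref{sec:leginvt}.
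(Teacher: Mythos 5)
Your proposal follows essentially the same route as the paper: excise a Weinstein neighborhood of the exact Lagrangian cylinder from the symplectization, glue in the symplectization of the closure piece (bypass layer plus auxiliary surface and $R\times[-1,1]$) to obtain an exact symplectic cobordism between marked contact closures of $(Y(\varphi_\pm),\xi_{K_\pm,\varphi_\pm})$, and apply functoriality of the monopole contact invariant under exact cobordisms. The one technical wrinkle you flag but do not resolve --- that the patched primitive $\lambda_X$ need not restrict to a contact form on the positive boundary --- is handled in the paper by appending a collar via Eliashberg's gluing result, but this is exactly the kind of fix your outline anticipates.
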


The idea behind the proof  is to replace a model neighborhood of the concordance $L\subset Y\times I$ with a certain symplectization so as to obtain an exact symplectic cobordism from a contact closure of the  complement of $K_-$ to a contact closure of the complement of $K_+$, where these complements are equipped with the contact structures used to define the invariants $\kinvt(K_\pm)$. We then use the fact that the monopole Floer contact invariant for closed contact manifolds is functorial with respect to the maps induced by exact symplectic cobordisms (see \cite{ht} and the discussion in \cite[Subsection 2.3]{bsSHM}).

\begin{proof}[Proof of Theorem \ref{thm:lagconcordance}]
Let $\alpha$ be a contact form for $(Y,\xi)$. By Definition \ref{def:lagconcordance}, there exists a Lagrangian cylinder $L\subset (Y\times I, d(e^t\alpha))$ with $L\cap \{\pm T\} = K_\pm\times\{\pm T\}$, where $I = [-T-\epsilon, T+\epsilon]$ for some $\epsilon,T>0$. Observe that $L$ is \emph{exact}, meaning that the form $e^t\alpha|_L$ is exact. It suffices to show that $e^t\alpha$ vanishes on $H_1(L;\R)$. For this,  note that $H_1(L;\R)$ is generated by the class of $K_-\times\{-T\}$ and  \[ \int_{K_- \times \{-T\}} e^t\alpha = e^{-T} \int_{K_-} \alpha = 0 \] since $K_-$ is Legendrian. The Lagrangian cylinder $L_-:=K_-\times I\subset (Y\times I, d(e^t\alpha))$ is similarly exact. The Weinstein Tubular Neighborhood Theorem therefore gives an \emph{exact} symplectomorphism \[\varphi:N(L_-)\to N(L),\] from a neighborhood of one Lagrangian to a neighborhood of the other. Exactness in this context means that if $d\lambda_-$ and $d\lambda$ are symplectic forms on the neighborhoods $N(L_-)$ and $N(L)$, then $\varphi^*\lambda-\lambda_-=df$ for some function $f:N(L_-)\to \R$. We are free to choose the identification $\varphi|_{L_-}:L_-\xrightarrow{\sim} L$ to be the identity on $K_-\times[-T-\epsilon,-T]$, so that $df\equiv0$ on $N(L_-)\cap (Y\times[-T-\epsilon,-T]).$ Since $f$ is only determined up to a constant, we may then require that $f\equiv 0$ on $N(L_-) \cap (Y\times[-T-\epsilon,-T])$.

By shrinking $N(L_-)$ and $N(L)$ if necessary, we can assume that $N(L_-)$ is of the form $N(K_-)\times I$, where $N(K_-)$ is a standard neighborhood of $K_-$ in $(Y,\xi)$. Let us identify $N(K_-)$ with  $(S^1\times D^2_2,\xi_{leg})$, where $D^2_a\subset \C$ refers to the disk centered at the origin of radius $a$, so that $\varphi$ can be thought of as a map
\[\varphi:(S^1\times D^2_2)\times I \to N(L).\] Note that $\varphi$ sends $(S^1\times D^2_2)\times\{\pm (T+\epsilon)\}$ to  a standard neighborhood of $K_\pm$. 

Let $Y_-$ be the sutured contact manifold obtained from the sutured Legendrian knot complement $Y\ssm (S^1\times D^2_1)$ by attaching a bypass as prescribed in Subsection \ref{ssec:leginvt} (for the construction of $\kinvt(K_-)$). We can assume that the contact structure on $Y_-$ agrees with $\xi$ outside of $S^1\times D^2_{1.5}$. Let $\data_- = ((\bar Y_-,R_-, r_-, m_-, \eta_-), \bar\xi_-)$  be a marked contact closure of $Y_-$, where $\bar Y_-$ is built from a contact preclosure of $Y_-$ by attaching a $[-1,1]$-invariant contact structure on  $R_-\times [-1,1]$, so that $r_-$ and $m_-$ are inclusion maps. 
Let \[Z = \bar Y_-\ssm (Y_-\ssm (S^1\times D^2_2)),\] and let $A$ refer to   the regions of $Y$ and $Z$ given by $S^1\times(D^2_2\ssm D^2_{1.5})$. Let $(X,\omega)$ be the symplectic manifold formed by gluing the symplectization $Z\times I$ to  the complement $(Y\times I) \ssm \varphi((S^1\times D^2_{1.5})\times I)$  by the map \[\varphi:A\times I\to \varphi(A\times I),\] as depicted in Figure \ref{fig:concordancegluing}.

\begin{figure}[ht]
\labellist
\small \hair 2pt
\pinlabel $\varphi$ at 244 268
\tiny
\pinlabel $1$ at 103 417
\pinlabel $1.5$ at 103 440
\pinlabel $2$ at 103 463
\pinlabel $1.5$ at 103 187
\pinlabel $2$ at 103 209
\endlabellist
\centering
\includegraphics[width=7.5cm]{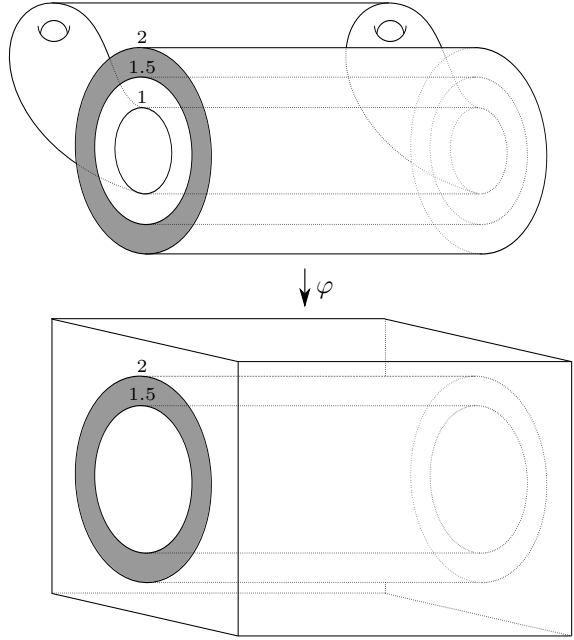}
\caption{Top, a schematic of $Z$, with the region $A$ shown in gray. Bottom, a schematic of $(Y\times I) \ssm \varphi((S^1\times D^2_{1.5})\times I)$, with $\varphi(A)$ shown in gray. }
\label{fig:concordancegluing}
\end{figure}

Let  $Y_\pm^{\varphi}$ be the sutured contact manifold obtained from the sutured Legendrian knot complement $(Y\times \{\pm(T+\epsilon)\}) \ssm \varphi((S^1\times D^2_1)\times\{\pm (T+\epsilon)\})$ by attaching by a bypass as prescribed in Subsection \ref{ssec:leginvt}. By construction,  $(X,\omega)$ is a symplectic cobordism from $(\bar Y_-^\varphi,\bar\xi_-^\varphi)$ to $(\bar Y_+^\varphi,\bar\xi_+^\varphi)$, where $(\bar Y_\pm^\varphi,\bar\xi_\pm^\varphi)$ is the   contact manifold underlying a marked contact closure \[\data_\pm^\varphi = ((\bar Y_\pm^\varphi,R_\pm^\varphi, r_\pm^\varphi, m_\pm^\varphi, \eta_\pm^\varphi), \bar\xi_\pm^\varphi)\] of $Y_\pm^{\varphi}$ (the additional data  is naturally inherited from $\data_-$). In particular, the Legendrian invariant $\kinvt(K_\pm)\in\KHMtfun(-Y,K_\pm)$ is  represented by the contact class \[\invt(\bar Y_\pm^\varphi,\bar\xi_\pm^{\varphi})\in \SHMt(-\data_{\pm}^\varphi)=\HMtoc(-\bar Y_\pm^{\varphi}|{-}R_\pm^\varphi;\Gamma_{{-}\eta_\pm^\varphi}).\] To complete the proof of Theorem \ref{thm:lagconcordance}, it therefore suffices to show that the induced map \begin{equation}\label{eqn:mapX}\HMtoc(X|{-}R_-^\varphi;\Gamma_{{-}\nu}):\HMtoc(-\bar Y_+^{\varphi}|{-}R_+^\varphi;\Gamma_{{-}\eta_+^\varphi})\to\HMtoc(-\bar Y_-^{\varphi}|{-}R_-^\varphi;\Gamma_{{-}\eta_-^\varphi})\end{equation} sends $\invt(\bar Y_+^\varphi,\bar\xi_+^{\varphi})$ to $\invt(\bar Y_-^\varphi,\bar\xi_-^{\varphi})$, up to multiplication by a unit in $\RR$, where $\nu = \eta^{\varphi}_-\times I\subset Z\times I\subset X$ is a cylindrical cobordism from $\eta^{\varphi}_-$ to $\eta^\varphi_+$. This would follow from the work in \cite{ht} (see also \cite[Theorem 2.22 \& Remark 2.24]{bsSHM})  if we knew that $(X,\omega)$ were an \emph{exact} symplectic cobordism from $(\bar Y_-^\varphi,\bar\xi_-^\varphi)$ to $(\bar Y_+^\varphi,\bar\xi_+^\varphi)$. This is true after a slight modification (replacing $X$ with $\tilde X$ below).

We first show that the 2-form $\omega$ is exact. Suppose $\lambda_-$ and $\lambda$ are global primitives for the symplectic forms on the pieces $Z\times I$ and $(Y\times I) \ssm \varphi((S^1\times D^2_{1.5})\times I)$ coming from the Liouville vector field $\partial_t$. Since $\varphi$ is an exact symplectomorphism, we have that $\varphi^*\lambda-\lambda_- = df$ and $d\lambda = \omega$ on the intersection of these pieces, which we will identify as $A\times I$. Let $\rho:A\to[0,1]$ be a smooth cutoff function which is equal to 1 on a neighborhood of $S^1\times D^2_2$ and 0 on a neighborhood of $S^1\times D^2_{1.5}$. Define the 1-form 
\[ \lambda_X = \begin{cases}
\lambda_- & \mathrm{on\ } (Z\smallsetminus A) \times I \\
\lambda_- + d(\rho f) & \mathrm{on\ } A \times I \\
\lambda & \mathrm{on\ } (Y\times I) \ssm \varphi((S^1\times D^2_{2})\times I). \\
\end{cases} \]
Then $\lambda_X$ is a globally defined primitive for $\omega$, and its restriction to the boundary component $\bar Y_-^\varphi$ is a contact form for $\bar \xi_-^\varphi$, since    $f\equiv 0$ near this boundary component. On the other hand,
$ \omega|_{Y_+}  = d\lambda_+ $
for some contact form $\lambda_+$ for $\bar \xi_+^\varphi$, but we do not know that $\lambda_X|_{Y_+}$ is itself a contact form because of the $d(\rho f)$ term. To remedy this, we  apply a result of Eliashberg \cite[Proposition 3.1]{yasha7}. His result says that we can glue a symplectic $(\bar Y_+^{\varphi} \times [1,C], \Omega)$ to $\bar Y_+^\varphi \subset \partial X$ so that $\Omega$ is an exact 2-form whose primitive agrees with the primitive $\lambda_X$ of $\omega$ near $\bar Y_+^\varphi \times \{1\}$ and $\Omega$ is the symplectization of $\bar Y_+^{\varphi}$ near $\bar Y_+^\varphi\times \{C\}$.  The result is a symplectic form $\tilde{\omega} = d\tilde{\lambda}$ on the cobordism
\[ \tilde{X} = X \cup_{\bar Y_+^\varphi} (\bar Y_+^\varphi \times [1,C]) \] from $\bar Y_-^\varphi$ to $\bar Y_+^\varphi$
so that $\tilde{\lambda}$ restricts  to contact forms for $(\bar Y_-^\varphi,\bar\xi_-^\varphi)$ and $(\bar Y_+^\varphi,\bar\xi_+^\varphi)$. Replacing $X$ with $\tilde X$ in \eqref{eqn:mapX}, we have the desired result.
\end{proof}

\begin{remark}
A \emph{decorated}  concordance $(L,\gamma)\subset Y\times I$ from $(K_-,p_-)$ to $(K_+,p_+)$ is a concordance $L$ from $K_-$ to $K_+$ together with an embedded arc $\gamma\subset L$ from $p_-$ to $p_+$. Based on Juh{\'a}sz's work  \cite{juhasz3}, we expect that such an $(L,\gamma)$ should induce a well-defined map (i.e., one that is independent of the auxiliary choices in its construction) \[F_{(L,\gamma)}:\KHMtfun(-Y,K_+,p_+)\to\KHMtfun(-Y,K_-,p_-),\] defined in a manner similar to the construction of the map  in Theorem \ref{thm:lagconcordance}. 
More generally, we expect that decorated \emph{cobordisms} between based knots should induced well-defined maps on $\KHMtfun$, and that there should be an analogue of Theorem \ref{thm:lagconcordance} for exact Lagrangian cobordisms of arbitrary genus. It is   worth noting that, even without such an analogue, we can prove the following.
 \end{remark}

\begin{lemma}
Suppose $U$ is the Legendrian unknot in $(S^3,\xi_{std})$ with $tb=-1$.
If $U$ is exact Lagrangian cobordant  to a Legendrian knot $K$ with $g_s(K)>0$, then $\kinvt(K) \neq 0$ over $\RR/2\RR$.
\end{lemma}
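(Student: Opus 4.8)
The plan is to deduce this from Corollary \ref{cor:kinvtslice} by showing that an exact Lagrangian cobordism from $U$ to $K$ forces the relation $tb(K) = 2g_s(K) - 1$. Let $\Sigma\subset Y\times\R$ be such a cobordism, which we may take to be connected with negative end $U$ and positive end $K$; write $g$ for its genus, so that $\chi(\Sigma) = -2g$. First I would invoke Chantraine's formula for Lagrangian cobordisms \cite{chantraine}, which gives $tb(K) - tb(U) = -\chi(\Sigma) = 2g$; since $tb(U) = -1$, this yields $tb(K) = 2g - 1$.

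Next I would bound the slice genus of $K$ from above by $g$. The maximal Thurston--Bennequin Legendrian unknot $U$ bounds an embedded Lagrangian disk $D$ in the standard symplectic $4$-ball; concatenating $D$ with $\Sigma$ along the negative end produces an exact Lagrangian filling of $K$ of genus $g(D) + g = g$, and in particular a smoothly embedded genus-$g$ surface in $B^4$ with boundary $K$. Therefore $g_s(K)\le g$. (One can avoid symplectic language here entirely: capping the smooth cobordism $\Sigma\subset S^3\times I$ off with a disk bounding $U$ in a ball gives the same conclusion, using only that $U$ is unknotted.)

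Then I would combine these facts with the slice--Bennequin inequality $tb(K) + |r(K)| \le 2g_s(K) - 1$. Substituting the two identities above,
\[
2g - 1 + |r(K)| \;=\; tb(K) + |r(K)| \;\le\; 2g_s(K) - 1 \;\le\; 2g - 1,
\]
so $r(K) = 0$ and $g_s(K) = g$; in particular $tb(K) = 2g - 1 = 2g_s(K) - 1$. Since $g_s(K) > 0$ by hypothesis, Corollary \ref{cor:kinvtslice} applies and gives $\kinvt(K)\neq 0$ over $\RR/2\RR$, as desired.

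The argument is essentially formal given the cited inputs, so I do not anticipate a serious obstacle: no new Floer-theoretic work beyond Corollary \ref{cor:kinvtslice} is required, and in particular one does not need the conjectural higher-genus analogue of Theorem \ref{thm:lagconcordance}. The only step that needs a little care is the concatenation used for the slice-genus bound --- checking that the standard Lagrangian disk filling of the $tb=-1$ unknot can genuinely be glued to the negative end of $\Sigma$ to produce an honest genus-$g$ surface in $B^4$ --- but even this can be replaced by the purely smooth observation above.
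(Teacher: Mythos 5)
Your argument is correct and takes essentially the same route as the paper: establish the identity $tb(K) = 2g_s(K)-1$ and then apply Corollary \ref{cor:kinvtslice}. The only difference is that the paper obtains this identity by citing Chantraine's Theorem 1.3 directly, whereas you rederive it from the Euler characteristic formula for Lagrangian cobordisms, the capping-off argument, and the slice--Bennequin inequality --- which is precisely the content of Chantraine's proof.
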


\begin{proof}
The existence of such a cobordism implies, by \cite[Theorem 1.3]{chantraine}, that $tb(K) = 2g_s(K)-1$.  Corollary \ref{cor:kinvtslice} then implies that $\kinvt(K) \neq 0$ over $\RR/2\RR$.
\end{proof}

\section{Examples and nonreversible Lagrangian concordances}
\label{sec:examples}
We end with some examples. The first two  illustrate how Theorem \ref{thm:lagconcordance} can be used to deduce the nonvanishing of  $\kinvt$.  All provide new examples of nonreversible Lagrangian concordances.

\begin{example} For the first example (suggested by Lenny Ng), consider the Legendrian knots represented by the grid diagrams in Figure \ref{fig:m8_20} (which can be converted to front diagrams by smoothing all northeast and southwest corners and rotating $45^\circ$ counterclockwise). This figure describes a Lagrangian concordance from a negative stabilization $U_-$ of the $tb=-1$ Legendrian unknot  to a Legendrian representative $K$ of the knot $m(8_{20})$ with $(tb(K),r(K))=(-2,-1)$. Proposition \ref{prop:legunknotmap} tells us that $\kinvt(U)\neq 0$. Theorem \ref{thm:natstab} then implies that $\kinvt(U_-)\neq 0$.  We may  therefore conclude from Theorem \ref{thm:lagconcordance} that $\kinvt(K)\neq 0$ as well. 

\label{ex:m8_20}
\begin{figure}[ht]
\centering
\includegraphics[width=9.4cm]{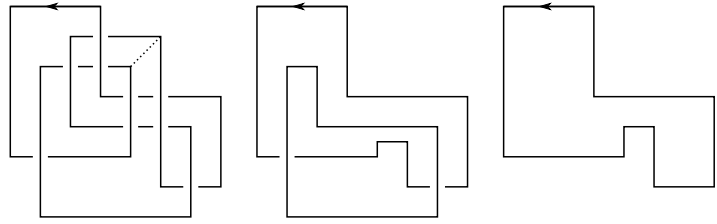}
\caption{Inserting a Lagrangian saddle \cite{ehk} along the dotted line and then capping off a Legendrian unknot to build a Lagrangian concordance from  $U_-$ to a Legendrian $m(8_{20})$, shown here in reverse.}
\label{fig:m8_20}
\end{figure}

\end{example}

\begin{example}
For the next example, consider Figure \ref{fig:m11n_71}, which describes a Lagrangian concordance from a negative stabilization $C_-$ of a Legendrian representative $C$ of $m(5_2)$ with $tb(C) = \overline{tb}(m(5_2)) = 1$ to a Legendrian representative $K$ of $m(11n_{71})$ with $tb(K) = \overline{tb}(m(11n_{71})) = 0$ and $r(K)=-1$. The knot $m(5_{2})$ has smooth slice genus $g_s=1$, and $tb(C) = 2g_s(C)-1$, so $\kinvt(C)\neq 0$ over $\RR/2\RR$ by Corollary \ref{cor:kinvtslice}. Theorem \ref{thm:natstab} then implies that $\kinvt(C_-)\neq 0$ over $\RR/2\RR$. We may therefore conclude that $\kinvt(K)\neq 0$ over $\RR/2\RR$, by Theorem \ref{thm:lagconcordance}. 

\label{ex:m11n_71}
\begin{figure}[ht]
\centering
\includegraphics[width=9.9cm]{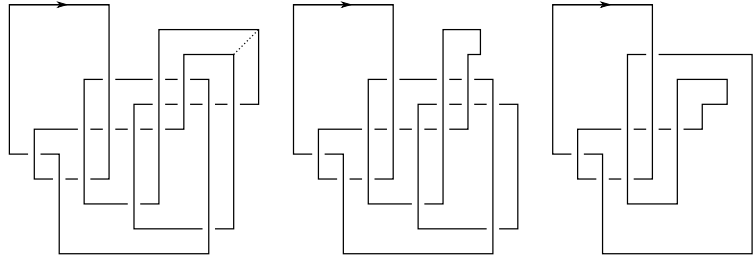}
\caption{A Lagrangian concordance from a negatively stabilized Legendrian $m(5_2)$ to a Legendrian $m(11n_{71})$, shown in reverse.}
\label{fig:m11n_71}
\end{figure}
\end{example}

\begin{remark} It is worth noting that the strategies used in the two examples above to deduce  nonvanishing results for $\kinvt$ cannot be used to deduce the analogous nonvanishing of Legendrian contact homology ($LCH$) for these Legendrian representatives of $m(8_{20})$ and $m(11n_{71})$: although $LCH$ behaves naturally with respect to Lagrangian concordance \cite{ehk}, it vanishes for stabilized knots. 

On the other hand, $LCH$ can be used to show that the Lagrangian concordances described in these examples are nonreversible: if there were  a  concordance from any of these $K$ to the respective stabilized knot $S$, then by \cite{ehk} there would be a morphism $f: \mathcal{A}(S) \to \mathcal{A}(K)$, where $\mathcal{A}$ refers to the Legendrian contact homology DGA.  Since $S$ is stabilized, there exists $x\in \mathcal{A}(S)$ such that $\partial x = 1$, and so we would have
\[ \partial(f(x)) = f(\partial x) = f(1) = 1; \]
i.e.\ $\mathcal{A}(K)$ would be trivial.  But each $K$ in the examples above achieves the Kauffman bound on $tb$, so $\mathcal{A}(K)$ admits an ungraded augmentation \cite{rutherford} and is thus necessarily nontrivial.
\end{remark}

The first examples of nonreversible Lagrangian concordances were found by Chantraine in \cite{chantraine3}. Our examples above provide new instances of this phenomenon. Given a nonreversible Lagrangian concordance, one can find infinitely many distinct such concordances by connect summing. 
 Below, we provide the first infinite family of nonreversible Lagrangian concordances between \emph{prime} knots. These examples also indicate that results in $LCH$ may sometimes provide more information than our results for $\kinvt$.

\begin{example}
Figure \ref{fig:pretzels} shows the first four members $K_1,K_2,K_3,K_4$ of an infinite family of Legendrian knots, where $K_n$ is a Legendrian representative the  pretzel knot $P(n,3,-3)$. 
It is straightforward to compute that $tb(K_n) = -(n+4)$, and that $r(K_n)$ is 0 for odd $n$ and $\pm 1$, depending on a choice of orientation, for even $n$. 

\label{ex:pretzels}
\begin{figure}[ht]
\centering
\includegraphics[width=12cm]{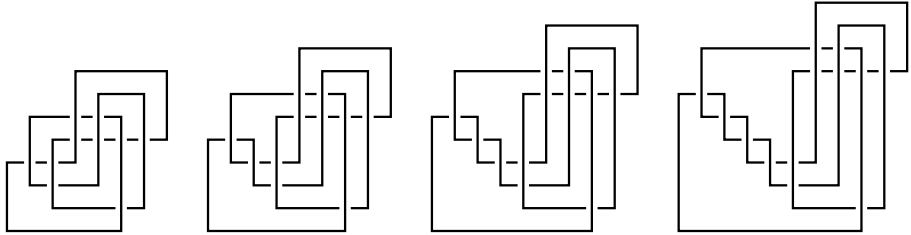}
\caption{A family  of Legendrian pretzel knots.}
\label{fig:pretzels}
\end{figure}

For each $n$, there is a Lagrangian concordance from a Legendrian unknot $U_n$  to $K_n$, as illustrated  in Figure \ref{fig:p4-surgery} for $n=4$. Note that $\kinvt(U_n) = 0$ by Theorem \ref{thm:natstab} since $U_n$ is a positive stabilization, so we cannot use Theorem \ref{thm:lagconcordance} to determine whether $\kinvt(K_n)$ vanishes. On the other hand, each $K_n$ admits an ungraded normal ruling,  shown in Figure \ref{fig:p4-ruling} for $n=4$, which implies that the Legendrian contact homology $\mathcal{A}(K_n)$ admits an ungraded augmentation \cite{fuchs}. Thus, $\mathcal{A}(K_n)$ is nonvanishing. Since $\mathcal {A}(U_n)$ is trivial but $\mathcal{A}(K_n)$ is not, it follows that the Lagrangian concordance from $U_n$ to $K_n$ is  nonreversible for each $n$.

\begin{figure}[ht]
\centering
\includegraphics[width=12.5cm]{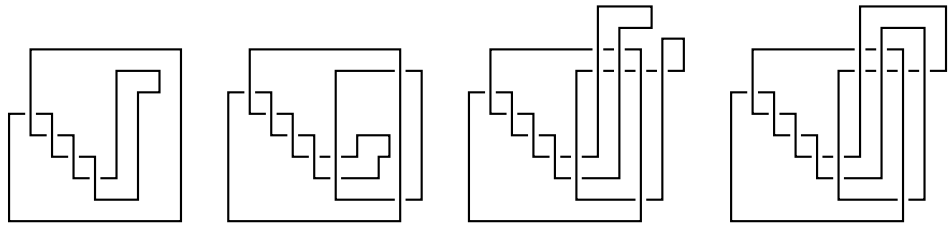}
\caption{A Lagrangian concordance from $U_4$ to $K_4$, built by inserting a Legendrian unknot filled with a Lagrangian cap, performing a Legendrian isotopy, and then using a Lagrangian saddle to surger two cusps together.}
\label{fig:p4-surgery}
\end{figure}

\begin{figure}[ht]
\centering
\includegraphics[width=6.5cm]{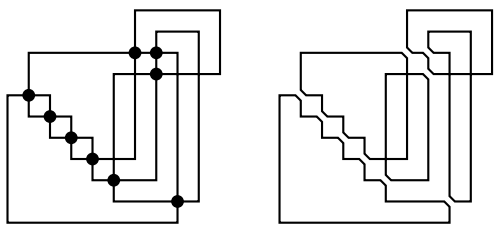}
\caption{An ungraded normal ruling of $K_4$.}
\label{fig:p4-ruling}
\end{figure}

\begin{remark} We expect that $\kinvt(K_n)=0$ for all $n$. Indeed, each $K_n$ has thin Khovanov homology \cite{starkston}. Hence, by a well-known conjecture (cf.\ \cite[Conjecture 1.3]{baldlev} and \cite{ras2}), it should have thin knot Floer homology as well (and does for $n$ odd, as computed in \cite{osz11}).  Ng, Ozsv{\'a}th and Thurston showed in \cite{not}  that the GRID invariant \cite{oszt} of an $HFK$-thin Legendrian knot with $tb-r < 2\tau-1$ vanishes. This inequality holds for $K_n$ since  $K_n$ is slice. In future work we intend to identify $\kinvt(K_n)$ with the GRID invariant, so it should follow that $\kinvt(K_n)=0$ as well.
\end{remark}

\end{example}

\bibliographystyle{hplain}
\bibliography{References}

\end{document}